\documentclass[]{article}
\usepackage{mathptmx}

\usepackage[a4paper,margin=1in]{geometry}
\usepackage{setspace}
\makeatletter
\renewcommand\paragraph{\@startsection{paragraph}{4}{\z@}%
  {1.5ex \@plus 1ex \@minus .2ex}
  {-1em}%
  {\normalfont\normalsize\itshape}}
\makeatother

\usepackage{hyperref}

\usepackage{xfrac}
\usepackage{stmaryrd}
\usepackage{amsmath}
\usepackage{amsthm}
\usepackage{amsfonts}
\usepackage{mathrsfs}
\usepackage[american]{babel}
\usepackage{caption}

\usepackage{multirow}

\usepackage{subcaption}
\usepackage{graphicx}
\usepackage{xcolor}  
\usepackage{float}
\usepackage{capt-of}

\usepackage{mathtools}
\usepackage{amssymb}
\usepackage[capitalise,nameinlink]{cleveref}
\usepackage{enumitem}
\usepackage{bm}
\usepackage{bbm}
\usepackage{algorithm}
\usepackage{algpseudocode}
\usepackage{comment}

\newcommand{\var}{\mathrm{Var}}
\newcommand{\bias}{\mathrm{Bias}}
\newcommand{\esp}{\mathbb{E}}
\DeclareMathOperator{\spn}{span}
\DeclareMathOperator{\ex}{e}

\newtheorem{theorem}{Theorem}[section]
\newtheorem{definition}{Definition}[section]
\newtheorem{proposition}{Proposition}[section]

\newtheorem{lemma}{Lemma}[section] 
 
\newtheorem{remark}{Remark}[section]

\crefname{equation}{Eq.}{Eqs.}
\Crefname{equation}{Eq.}{Eqs.}

\crefname{figure}{Fig.}{Figs.}
\Crefname{figure}{Fig.}{Figs.}

\newcommand{\Fab}[1]{\textcolor{black}{#1}}
\newcommand{\Fabrice}[1]{\textcolor{black}{#1}}

\author{F. Deluzet$^{\dagger}$, C. Guillet$^{\ddagger,*}$, and J. Narski$^{\dagger}$}

\title{A hierarchical sparse-grid particle method for the Vlasov--Poisson system}
\begin{document}

\maketitle

\renewcommand{\thefootnote}{\fnsymbol{footnote}}

\footnotetext[1]{Corresponding author. Email: \texttt{clement.guillet@inria.fr}.}
\footnotetext[2]{Univ Toulouse, INSA Toulouse, CNRS, IMT, Toulouse, France.}
\footnotetext[3]{Inria, Concace joint team between Airbus CR\&T, Cerfacs and Inria, Talence, France.}

\begin{abstract}
We introduce a hierarchical sparse-grid (HSG) particle method for the
numerical solution of the Vlasov--Poisson system. Sparse-grid PIC methods
have so far been formulated within finite-difference frameworks, most
notably through the sparse-grid combination technique (SGCT), which ties
them to tensor-product Cartesian grids and globally defined component
grids. This paper brings sparse-grid particle methods into the Galerkin
setting: the field equation is solved in variational form on a
hierarchical sparse-grid space spanned by B-splines of arbitrary degree,
and the traditional charge deposition step is replaced by a direct
Galerkin projection of the raw Monte Carlo density estimator onto this
space. Beyond preserving the mesh-complexity and noise-reduction benefits
of sparse grids, this reformulation substantially extends the versatility
of the approach, opening the way to spatial adaptivity and to
non-rectangular geometries, which are notoriously difficult to
accommodate within the SGCT framework. We carry out a probabilistic error
analysis decomposing the numerical error into a grid-based bias and a
statistical noise component. Under mixed-derivative regularity assumptions
on the particle distribution, the bias of the charge density in the
$\mathrm{L}^2$-norm is shown to scale as
$\mathcal{O}(h^{p+1}|\log h|^{d-1})$, where $p$ is the B-spline degree and
$d$ the spatial dimension, and the statistical error in the
$\mathrm{L}^1$-norm as $\mathcal{O}(|\log h|^{(d-1)/2}(Nh)^{-1/2})$,
matching the accuracy of high-order SGCT-PIC methods. Corresponding bounds
are derived for the electric field. The theoretical estimates are
validated on classical kinetic plasma benchmarks, including configurations
with limited regularity and strong anisotropies that are known to be
challenging for sparse-PIC approximations.
\end{abstract}

Keywords: Particle-in-cell; sparse grids; Galerkin method; plasma physics; electrostatic; Vlasov--Poisson

%
% ----------------------------------------------------
% 1. Introduction
% ----------------------------------------------------
\section{Introduction}
Particle methods, such as the well-known particle-in-cell (PIC) algorithm~\cite{hockney88,birdsall18}, are among the most efficient and
widely used numerical methods for the simulation of kinetic plasmas. The PIC algorithm couples a Lagrangian discretization of the Vlasov equation,
based on the integration of particle trajectories, with a mesh-based discretization of Poisson's equation for the computation of the self-consistent electrostatic field. This particle representation, however, comes at a fundamental cost: sampling the distribution function
with a finite number of particles inherently introduces statistical noise into mesh-defined quantities. This statistical error decreases slowly, with a rate proportional to the inverse square root of the mean number of particles per cell. 
In many practical simulations, statistical noise therefore becomes the dominant source of error, and a meaningful noise reduction requires a prohibitively large increase in the number of computational particles. 
Over the years, numerous noise-reduction strategies have been investigated, including variance-reduction techniques such as the $\delta f$ method~\cite{aydemir94,denton95,sydora99}, filtering in either the Fourier~\cite{birdsall18} or the wavelet~\cite{gassama07} domains, and micro--macro decompositions~\cite{crestetto18}. More recently, sparse-grid techniques have emerged as a promising alternative.

Sparse-grid methods aim to alleviate the curse of dimensionality~\cite{bellman61}, namely the exponential growth of the
computational cost with respect to the problem dimension $d$. Initially introduced for function interpolation~\cite{smolyak63}, they were later
extended to the numerical solution of partial differential equations (PDEs) in the seminal papers~\cite{griebel90,bungartz91}, which put
forward two distinct frameworks. In~\cite{bungartz91}, a hierarchical basis representation is introduced within a multi-resolution
tensor-product structure, allowing the truncation of the least significant basis-function contributions. In~\cite{griebel90}, the sparse-grid combination technique (SGCT) constructs an approximation by linearly combining partial solutions computed on a hierarchy of coarse
component grids. Denoting by $h$ the typical mesh step, both approaches reduce the computational complexity from $\mathcal{O}(h^{-d})$, for full
tensor-product grids, to $\mathcal{O}(h^{-1}|\log_2 h|^{d-1})$, while maintaining comparable accuracy under suitable mixed-derivative
regularity assumptions. Sparse-grid methods have since been applied to a wide range of PDEs; see, e.g.,~\cite{griebel98-2,griebel07-1} for the
combination technique,~\cite{bungartz98,griebel07} for the hierarchical basis approach, and~\cite{bungartz04} for a comprehensive survey.

The combination of sparse grids with particle methods was initiated in~\cite{ricketson17}, where the SGCT was incorporated into a PIC scheme.
Compared to the standard PIC (STD-PIC) method, which relies on a single uniformly fine grid, the SGCT-PIC solution is reconstructed by recombining approximations computed on component grids with much coarser cells, which simultaneously reduces the mesh complexity and the statistical noise. The SGCT-PIC method has been assessed on classical kinetic plasma benchmarks~\cite{ricketson17,deluzet22} and successfully applied to low-temperature (including collisional) plasma discharges and to drift instabilities in Hall thrusters~\cite{garrigues21,garrigues21-1,garrigues24,garrigues24-1}.
Several variants~\cite{muralikrishnan21,deluzet22} and a high-order extension~\cite{deluzet25} have been proposed, together with a priori error estimates. Originally developed for explicit time discretizations, the method has also been adapted to semi-implicit formulations~\cite{guillet24,guillet25}, and optimized parallel implementations have been developed for three-dimensional simulations on shared-memory CPU architectures~\cite{deluzet22-1} and GPUs~\cite{deluzet23}.

The SGCT-PIC method and all of its variants share the same structure: the field equations are discretized by finite differences on a collection of
coarse component grids, and the electric field is subsequently reconstructed at the particle positions through the combination
technique. While effective for smooth solutions on regular domains, this construction inherits intrinsic limitations from the SGCT itself. First,
sparse-grid methods may perform poorly for non-smooth solutions or strongly localized gradients, for which the underlying mixed-derivative
regularity assumptions break down; spatial adaptivity then becomes essential. Although adaptive refinement is naturally accommodated by hierarchical basis representations of sparse grids, following, e.g.,~\cite{griebel99,pfluger10,bokanowski13}, it is considerably harder to integrate into the SGCT~\cite{obersteiner21,obersteiner21-1}, owing to its reliance on globally defined component grids. Second, the SGCT-PIC
method is tied to tensor-product Cartesian grids, which restricts its applicability to rectangular geometries; extensions to complex geometries
are non-trivial in this setting, whereas the hierarchical basis offers natural avenues through transfinite interpolation~\cite{Dornseifer:1996aa} or B-spline-based mappings~\cite{guillet25-1}. These limitations motivate the development of a particle method built on the hierarchical basis representation of sparse grids, preserving the noise- and complexity-reduction benefits of the SGCT while offering substantially more flexibility.

The contributions of this paper are twofold. First, we introduce a new explicit-in-time particle method, referred to as the hierarchical sparse-grid (HSG-)PIC method, which brings sparse-grid particle methods, so far confined to finite-difference formulations, into the Galerkin
setting. Conceptually distinct from both STD- and SGCT-PIC methods, the HSG-PIC method abandons traditional cell-based charge deposition.
Instead, the charge density estimator is redefined as a direct Galerkin projection of the raw Monte Carlo estimator onto a hierarchical
sparse-grid spline space, and the Poisson equation is solved in variational form on the same space. This variational structure is
precisely what unlocks the flexibility discussed above: hierarchical increments provide natural local error indicators for adaptivity, and
B-spline spaces extend to mapped, non-rectangular geometries.

Second, we develop a  probabilistic error analysis of the HSG-PIC method. Owing to the statistical nature of particle schemes, the analysis relies on a  decomposition of the approximation error into two components. The bias, or grid-based error, which characterizes the accuracy with which the mean values of grid quantities (particle density, electric potential, electric field) are approximated and is governed by the mesh resolution. The statistical noise, is the second component of the error, it measures the deviation from this mean. Assuming sufficient mixed-derivative regularity of the particle distribution, we
prove that the grid-based component of the charge density error in the $\mathrm{L}^2$ norm scales as $\mathcal{O}(h^{p+1}|\log_2 h|^{d-1})$,
where $p$ denotes the degree of the hierarchical B-spline basis and $d$ the spatial dimension, while the statistical component scales in the
$\mathrm{L}^1$ norm as $\mathcal{O}(|\log_2 h|^{(d-1)/2}(Nh)^{-1/2})$. The resulting accuracy is comparable to that of the high-order SGCT-PIC
method. The analysis is complemented by bounds on the electric field error: the grid-based error in the $\mathrm{L}^2$ norm scales as
$\mathcal{O}(h)$ for linear splines ($p=1$), while the statistical error exhibits the same scaling as that of the charge density. These estimates
are validated through numerical experiments, which further suggest that the bound on the statistical error of the electric field could be
sharpened.

The remainder of the paper is organized as follows. \Cref{sec:1} introduces the necessary background on kinetic formulations and particle methods: the Vlasov--Poisson system, the classical PIC algorithm, and the SGCT-PIC method, together with existing error estimates. In \cref{sec:2}, the HSG-PIC method is introduced and error estimates for both the grid-based and statistical error components are derived. Numerical experiments are presented in \cref{sec:3} to confirm the theoretical results and assess the performance of the method. Conclusions and perspectives are drawn in \cref{sec:4}.

\medskip
Throughout the paper, the symbol $\lesssim$ denotes an inequality holding
up to a positive multiplicative constant independent of the discretization
parameters, and we write $\log := \log_2$. \Fabrice{This work strictly addresses the spatial discretization of the model. Consequently, to streamline the exposition, the explicit time dependence of time-dependent quantities is omitted where possible, or simplified to the bare minimum.}

\section{Preliminaries: continuous model, standard and sparse grid combination technique (SGCT) particle-in-cell (PIC) methods}
\label{sec:1}
\subsection{Continuous model and standard PIC method}\label{sec:1.1}
In this section, we briefly introduce the necessary background on the mathematical formulation of kinetic plasma modelling and their numerical resolution using particle methods.
\subsubsection{Vlasov--Poisson system}
We consider the Vlasov--Poisson system, which describes the evolution of charged particles interacting through a self-consistent electrostatic field. Collisional processes are omitted in this work. A constant external magnetic field is included, but it is assumed to remain unaltered by the plasma dynamics. The system is formulated in phase space, defined as the Cartesian product of the spatial and velocity domains,
\[
\Omega:= \Omega_{\bm{x}} \times \Omega_{\bm{v}}, \quad \text{with}~~ \Omega_{\bm{x}}:=\mathbb{T}^d = [0,L]^d/\mathbb{Z}^d ~~, ~~ \Omega_{\bm{v}}:=\mathbb{R}^d,
\]
where $d\in \mathbb{N}$ denotes the dimension of the problem with respect to both space and velocity variables.
Throughout this work, the spatial domain $\Omega_{\bm{x}}$, of characteristic length $L>0$ and alternatively denoted by $\Omega := \Omega_{\bm{x}}$ in the following, is assumed to be periodic and identified with the $d$-dimensional torus $\mathbb{T}^d$.

The time domain is denoted $\Omega_{t}$ yielding the following notations:
\[
 \Omega_{\bm{x}\bm{v}t} := \Omega_{\bm{x}\bm{v}} \times \Omega_t\,, \quad   \Omega_{\bm{x}t} := \Omega_{\bm{x}} \times \Omega_t\,, \quad \Omega_{t}:=\mathbb{R}_+.
\]

To simplify the exposition, the ions are assumed to be at rest, forming a neutralizing background. All variables are expressed in dimensionless form, with characteristic scales being the Debye length $\lambda_D$ and the plasma period $\omega_p^{-1}$, given by
\[
	\lambda_D = \sqrt{\frac{\varepsilon_0 T_e}{q_e n_0}}, \qquad 
	\omega_p^{-1} =\left(\sqrt{\frac{q_e n_0}{m_e \varepsilon_0}}\right)^{-1}.
\]
In this normalization setting, the electron mass $m_e$, characteristic temperature $T_e$, charge $q_e$,  typical electron density $n_0$ and vacuum permittivity $\varepsilon_0$ are set to unity.
  Consequently, we only consider electrons through the dimensionless, collisionless Vlasov--Poisson system, which reads:
\[\begin{aligned}
\frac{\partial f}{\partial t}
+ \bm{v} \cdot \bm{\nabla}_{\bm{x}} f
- \left( \bm{v} \times \bm{B} + \bm{\nabla}_{\bm{x}} \Phi \right) \cdot \bm{\nabla}_{\bm{v}} f = 0,
\\
-\Delta \Phi = 1 - \rho, \qquad \rho = \int_{\Omega_{\bm{v}}} fd \bm{v}.
\end{aligned}
\]
where $\bm{\nabla}_*$ denotes the gradient operator with respect to the variable $*$, and we use the shorthand notation $\bm{\nabla} := \bm{\nabla}_{\bm{x}}$. The function $f : \Omega_{\bm{x}\bm{v}t} \to \mathbb{R}_+$ denotes the phase-space distribution function of the electron species, and $\rho: \Omega_{\bm{x}t}   \to \mathbb{R}_+$ is the electron charge density. 

%and satisfies the unit mass constraint
%\begin{align}
%\label{eq:3}
%\int_{\Omega} \rho(\bm{x},t)\, d\bm{x} = 1,
%\end{align}
%for all $t \in \Omega_t$.
\subsubsection{Overview of PIC methods}
\label{sec:1.1.2}
We briefly outline the main principles of the particle-in-cell (PIC) method. For a comprehensive presentation, we refer the reader to the classical books~\cite{hockney88,birdsall18} and the references therein.
\paragraph{Particle approximation of the distribution function}  The distribution function is approximated by a collection of $N$ macro-particles, that sample the phase space. Denoting $\delta$ as the Dirac delta distribution function, the particle approximation $f_N$ of the distribution function is defined as:
\[
f_N(\bm{x},\bm{v},t) = \sum_{s=1}^N w_s\delta\big(\bm{x}-\bm{x}_s(t)\big) \delta\big(\bm{v}-\bm{v}_s(t)\big)\,,
\]
where $w_s$ represents the particle weight, which accounts for the phase space volume carried by the $s$-th particle. Under the present dimensionless framework, we assume uniform particle weights $w_s = \int_{\Omega} \rho(x,0) /N$. The position and velocity of the $s$-th particle, denoted by $(\bm{x}_s(t), \bm{v}_s(t))$ for $s = 1,\ldots,N$, are time-dependent quantities such that $\bm{x}_s : \Omega_t \to \Omega$ and $\bm{v}_s : \Omega_t \to \Omega_{\bm{v}}$. 
\paragraph{Evolution of particles} The particles evolve according to Newton's equations of motion, namely
\begin{align}
\label{eq:1}
\frac{d  \bm{x}_s(t)}{d t} = \bm{v}_s(t), \qquad \frac{d  \bm{v}_s(t)}{d t} = \Big(\bm{v}_s(t)\times \bm{B}\big(\bm{x}_s(t)\big) +\bm{E}\big(\bm{x}_s(t),t\big)\Big),
\end{align}
where the fields $\bm{E}$ and $\bm{B}$ are evaluated at the particle position $\bm{x}_s(t)$.
These equations are typically discretized in time using explicit schemes such as the leapfrog method or Runge--Kutta integrators. At each time step, the electric field is computed on a spatial mesh from the charge density, which has to be computed first from the particle distribution. 
\paragraph{Charge density approximation onto the mesh} By definition, the macroscopic density reads $\rho(\bm{x},t) = \int_{\Omega_{\bm{v}}} f(\bm{x},\bm{v},t) \mathrm{d}\bm{v}$. To derive an estimator for the particle density, we integrate the distribution function $f_N$ over the velocity space.  Replacing $f$ with its discrete counterpart $f_N$ and exploiting the shifting property of the Dirac delta distribution with respect to $\bm{v}$, we obtain the raw Monte Carlo density estimator:
\begin{equation}\label{eq:rhoN}
\rho_{N}(\bm{x},t) = \sum_{s=1}^N w_s \delta\big(\bm{x}-\bm{x}_s(t)\big)\,.
\end{equation}
This estimator cannot be directly used on a discrete mesh due to its singular nature. To obtain a smoothed density field on the grid, the classical PIC approach consists in convolving $\rho_N$ with a localization kernel, also referred to as a shape function. These functions, denoted as $W_h$, are compactly supported on a scale proportional to the mesh size $h$, and satisfy the partition of unity property to ensure charge conservation. The regularized grid-based density estimator, $\rho_h(\bm{x},t)$, is then defined as:
\begin{equation}\label{eq:rhoN}
\rho_{h,N}(\bm{x},t) = \sum_{s=1}^N w_s W_h\big(\bm{x}-\bm{x}_s(t)\big)\,.
\end{equation}
Typical choices for $W_h$ \cite{hockney88,birdsall18} are tensor products of univariate functions, including piecewise constant functions (Nearest-Grid-Point), piecewise linear functions (Cloud-In-Cell), or higher-order \Fabrice{piecewise polynomials (quadratic B-splines for the Triangular-Shape-Cloud)}.

\paragraph{Resolution of Poisson equation}
The regularized Monte Carlo density estimator $\rho_h$ serves as the right-hand side of a Poisson equation, which is solved on the mesh $\Omega_h$ using a mesh-based method such as finite difference, fast Fourier transform or finite element methods. The electric potential solution is then differentiated on the grid to obtain the electric field.

\paragraph{Interpolation of the field at particle positions}
The electric field, computed on the grid, is interpolated at the particles' position. To maintain self-consistency, the classical PIC method employs the same shape function for both projecting the particle charges onto the grid and interpolating the electromagnetic fields back to the particle positions.

The sequence of these operations, namely the computation of the electric field on the grid, the interpolation at the particle positions and the evolution of particles, constitutes one iteration, or one time step, of the PIC algorithm. In the following, we refer to standard particle-in-cell (STD-PIC) algorithm this sequence of operations when a single full-grid with identical mesh resolution $h$ in all spatial directions is used.

\subsubsection{Stochastic error analysis framework}
\label{sec:2.3.1}

To rigorously quantify the numerical errors inherent to PIC methods, we adopt a probabilistic description of the particle system following~\cite{bottino15,deluzet22}. Within this framework, we introduce a time-dependent random variable $\bm{X}(t)$ representing the spatial position of a macro-particle at a given time $t$. The probability density function (PDF) associated with $\bm{X}(t)$, denoted by $p(\bm{x},t)$, is defined by normalizing the exact continuous macroscopic density $\rho(\bm{x},t)$ by the total mass of the system $\mathcal{M} := \int_{\Omega} \rho(\bm{\xi},t)\mathrm{d}\bm{\xi}$ yielding $ p(\bm{x},t) = {\rho(\bm{x},t)}/{\mathcal{M}}$.

In the PIC context, the numerical particle positions $\{\bm{x}_s(t)\}_{s=1}^N$ are modeled as $N$ independent and identically distributed (i.i.d.) random realizations drawn from the distribution of $\bm{X}(t)$ at each instant $t$. 
This probabilistic interpretation allows us to evaluate the accuracy of the density estimators by recalling the standard statistical metrics of bias and variance.

\begin{definition}
Let $u_*$ be a statistical estimator (such as the raw estimator $\rho_N$ or the regularized estimator $\rho_{h,N}$) of a deterministic quantity $u$. The \emph{bias} of the estimator, evaluated through the expectation operator $\mathbb{E}[\cdot]$ acting with respect to the underlying PDF $p(\bm{x},t)$, is defined by
\[
\text{Bias}[u_*] := \mathbb{E}[u_*] - u\,,
\]
and its \emph{variance} is defined by
\[
\text{Var}[u_*] := \mathbb{E}\left[(u_*)^2\right] - \big(\mathbb{E}[u_*]\big)^2.
\]
\end{definition}

These two metrics map directly to the two fundamental sources of numerical inaccuracy in PIC codes: the grid-based smoothing error (associated with the bias) and the stochastic noise (associated with the variance)~\cite{deluzet22}. Indeed, while the raw Monte Carlo estimator is fundamentally unbiased, the smoothing procedure used to define $\rho_{h,N}$ introduces a deterministic bias as a trade-off for successfully bounding the statistical noise to the rate $\mathcal{O}(1/\sqrt{Nh^d})$. These properties are summarized in the following lemma.
\begin{lemma}\label{lemma:Density:Estimator:Expected:Value} The raw and regularized Monte Carlo density estimators satisfy the following estimates
  \begin{align*}
  \mathbb{E}[\rho_N(\bm{x},t)]  &= \rho(\bm{x},t) \,, \\
  \mathbb{E}[\rho_{h,N}(\bm{x},t)]&=(W_h * \rho)(\bm{x},t)\,. 
  \end{align*}
\end{lemma}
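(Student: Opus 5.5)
The plan is to reduce both identities to linearity of expectation together with the i.i.d. model of \cref{sec:2.3.1}. Each position $\bm{x}_s(t)$ is a realization of $\bm{X}(t)$ with density $p(\cdot,t)=\rho(\cdot,t)/\mathcal{M}$. The weights are deterministic and equal, $w_s=\mathcal{M}/N$; this uses the fact that the total mass $\int_\Omega\rho(\bm{x},0)\,d\bm{x}$ is conserved by the Vlasov flow, so it coincides with $\mathcal{M}$ at every time $t$. Hence, for either estimator written as $\sum_s w_s K(\bm{x}-\bm{x}_s)$, linearity gives
\[
\mathbb{E}\Big[\sum_{s=1}^N w_s K(\bm{x}-\bm{x}_s)\Big] = N\cdot\frac{\mathcal{M}}{N}\,\mathbb{E}\big[K(\bm{x}-\bm{X})\big] = \mathcal{M}\int_{\Omega} K(\bm{x}-\bm{\xi})\,p(\bm{\xi},t)\,d\bm{\xi} = \int_\Omega K(\bm{x}-\bm{\xi})\rho(\bm{\xi},t)\,d\bm{\xi}.
\]
All remaining work consists of justifying this computation for the two choices of $K$.

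For the regularized estimator I take $K=W_h$. This is a bounded, compactly supported function, periodized on $\mathbb{T}^d$, so $\mathbb{E}[W_h(\bm{x}-\bm{X})]$ is an ordinary finite integral. The last expression is then by definition $(W_h*\rho)(\bm{x},t)$, which proves the second identity.

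For the raw estimator, $K=\delta$ is a distribution, so pointwise expectation has no literal meaning. I will interpret the identity weakly: for any test function $\varphi\in C^\infty(\mathbb{T}^d)$, we have $\langle\rho_N,\varphi\rangle=\sum_s w_s\varphi(\bm{x}_s)$, and the same computation gives $\mathbb{E}\langle\rho_N,\varphi\rangle=\int\varphi\rho$, that is, $\mathbb{E}[\rho_N]=\rho$ in $\mathcal{D}'$. There is a consistency check: if one wishes, the first identity is the formal limit of the second as $W_h\to\delta$, using the continuity of $\rho$.

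The only delicate point is this distributional interpretation of $\mathbb{E}[\rho_N(\bm{x},t)]$; stating it explicitly resolves it. Everything else is direct.
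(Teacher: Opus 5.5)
Your proposal is correct and follows essentially the same route as the paper. Both use linearity of expectation over i.i.d. particles with uniform weights $w_s=\mathcal{M}/N$, and then the transfer formula $\mathbb{E}[K(\bm{x}-\bm{X})]=\int_\Omega K(\bm{x}-\bm{\xi})\,p(\bm{\xi})\,d\bm{\xi}$ with $K=\delta$ and $K=W_h$. Your test-function reading of $\mathbb{E}[\rho_N]$ and your mass-conservation remark justifying $w_s=\mathcal{M}/N$ at time $t$ simply make explicit what the paper invokes as the ``transfer theorem in the distributional sense'' and the ``uniform weight definition.''
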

This systematic error, $\text{Bias}[\rho_{h,N}(\bm{x},t) ]=(W_h * \rho)(\bm{x},t) - \rho(\bm{x},t)$, while $\text{Bias}[\rho_{N}(\bm{x},t) ]=0$ is a direct consequence of the convolution with the shape function $W_h$.

\subsection{SGCT-PIC method}
Recently, the SGCT approach has been incorporated into PIC algorithms for solving the Vlasov--Poisson~\cite{ricketson17} and Vlasov--Ampère~\cite{guillet24} equations, with the explicit goal of mitigating computational costs. The global time-stepping loop follows the same core steps as the standard PIC method. However, the single uniform mesh is replaced by a hierarchy of anisotropic component grids, onto which approximations of the density are deposited and the corresponding electric fields are computed. The electric field approximations obtained across these distinct component grids are then combined to define a unique discrete electric field. This recombined field is subsequently interpolated at the macro-particle positions to integrate the equations of motion. A comprehensive outline of these methods, along with their associated error estimates, is detailed in the following sections.
\subsubsection{Outline}
\label{sec:1.2.1}
We briefly present the classical SGCT-PIC method here and refer to~\cite{deluzet22,deluzet25} for further details and a discussion of its variants.

\paragraph{Component grids}
The cornerstone of the method is the definition of a collection of component grids, each associated with a multi-index level $\bm{\ell} \in \mathbb{N}^d$ that parametrizes the spatial resolution along each coordinate direction. \Fabrice{For a given level $\bm{\ell}$, the multi-index node set $I_{h_{\bm{\ell}}}$ is defined as the Cartesian product of the univariate index sets:
\begin{equation}
  I_{h_{\bm{\ell}}} := \prod_{i=1}^d I_{h_{\ell_i}} \quad \text{with} \quad I_{h_{\ell_i}}:=\Big\{0,\ldots,2^{\ell_i}-1\Big\},
\end{equation}
where the directional mesh size is given by $h_{\ell_i}=2^{-\ell_i}$. The corresponding discrete grid $\Omega_{h_{\bm{\ell}}}$ covering the domain $\Omega$ is then defined by
\begin{equation}
\Omega_{h_{\bm{\ell}}}:= \Big\{ \bm{x}_{\bm{j}}\in \Omega \;\Big|\; \bm{x}_{\bm{j}} = (j_1 h_{\ell_1}, \dots, j_d h_{\ell_d}), \;\; \bm{j} \in I_{h_{\bm{\ell}}} \Big\}.
\end{equation}}

The component grids are partitioned into \Fabrice{$\mathcal{D}$} distinct family levels, denoted as $l$ with $l \in \{0, \dots,  \Fabrice{\mathcal{D}}-1\}$. The number of family levels equals the dimension of the problem $\mathcal{D}=d$. Let $h$ denote the mesh size parameter corresponding to the finest spatial resolution in each direction, and defined from an integer $n \in \mathbb{N}_0$ such that $h = 2^{-n}$. The full set of admissible levels $\mathscr{L}_h$ is decomposed into a disjoint union of subsets $\mathscr{L}_{h,l}$ related to the family level $l$:
\begin{align}
\label{eq:8}
\mathscr{L}_h := \bigcup_{0 \leq l \leq  \Fabrice{\mathcal{D}}-1} \mathscr{L}_{h,l}, \quad \text{with }
\mathscr{L}_{h,l} := \left\{ \bm{\ell} \in \mathbb{N}^d \;\middle|\; 
|\bm{\ell}|_1 = n + (d-1) - l, \ \ell_j \geq 1 \right\},
\end{align}
where the $\ell^1$-norm of the multi-index, $|\bm{\ell}|_1 := \sum_{i=1}^d \ell_i$, defines the total resolution level of a grid. 

A combination coefficient is assigned to any of the component grids in a family level $l$. Consequently, all component grids belonging to the same family share the same weight, defined as a function of the level $l$ by:
\begin{equation}\label{eq:def:comb:coeff}
      c_{\bm{\ell}} = c_{l} := (-1)^l \binom{d-1}{l}, \quad \text{for all } \bm{\ell} \in \mathscr{L}_{h,l}.
\end{equation}

\paragraph{Charge deposition and field equations}
For each component grid, an approximation of the electric field $E_{h,N}$ is computed by performing the two following operations successively:
\begin{enumerate}
  \item The charge density is projected onto any of the component grids $\Omega_{\Fabrice{h_{\bm{\ell}}}}$ ($\bm{\ell} \in \mathscr{L}_{h,l}$) using shape functions $W^p_{h_{\bm{\ell}}}$ constructed as tensor products of one-dimensional shape functions $\mathcal{W}_{h_{\ell_i}}^p$: 
 \[
\rho_{\Fabrice{h_{\bm{\ell}},N}}(\bm{x},t) = \sum_{s=1}^N w_s W^p_{h_{\bm{\ell}}}\big(\bm{x}-\bm{x}_s(t)\big), \quad \text{where}~~ W^p_{h_{\bm{\ell}}}(\bm{x}) = \prod_{i=1}^{d} \mathcal{W}^p_{h_{\ell_i}}(x_i).
  \] 
  The symmetry of the functions $\mathcal{W}_{h_{\ell_i}}^p$ permits the definition of a density estimator with a bias proportional to $h_{\ell_i}^2$. These functions may be specified to increase this precision to $h_{\ell_i}^p$ \cite{deluzet25} and are therefore indexed by $p$ to state this order of approximation.
  \item The density estimator $\rho_{\Fabrice{h_{\bm{\ell}},N}}$ is evaluated at the component grid nodes at time $t^\kappa$, defining the nodal vector \Fabrice{$\hat{\rho}^\kappa_{h_{\bm{\ell}},N}$ whose components are given by
  \begin{equation}
    \left(\hat{\rho}^\kappa_{h_{\bm{\ell}},N}\right)_{\bm{j}} = {\rho}_{h_{\bm{\ell}},N}(\bm{x}_{\bm{j}},t^\kappa) \,, \qquad  \forall \bm{j} \in I_{h_{\bm{\ell}}}.
  \end{equation}
  The Poisson equation is discretized on the component grids with $\hat{\rho}^\kappa_{h_{\bm{\ell}},N}$ as the source term. Solving this system yields the approximation of the electric potential, from which the nodal values of the approximated electric field, denoted $\hat{\bm{E}}^\kappa_{h_{\bm{\ell}},N}$ are derived. Finally, a continuous electric field interpolant ${\bm{E}}^\kappa_{h_{\bm{\ell}},N}$  is reconstructed for any level $\bm{\ell} \in \mathscr{L}_{h,l}$ by combining these nodal values with the shape functions ${W}_{h_{\bm{\ell}}}^p$:
  \begin{equation}
    {\bm{E}}^\kappa_{h_{\bm{\ell}},N}(x) := \sum_{\bm{j} \in I_{h_{\bm{\ell}}}}\left(\hat{\bm{E}}^\kappa_{h_{\bm{\ell}},N}\right)_{\bm{j}} W^p_{h_{\bm{\ell}}}\big(\bm{x}-\bm{x}_{\bm{j}}\big)\,.
  \end{equation}}
\end{enumerate}
 
\paragraph{Combination}
An accurate approximation of the electric field is obtained by linearly recombining the component fields $\bm{E}^{\Fabrice{\kappa}}_{h_{\bm{\ell}},N}$ using the combination coefficients $c_{\bm{\ell}}$ defined in \cref{eq:def:comb:coeff}:
\Fabrice{\begin{equation}\label{eq:recombination}
    \bm{E}^\kappa_{h,N}(\bm{x})
    := \sum_{\bm{\ell}\in \mathscr{L}_h}
    c_{\bm{\ell}}
    \bm{E}^\kappa_{h_{\bm{\ell}},N}(\bm{x}).
\end{equation}}
This recombined continuous field is subsequently evaluated at each particle's position \Fabrice{$\bm{x}_s(t^\kappa)$} to update the macro-particle velocities.

\subsubsection{Error estimates}
Grid-based and statistical error estimates for the SGCT-PIC method were first derived in~\cite{deluzet22} for linear shape functions and later extended to high-order shape functions in~\cite{deluzet25}. The estimates rely on Taylor expansions and therefore require sufficient regularity of the exact solution. More precisely, the analysis is carried out in the mixed-regularity space
\[
\mathrm{X}^q_{\mathrm{mix}}(\Omega):= \left\{f: \Omega \rightarrow \mathbb{R} \;\middle|\; D^{\bm{\alpha}}f \in \mathcal{C}^0(\Omega), ~~ |\bm{\alpha}|_\infty\leq q \right\}, \quad ~\text{with}~  \quad |\bm{\alpha}|_\infty := \max_{1 \le i \le d} \alpha_i,
\]
where $\mathcal{C}^0(\Omega)$ denotes the space of continuous functions on $\Omega$.

Let $\rho_{h,N}$ be the recombined charge density approximation obtained from ~\cref{eq:recombination} using the $p$-th order SGCT-PIC method. The following proposition summarizes the results of~\cite[Proposition 3.6]{deluzet22} and~\cite[Theorem 3.2]{deluzet25}:
\begin{proposition}
\label{prop:1}
Let $f \in \mathrm{L}^1\!\left(\mathrm{X}^{q}_{\mathrm{mix}}(\Omega); \Omega_{\bm{v}}\right)$, with $0 \leq p \leq q-1$. Then the following estimates hold:
\begin{align*}
\| \bias[\rho_{h,N}] \|_{\mathrm{L}^\infty(\Omega)}
&\approx
h^{q} |\log h|^{d-1}
\| D^{\bm{\alpha}} \rho \|_{\mathrm{L}^\infty(\Omega)}, \quad \text{where}~ \bm{\alpha}=(q,\ldots,q),\\
\big\| \var[\rho_{h,N}]^{\frac{1}{2}} \big\|_{\mathrm{L}^\infty(\Omega)}
&\approx
|\log h|^{d-1}\left( \frac{1}{N h} \right)^{\frac{1}{2}}
\| \rho \|_{\mathrm{L}^\infty(\Omega)}.
\end{align*}
\end{proposition}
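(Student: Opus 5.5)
The plan is to treat bias and variance separately, and to exploit the fact that both are linear in the single-particle shape functions. Since $\mathbb{E}$ commutes with the finite recombination sum, Lemma~\ref{lemma:Density:Estimator:Expected:Value} applied on each component grid gives
\begin{equation*}
\mathbb{E}[\rho_{h,N}]=\sum_{\bm{\ell}\in\mathscr{L}_h} c_{\bm{\ell}}\, W^p_{h_{\bm{\ell}}}*\rho,
\qquad
\bias[\rho_{h,N}]=\Big(\sum_{\bm{\ell}\in\mathscr{L}_h} c_{\bm{\ell}}\, W^p_{h_{\bm{\ell}}}*\rho\Big)-\rho .
\end{equation*}
This uses $\sum_{\bm{\ell}} c_{\bm{\ell}}=1$, which follows from counting $\#\mathscr{L}_{h,l}$ and the alternating binomial sum. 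So the bias is a purely deterministic sparse-grid combination error for the family of one-dimensional smoothing operators $S_k:=\mathcal{W}^p_{h_k}*\cdot$.

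For the bias I will use the classical error-splitting argument of Griebel--Schneider--Zenger. In one dimension, a Taylor expansion combined with the moment conditions of $\mathcal{W}^p_{h_k}$ (symmetry and the vanishing of moments up to order $q-1$ built into the order-$p$ kernels) gives
\begin{equation*}
S_k u-u=h_k^{q}\,C(x;h_k),
\qquad |C(x;h_k)|\lesssim \|\partial^q u\|_{\mathrm{L}^\infty}.
\end{equation*}
Applying this direction by direction to the tensor-product kernel produces the expansion
\begin{equation*}
W^p_{h_{\bm{\ell}}}*\rho-\rho=\sum_{\emptyset\neq J\subseteq\{1,\dots,d\}}\ \prod_{j\in J} h_{\ell_j}^{q}\; C_J\big(\bm{x};(h_{\ell_j})_{j\in J}\big),
\end{equation*}
where each $C_J$ is bounded by the mixed derivative $D^{\bm{\alpha}_J}\rho$ with $|\bm{\alpha}_J|_\infty\le q$. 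Inserting this into the combination formula, the usual telescoping identity for the coefficients $c_l$ cancels every term with $|J|<d$ up to order $h^q$. The surviving full-tensor term $J=\{1,\dots,d\}$ contributes
\begin{equation*}
\sum_l |c_l|\,\#\mathscr{L}_{h,l}\; 2^{-q(n+d-1-l)}\lesssim h^q\,|\log h|^{d-1},
\end{equation*}
because $\#\mathscr{L}_{h,l}\sim n^{d-1}$. This yields the first estimate. I expect this cancellation bookkeeping, namely checking that the non-full terms cancel exactly rather than just being small, to be the main obstacle, since the $C_J$ depend on $h_{\ell_j}$; I would handle it by induction on $d$ as in the SGCT literature.

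For the variance I will write $\rho_{h,N}=\sum_s w_s K_h(\bm{x}-\bm{x}_s)$ with $K_h:=\sum_{\bm{\ell}} c_{\bm{\ell}} W^p_{h_{\bm{\ell}}}$. Independence of the particles then gives
\begin{equation*}
\var[\rho_{h,N}]\le \frac{\mathcal{M}}{N}\,\big(K_h^2*\rho\big)\le \frac{\mathcal{M}}{N}\,\|\rho\|_{\mathrm{L}^\infty}\,\|K_h\|_{\mathrm{L}^2}^2 .
\end{equation*}
By the triangle inequality and Cauchy--Schwarz,
\begin{equation*}
\|K_h\|_{\mathrm{L}^2}^2\lesssim \#\mathscr{L}_h\sum_{\bm{\ell}}\|W^p_{h_{\bm{\ell}}}\|_{\mathrm{L}^2}^2 ,
\qquad
\|W^p_{h_{\bm{\ell}}}\|_{\mathrm{L}^2}^2\sim 2^{|\bm{\ell}|_1}\sim h^{-1}.
\end{equation*}
Both $\#\mathscr{L}_h$ and the number of summands are of order $|\log h|^{d-1}$. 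Hence $\|K_h\|_{\mathrm{L}^2}^2\lesssim |\log h|^{2(d-1)}h^{-1}$, and taking the square root gives the stated $|\log h|^{d-1}(Nh)^{-1/2}$ bound. The constant $\mathcal{M}$ is absorbed into $\|\rho\|$ via the normalization of the weights.
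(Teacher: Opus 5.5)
Your proposal is correct (reading $\approx$ as an upper bound, which is the only sensible reading) and follows the Taylor-expansion route of the analyses the paper only cites, \cite[Proposition~3.6]{deluzet22} and \cite[Theorem~3.2]{deluzet25}. That route is a direction-by-direction error splitting fed into the combination-technique cancellation for the bias, and a grid-by-grid bound on the correlated component estimators for the noise; your Cauchy--Schwarz step is exactly what yields the stated $|\log h|^{d-1}$, and bounding the cross terms by $\int |W^p_{h_{\bm{\ell}}}|\,|W^p_{h_{\bm{\ell}'}}|\lesssim\prod_i 2^{\min(\ell_i,\ell'_i)}$ would even give $|\log h|^{(d-1)/2}$.

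Two minor corrections. First, the terms with $\emptyset\neq J\subsetneq\{1,\dots,d\}$ do not cancel exactly: only their interior part cancels, leaving boundary residuals of size $h^{q}|\log h|^{|J|-1}$ that are dominated by the full-tensor term. Second, if the component densities are sampled at the nodes and reconstructed with the shape functions, as the fields are in \cref{eq:recombination}, you should replace $S_k$ by its composition with the one-dimensional reconstruction operator. That composition admits the same expansion $h_k^{q}C(x;h_k)$, so nothing else in your argument changes.
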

Similar estimates are obtained for the standard PIC method, namely:
\[
\| \bias[\rho_{h,N}] \|_{\mathrm{L}^\infty(\Omega)} = \mathcal{O}(h^{p+1}), \quad \big\| \var[\rho_{h,N}]^{\frac{1}{2}} \big\|_{\mathrm{L}^\infty(\Omega)}=\mathcal{O}((Nh^{-d})^{\frac{1}{2}}).
\]
For high-dimensional problems (where $d>2$), the SGCT-PIC approximation provides a substantial reduction in computational cost compared to standard PIC methods. This is achieved through a better mitigation of the statistical noise, which depends only very weakly on the dimensionality of the problem for the SGCT-PIC method ($|\log h|^{d-1}( {N h})^{-1/2}$), in contrast to standard PIC methods ($(N\Fabrice{h^d)^{-1/{2}}}$).

\medskip
\noindent
Nonetheless, some limitations of the method can be identified from these error estimates: i) the error analysis relies heavily on mixed-derivative regularity, a requirement that restricts the method's accuracy for solutions exhibiting limited smoothness or sharp, highly anisotropic gradients; ii) sparse-grid methods usually benefit from spatially adaptive mesh refinement when such regularity assumptions are violated, which is difficult to accomodate within the SGCT framework. 

These observations motivate the development of an alternative method that relaxes the regularity assumptions while retaining accuracy and efficiency, naturally calling for the introduction of spatially adaptive mesh refinement strategies. Such an approach is introduced and analyzed in the following section.

\section{Hierarchical sparse-grid (HSG) particle method}
\label{sec:2}
We introduce the hierarchical sparse-grid (HSG) particle method, which relies on a hierarchical variational formulation of the field equations as well as a reconceptualized density estimator tailored to the mathematical structure of hierarchical sparse grids. Specifically, the HSG approach is characterized by two core components that set it apart from traditional STD- and SGCT-PIC algorithms:  i) a Galerkin projection of the raw Monte Carlo estimator of the charge density, as opposed to regularization performed through convolution with shape functions; ii) the solution of a variational formulation of the Poisson equation using a Galerkin method. 

The key ingredient relies on the construction of a suitable finite-dimensional approximation space. Various choices for this approximation space, denoted as $V_h(\Omega)$, are presented in \cref{sec:2.2}. The density estimator along with the variational formulation of the field equations are then introduced in \cref{sec:2.3}. Finally, theoretical estimates of the bias and variance associated with the HSG-PIC method are stated in \cref{sec:2.4}.

\paragraph{Notations} We denote by $\mathrm{H}^{q}(\Omega)$ the isotropic $\mathrm{L}^2$ Sobolev space of regularity $q \in \mathbb{N}_0$. We introduce the Sobolev space of dominating mixed smoothness defined by
\[
\mathrm{H}_{\mathrm{mix}}^q(\Omega) := \left\{f: \Omega \rightarrow \mathbb{R} \;\middle|\; D^{\bm{\alpha}}f \in \mathrm{L}^2(\Omega), ~~ |\bm{\alpha}|_\infty\leq q \right\}.
\]
In the above definition, the derivative operator defined for multi-indices $\bm{\alpha} = (\alpha_1,\ldots,\alpha_d) \in \mathbb{N}^d$ is given by
\[
D^{\bm{\alpha}} f := \frac{\partial^{|\bm{\alpha}|_1} f}{\partial x_1^{\alpha_1} \cdots \partial x_d^{\alpha_d}} \qquad  \text{ where}~~ |\bm{\alpha}|_1 = \sum_{i=1}^d \alpha_i \, ~~ \text{and}~~ |\bm{\alpha}|_\infty = \max_{1\leq i \leq d} \alpha_i, ~~ \text{with}~~  D^{\bm{0}} f := f \,.
\]  
The corresponding semi-norm and norm are defined by  
\[
|f|_{\mathrm{H}_{\mathrm{mix}}^q(\Omega)} := \Bigg( \sum_{|\bm{\alpha}|_\infty = q} \| D^{\bm{\alpha}} f \|_{\mathrm{L}^2(\Omega)}^2 \Bigg)^{1/2}, 
\quad
\|f\|_{\mathrm{H}_{\mathrm{mix}}^q(\Omega)} := \Bigg( \sum_{m=0}^q |f|_{\mathrm{H}_{\mathrm{mix}}^m(\Omega)}^2 \Bigg)^{1/2}.
\]
%
%Given a spatial mesh $\Omega_h$ of the domain $\Omega$ and an associated finite-dimensional approximation subspace $V_h(\Omega)$, which will be introduced in the subsequent sections, one iteration in time of the HSG-PIC algorithm consists of the following steps:
%\begin{enumerate}[label=\roman*)]
%\item The charge density is approximated by a \cor{Galerkin} projection of the Monte-Carlo estimator $\rho_N$ onto the subspace $V_h(\Omega)$. 
%\item A variational formulation of the Poisson equation, where the source term is given by the charge density approximation computed at step i), is discretized by a Galerkin method considering the subspace $V_{h}(\Omega)$.
%\item The electric field, which is the gradient of the electric potential computed at step ii), is evaluated at the particle positions. 
%\item The particles are advanced in time according to \cref{eq:1}.
%\end{enumerate}

\subsection{Hierarchical spline approximation spaces}
\label{sec:2.2}
The central ingredient of the proposed method is the construction of a suitable finite dimensional subspace $V_h(\Omega)$, which shall approximate the Hilbert space $\mathrm{H}^q_{\mathrm{mix}}(\Omega)$, with $q > {1/2}$\footnote{This condition is necessary for the Galerkin projection of the Monte Carlo estimator, defined as a sum of delta Dirac distributions $\delta\in\mathrm{H}^{-j}_{\mathrm{mix}}(\Omega)$ for $j>1/2$, to be well-defined.}. 

\subsubsection{Space construction via truncated tensor products}
\label{sec:2.2.1}
The approximation space is constructed from the following methodology: i) a univariate space of high degree $p\in \mathbb{N}$ and regularity $\mathcal{C}^{p-1}$ B-spline functions is defined; ii) it is extended by tensor products to $d$-dimensional spaces, with a distinct mesh resolution in each dimension; iii) a hierarchical basis formulation is adopted, allowing the spaces to be expressed as a direct sum of hierarchical subspaces of increasingly fine discretizations; iv) a sparse-grid-based rule is applied to select the relevant hierarchical subspaces, thereby defining an approximation space of significantly reduced dimension compared to a full grid, while preserving its essential approximation properties. Each of these components is detailed in the following.

\paragraph{Univariate B-splines approximation space}
We recall the notation $h$ of the dyadic mesh size, i.e., there exists an integer $n \in \mathbb{N}_0$ such that $h = 2^{-n}$. We define the space of univariate B-spline functions of degree $p \in \mathbb{N}_0$, with maximal regularity in $\mathrm{H}^p([0,1])$ and support of size $(p+1)h$, by
\begin{align*}
S_h([0,1])
&:= \left\{ v_h \in \mathrm{H}^p([0,1]) \;\middle|\;
(v_h)_{|_{(h j,\, h (j+1))}} \in \mathbb{P}^p, \quad \forall j \in I_h \right\}\\
&= \spn \left\{ \varphi_{h,j}^p, \; j \in I_h \right\},
\end{align*}
where $I_h := \{0, \ldots, h^{-1} - 1\}$.
In this definition, $\mathbb{P}^p$ denotes the space of univariate polynomials of degree less than or equal to $p$.
By standard trace arguments, the functions in $S_h([0,1])$ belong to $\mathcal{C}^{p-1}([0,1])$, in agreement with the classical characterization of B-spline spaces~\cite{schumaker07}. For simplicity, we omit the dependence on the spline degree $p$ in the notation of the space when no ambiguity arises. The second equality explicitly introduces the B-spline basis functions. We emphasize that these functions differ from the univariate shape functions commonly used in STD- and SGCT-PIC methods: in particular, B-spline basis functions belong to $\mathcal{C}^{p-1}([0,1])$, whereas standard shape functions are typically only $\mathcal{C}^0([0,1])$.
The space $S_h([0,1])$ serves as the fundamental building block for the construction of the approximation space. The latter is obtained by forming a truncated tensor-product space, referred to as a sparse-grid space, of the univariate spaces in each spatial direction. Such constructions are known to yield quasi-optimal approximation properties for Sobolev functions with mixed regularity; see, for example,~\cite{pinkus85}.

\paragraph{Anisotropic $d$-dimensional approximation space} We extend the uni-dimensional approximation space to higher dimension and introduce the so-called hierarchical subspaces, also known as hierarchical increments, which underpin the construction of sparse-grid approximation spaces. A more detail description of sparse grids can be found in the monograph~\cite{bungartz04} and the references therein. 

We begin by defining anisotropic $d$-dimensional B-spline spaces through full tensor products of the univariate spaces. Let $\bm{\ell} = (\ell_1, \ldots, \ell_d) \in \mathbb{N}^d$ denote the mesh resolution level in each spatial direction. The corresponding tensor-product space is defined as
\[
S_{h_{\bm{\ell}}}(\Omega)
:= \bigotimes_{i=1}^d S_{h_{\ell_i}}([0,1]).
\]
Here, the polynomial degree $p$ is chosen identical in all directions, while the mesh sizes $h_{\ell_i}$ may vary anisotropically. 
For later use, we introduce a basis of this space, obtained as the tensor products of the univariate B-spline basis functions:
\begin{align}
\label{eq:2}
S_{h_{\bm{\ell}}}(\Omega) = \text{span}\left\{ \varphi_{h_{\bm{\ell}},\bm{j}}^p \; \middle| \;  \varphi_{h_{\bm{\ell}},\bm{j}}^p := \bigotimes_{i=1}^d \varphi_{h_{\ell_i},j_i}^p, \quad j_i\in I_{h_{\ell_i}}, \quad  1\leq i \leq d \right\},
\end{align}
where $I_{h_{\ell_i}}=\{0,\ldots,h_{\ell_i}^{-1}-1\}$.
The space $S_{h_{\bm{\ell}}}(\Omega)$ is defined as the tensor product of one-dimensional spaces that are subsets of $\mathrm{H}^p([0,1])$, so that it is a subset of the mixed Sobolev space
\[
S_{h_{\bm{\ell}}}(\Omega) \subset \mathrm{H}^p_{\mathrm{mix}}(\Omega).
\]
According to~\cite[Theorem 8.1]{takacs16},~\cite[Corollary 3.1]{sande19} or~\cite[Proposition 3.1]{guillet25-1}, this finite-dimensional space exhibits optimal approximation properties (in the sense of the maximum theoretical convergence rate of $\mathcal{O}(h^{p+1})$ with respect to the mesh size $h$) for functions in the $\mathrm{H}^{p+1}(\Omega)$ Sobolev space measured in the $\mathrm{L}^2$-norm.
%\begin{proposition}
%\label{prop:4}
%For all $u\in\mathrm{H}^{p+1}(\Omega)$, for all $\varepsilon>0$, there exists $\bm{\ell}\in \mathbb{N}$, such that 
%\[
%\min_{\varphi_h\in S_{h_{\bm{\ell}}}(\Omega)} \|u-\varphi_h\|_{\mathrm{L}^2(\Omega)} \leq \varepsilon.
%\]
%\end{proposition}
%\begin{proof}[Proof of~\cref{prop:4}]
%The result follows from~\cite{takacs16}, theorem 8 and using similar arguments than the proofs of theorems 5 and 7.
%\end{proof}

%\Fabrice{We introduce the Galerkin projection operator onto this finite-dimensional space, denoted by $\Pi_{h_{\bm{\ell}}}$. For any distribution $u\in \mathrm{H}^{-p}_{\mathrm{mix}}(\Omega)$, its projection  $\Pi_{h_{\bm{\ell}}}\, u$ is uniquely defined by:
%\[
%\big(\Pi_{h_{\bm{\ell}}}\, u , v_h\big)_{\mathrm{L}^2} = \big\langle u, v_h \big\rangle_{\mathrm{H}^{-p}_{\mathrm{mix}}, \, \mathrm{H}^p_{\mathrm{mix}}}, \quad \forall v_h \in S_{h_{\bm{\ell}}}(\Omega),
%\]
%where $\langle \cdot, \cdot \rangle_{\mathrm{H}^{-p}_{\mathrm{mix}}, \, \mathrm{H}^p_{\mathrm{mix}}}$ denotes the duality pairing between $\mathrm{H}^{-p}_{\mathrm{mix}}(\Omega)$ and its dual $\mathrm{H}^p_{\mathrm{mix}}(\Omega)$. Note that since the Dirac delta distribution $\delta$ belongs to $\mathrm{H}^{-1}_{\mathrm{mix}}(\Omega)$, the Galerkin projection of the raw Monte Carlo estimator (introduced in \cref{eq:9}) is well-defined for linear and higher-degree splines $ \varphi_{h_{\bm{\ell}},\bm{j}}^p$ , i.e., whenever $p \geq 1$.}

\paragraph{Hierarchical subspace decomposition}
To construct the sparse-grid subspace, we exploit the hierarchical structure of the tensor-product spaces and express them via the direct-sum decomposition
\begin{align}
\label{eq:4}
S_{h_{\bm{\ell}}}(\Omega)
= \bigoplus_{\bm{k} \leq \bm{\ell}} W_{h_{\bm{k}}}(\Omega), \qquad \text{ where }
 \bm{k} \leq \bm{\ell} \Longleftrightarrow k_j \leq \ell_j, \quad  1\leq j \leq d .\end{align}
%where $\bm{k} \leq \bm{\ell}$ denotes component-wise inequalities, i.e., $k_j \leq \ell_j$ for all $j = 1, \ldots, d$.

A key property underlying the decomposition in \eqref{eq:4} is the nestedness of B-spline spaces: when a mesh is refined via knot insertion, the coarser space remains entirely contained within the finer one. Formally, letting $\bm{e}_j$ denote the unit vector in the $j$-th coordinate direction, we have:
\[
S_{h_{\bm{\ell} - \bm{e}_j}}(\Omega) \subset S_{h_{\bm{\ell}}}(\Omega), \quad \forall j = 1, \ldots, d.
\]
This property ensures that the new basis functions introduced at each refinement level are linearly independent of those existing at coarser levels, enabling the space to be decomposed into a direct sum of hierarchical increments.

The hierarchical subspaces, or increments, are defined as the following complements:
\begin{align}
\label{eq:14}
W_{h_{\bm{k}}}(\Omega)
:= S_{h_{\bm{k}}}(\Omega)
\bigg/ \bigoplus_{j=1}^d S_{h_{\bm{k} - \bm{e}_j}}(\Omega),% \qquad \text{ with } S_{h_{\bm{k}}}(\Omega) := {0} \text{ if } \exists k_j = 0 \text{ for any $j \in \{1, \ldots, d\}$}
\end{align}
with the convention that $S_{h_{\bm{k}}}(\Omega) := {0}$ if $k_j = 0$ for any $j \in \{1, \ldots, d\}$. Intuitively, each hierarchical increment $W_{h_{\bm{k}}}(\Omega)$ captures the additional information, specifically the new basis functions, required to pass from a coarse resolution to the finer resolution $\bm{k}$. The coefficients associated with these functions are often referred to as the hierarchical surplus. They represent the local correction (or surplus) that must be added to the coarse approximation to resolve finer scales without duplicating information already captured at coarser levels.

\paragraph{Truncations of hierarchical space}
The final step in constructing the approximation space consists of defining a truncation rule to select the appropriate hierarchical levels. To this end, we introduce an index set $ \mathscr{L}_h^{(\star)}$ such that the approximation space is defined as:
\[
V_{h}^{(\star)}(\Omega)
:= \bigoplus_{\bm{\ell} \in \mathscr{L}_h^{(\star)}} W_{h_{\bm{\ell}}}(\Omega).
\]
Sparse-grid methods rely on truncating the hierarchical subspaces by discarding the a priori less significant contributions.  Usually, the solution is assumed to have sufficiently regular mixed derivatives, so that a good approximation can be constructed by only taking into account the subspaces satisfying the criterion $|\bm{\ell}|_1\leq |\log h|$. In comparison to full-grid approximation spaces, constructed using the criterion $|\bm{\ell}|_\infty\leq |\log h|$, the sparse-grid space contains significantly less nodes and thus reduce the computational costs. We introduce in the following section different choices of truncation rules and discuss the resulting approximation space and their respective properties.

\subsubsection{Complexity and approximation properties}
\paragraph{Definition}
We introduce the different approximation spaces with a generic notation $V_h^{(\star)}(\Omega)$, where $(\star)$ refers to a specific definition of the index set $ \mathscr{L}_h^{(\star)}$:
\begin{definition}
\label{def:1}
The approximation spaces are defined by
\[
 V_{h}^{(\infty)}(\Omega)
:=\bigoplus_{\bm{\ell} \in \mathscr{L}_h^{(\infty)}} W_{h_{\bm{\ell}}}(\Omega), \qquad V_{h}^{(1)}(\Omega)
:=\bigoplus_{\bm{\ell} \in \mathscr{L}_h^{(1)}} W_{h_{\bm{\ell}}}(\Omega), 
\]
where the sets of hierarchical indices are given by
\begin{align}
&\mathscr{L}_{h}^{(\infty)}
:= \left\{
\bm{\ell} \in \mathbb{N}^d
\;\middle|\;
|\bm{\ell}|_\infty \leq |\log h| ,
\quad \ell_j \geq 0
\right\}, \\ 
&\mathscr{L}_{h}^{(1)}
:= \left\{
\bm{\ell} \in \mathbb{N}^d
\;\middle|\;
|\bm{\ell}|_1 \leq |\log h| ,
\quad \ell_j \geq 0
\right\}.
\end{align}
\end{definition}
These two constructions are referred to as the full-grid approximation space and the ($\mathrm{L}^2$-based) sparse-grid space. Following~\cite[Theorem 4.1]{guillet25-1}, an alternative characterization of the $\mathrm{L}^2$ approximation space, namely $V_{h}^{(1)}(\Omega)$, can be established based on the sparse-grid combination technique.
\begin{lemma}
\label{prop:3}
The approximation space $V_h^{(1)}(\Omega)$ can equivalently be expressed as
\begin{align}
\label{eq:9}
V_h^{(1)}(\Omega)
:= \left\{
v_h = \sum_{\bm{\ell} \in \mathscr{L}_h} c_{\bm{\ell}} v_{h_{\bm{\ell}}},
\quad \text{where } v_{h_{\bm{\ell}}} \in S_{h_{\bm{\ell}}}(\Omega)
\right\},
\end{align}
where the admissible index set is defined by \cref{eq:8}.
\end{lemma}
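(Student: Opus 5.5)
The plan is to show that both sides are the same linear space, namely the direct sum of the hierarchical increments $W_{h_{\bm{k}}}(\Omega)$ over one downward-closed index set. The argument uses only the decomposition \eqref{eq:4} and the nestedness of the B-spline spaces.

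\emph{Step 1 (reduction to a sum of subspaces).} By \eqref{eq:def:comb:coeff}, every coefficient $c_{\bm{\ell}} = (-1)^l\binom{d-1}{l}$ is nonzero for $0\le l\le d-1$. Each $v_{h_{\bm{\ell}}}$ ranges over the whole linear space $S_{h_{\bm{\ell}}}(\Omega)$, so each $c_{\bm{\ell}}v_{h_{\bm{\ell}}}$ also ranges over all of $S_{h_{\bm{\ell}}}(\Omega)$. Hence the right-hand side of \eqref{eq:9} is exactly the (not necessarily direct) sum $\sum_{\bm{\ell}\in\mathscr{L}_h} S_{h_{\bm{\ell}}}(\Omega)$. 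The combination coefficients therefore play no role in the statement.

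\emph{Step 2 (expansion into increments).} Let $\bm{m}=(M,\dots,M)$ with $M$ at least the largest component of any index in $\mathscr{L}_h$. Applying \eqref{eq:4} to $S_{h_{\bm{m}}}(\Omega)$ gives a genuine direct sum $\bigoplus_{\bm{k}\le\bm{m}}W_{h_{\bm{k}}}(\Omega)$. By nestedness, each $S_{h_{\bm{\ell}}}(\Omega)$ with $\bm{\ell}\le\bm{m}$ is the sub-sum $\bigoplus_{\bm{k}\le\bm{\ell}}W_{h_{\bm{k}}}(\Omega)$ inside this ambient decomposition. Sums of sub-sums of a fixed direct decomposition are again direct sums. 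It follows that
\[
\sum_{\bm{\ell}\in\mathscr{L}_h} S_{h_{\bm{\ell}}}(\Omega)=\bigoplus_{\bm{k}\in\mathcal{K}} W_{h_{\bm{k}}}(\Omega),
\]
where $\mathcal{K}:=\{\bm{k}\;|\;\exists\,\bm{\ell}\in\mathscr{L}_h,\ \bm{k}\le\bm{\ell}\}$ is the downward closure of $\mathscr{L}_h$.

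\emph{Step 3 (identifying the index set).} I would show that $\mathcal{K}$ coincides with the set of nontrivial indices of $\mathscr{L}_h^{(1)}$. By the convention following \eqref{eq:14}, $W_{h_{\bm{k}}}=0$ whenever some $k_j=0$, so only indices with $k_j\ge1$ matter on both sides.
\begin{enumerate}
\item Every $\bm{\ell}\in\mathscr{L}_h$ satisfies $|\bm{\ell}|_1\le n+d-1$. Therefore every $\bm{k}\in\mathcal{K}$ satisfies the same bound.
\item Conversely, take $\bm{k}$ with $k_j\ge1$ and $|\bm{k}|_1\le n+d-1$. Increasing one component by $n+d-1-|\bm{k}|_1$ produces some $\bm{\ell}\ge\bm{k}$ with $|\bm{\ell}|_1=n+d-1$, that is, $\bm{\ell}\in\mathscr{L}_{h,0}$.
\end{enumerate}
So $\mathcal{K}=\{\bm{k}\;|\;k_j\ge1,\ |\bm{k}|_1\le n+d-1\}$, and the families with $l\ge1$ contribute nothing new.

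\emph{Main obstacle.} The main difficulty is reconciling this with Definition~\ref{def:1}, which is written as $|\bm{\ell}|_1\le|\log h|=n$ with $\ell_j\ge0$. I read the two indexings as differing by the usual unit shift per direction between component-grid levels in \eqref{eq:8} and hierarchical levels, i.e.\ $\bm{k}\mapsto\bm{k}-\bm{1}$. Under this shift the set $\mathcal{K}$ becomes $\{\bm{k}'\ge\bm{0}\;|\;|\bm{k}'|_1\le n-1\}$, and I expect the remaining off-by-one to be absorbed by the level-$0$ convention. I would fix this convention explicitly, following \cite[Theorem 4.1]{guillet25-1}, before concluding. Once the indexing is settled, Steps~1--3 give both inclusions immediately.
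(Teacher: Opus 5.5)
The paper gives no argument of its own for this lemma; it only defers to \cite[Theorem~4.1]{guillet25-1}. Your Steps~1--3 are a correct, self-contained replacement. Since every $c_{\bm{\ell}}\neq 0$, the coefficients drop out. Since the increments $W_{h_{\bm{k}}}$ are fixed subspaces, spanned by the level-$\bm{k}$ hierarchical B-splines independently of the ambient level (this, not nestedness alone, is what Step~2 needs), \eqref{eq:4} yields
\[
\Bigl\{\sum_{\bm{\ell}\in\mathscr{L}_h}c_{\bm{\ell}}v_{h_{\bm{\ell}}}\Bigr\}
=\sum_{\bm{\ell}\in\mathscr{L}_h}S_{h_{\bm{\ell}}}(\Omega)
=\bigoplus_{\bm{k}\ge\bm{1},\ |\bm{k}|_1\le n+d-1}W_{h_{\bm{k}}}(\Omega).
\]

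The gap is your final step. The discrepancy you defer is not an off-by-one that a level-$0$ convention can absorb, and as the definitions stand the lemma does not hold for $d\ge 2$. No relabeling is available, because \eqref{eq:9} and \cref{def:1} are built from the same spaces $S_{h_{\bm{\ell}}}$ with $h_{\ell_i}=2^{-\ell_i}$.

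Take $\bm{\ell}=(n,1,\dots,1)\in\mathscr{L}_{h,0}$. Then $S_{h_{\bm{\ell}}}$ lies in the right-hand side of \eqref{eq:9} and contains the nonzero increment $W_{h_{\bm{\ell}}}$. But $|\bm{\ell}|_1=n+d-1>n$, so $\bm{\ell}\notin\mathscr{L}_h^{(1)}$, and by directness of the increment decomposition $W_{h_{\bm{\ell}}}\cap V_h^{(1)}(\Omega)=\{0\}$. This holds under either reading of level $0$:
\begin{itemize}
\item if level-$0$ spaces vanish, as the convention after \eqref{eq:14} states, then $V_h^{(1)}=\bigoplus_{\bm{k}\ge\bm{1},\,|\bm{k}|_1\le n}W_{h_{\bm{k}}}$;
\item if level $0$ carries the constants, then $V_h^{(1)}=\bigoplus_{\bm{k}\ge\bm{0},\,|\bm{k}|_1\le n}W_{h_{\bm{k}}}$.
\end{itemize}
In both cases the right-hand side of \eqref{eq:9} strictly contains $V_h^{(1)}(\Omega)$. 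Concretely, for $d=2$, $n=6$ and periodic splines, the second reading gives $\dim V_h^{(1)}=256$, which is exactly the $|N_h|$ reported for HSG-$(1)$ in \cref{tab:1}. By contrast, $\dim\sum_{\bm{\ell}\in\mathscr{L}_h}S_{h_{\bm{\ell}}}=448$. Even granting your unit shift, what goes missing is the entire layer $|\bm{k}'|_1=n$, not anything attributable to level $0$.

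What Steps~1--3 actually prove is the lemma with $\mathscr{L}_h^{(1)}$ replaced by $\{\bm{\ell}\ge\bm{1}:\ |\bm{\ell}|_1\le n+d-1\}$. This is the Bungartz--Griebel indexing, which matches \eqref{eq:8} and the vanishing level-$0$ convention. Alternatively, keep \cref{def:1} with a nontrivial level $0$ and change the combination index set to $\{\bm{\ell}\ge\bm{0}:\ |\bm{\ell}|_1=n-l\}$, $0\le l\le d-1$, whose downward closure is exactly $\mathscr{L}_h^{(1)}$. Fix one of these conventions explicitly in the statement; your argument then closes without any appeal to \cite{guillet25-1}.
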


 \paragraph{Complexity}
 As a direct consequence of the hierarchical basis representation of~\cref{def:1}, the sparse-grid space $V_{h}^{(1)}(\Omega)$ is included in the full-grid space $V_{h}^{(\infty)}(\Omega)$, that is 
\[ V_{h}^{(1)}(\Omega)\subset V_{h}^{(\infty)}(\Omega).
\]
 The following proposition is a result from~\cite[Proposition 5.1, Equation (19)]{guillet25-1} and gives an estimate of the complexity of the approximation spaces.
 \begin{proposition}
 \label{prop:4}
 The dimension of the different approximation spaces, i.e., the number of mesh nodes, is given by
 \begin{align*}
  \left|V_{h}^{(1)}(\Omega)\right|= \mathcal{O}(|\log h|^{d-1}h^{-1}),  \quad \left|V_{h}^{(\infty)}(\Omega)\right|= \mathcal{O}(h^{-d}).
 \end{align*}
 \end{proposition}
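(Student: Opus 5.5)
The plan is to count basis functions through the hierarchical direct-sum decomposition of \cref{def:1}. Since the sum is direct,
\[
\left|V_h^{(\star)}(\Omega)\right| = \sum_{\bm{\ell}\in\mathscr{L}_h^{(\star)}} \dim W_{h_{\bm{\ell}}}(\Omega).
\]
I first bound the dimension of a single increment. On the periodic torus, $\dim S_{h_{\ell_i}}([0,1]) = 2^{\ell_i}$ for $\ell_i\ge 1$, since $|I_{h_{\ell_i}}| = h_{\ell_i}^{-1}$. For the univariate increment $W_{h_{\ell_i}}$ this gives $\dim W_{h_{\ell_i}} = 2^{\ell_i}-2^{\ell_i-1} = 2^{\ell_i-1}$. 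The level $\ell_i=0$ contributes either the constants (dimension one) or nothing, depending on the convention in \cref{eq:14}; in both cases the univariate increment has dimension at most $2^{\ell_i}$.

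Next I check that the increments tensorize, namely that $W_{h_{\bm{\ell}}} \cong \bigotimes_i W_{h_{\ell_i}}$. Here I use the nestedness of B-spline spaces and the identity $S_{h_{\bm{\ell}}}=\bigotimes_i\bigoplus_{k_i\le \ell_i} W_{h_{k_i}}$, and expand the tensor product. Comparing with \cref{eq:4} and using uniqueness of the direct-sum decomposition gives $\dim W_{h_{\bm{\ell}}}\le 2^{|\bm{\ell}|_1}$. This identification is the one point where I would be careful about the quotient definition in \cref{eq:14}. If it turns out to be delicate, I would bypass it: \cref{eq:4} alone shows that the sum of the increment dimensions over any downward-closed set equals a telescoping count, and an upper bound suffices for an $\mathcal{O}$ statement.

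For the full grid, the index set $\mathscr{L}_h^{(\infty)}$ is a box, so by \cref{eq:4} the space $V_h^{(\infty)}$ is exactly $S_{h_{(n,\dots,n)}}(\Omega)$. Its dimension is therefore $\prod_i 2^n = h^{-d}$.

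For the sparse grid, I group the indices by $m=|\bm{\ell}|_1\le n$. The number of $\bm{\ell}\in\mathbb{N}_0^d$ with $|\bm{\ell}|_1=m$ is $\binom{m+d-1}{d-1}\lesssim (m+1)^{d-1}$. Hence
\[
\left|V_h^{(1)}\right| \le \sum_{m=0}^{n} \binom{m+d-1}{d-1}\, 2^{m} \lesssim n^{d-1}\sum_{m=0}^n 2^m \le 2\, n^{d-1} 2^{n} = \mathcal{O}\big(|\log h|^{d-1}h^{-1}\big).
\]
The only mild obstacle is the bookkeeping of the zero levels and the periodic dimension count. These affect only constants depending on $d$, never the rate. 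As a cross-check, \cref{prop:3} gives the same bound directly: $V_h^{(1)}$ is spanned by the spaces $S_{h_{\bm{\ell}}}$ with $\bm{\ell}\in\mathscr{L}_h$, each of dimension at most $2^{n+d-1}$, and the number of such levels is $\mathcal{O}(n^{d-1})$ by \cref{eq:8}.
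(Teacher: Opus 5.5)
Your proposal is correct. The paper gives no argument of its own here: it imports both bounds from \cite[Proposition 5.1, Equation (19)]{guillet25-1}, so there is no in-paper route to compare against. Your count (direct-sum dimension as a sum of increment dimensions, $\binom{m+d-1}{d-1}$ levels of total size $m$, then a geometric sum in $m$) is a self-contained replacement for that citation.

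The citation keeps the exposition short, but it leaves the reader to reconcile conventions that do not quite match within the present paper. \cref{def:1} uses $\ell_j\ge 0$ and $|\bm{\ell}|_1\le|\log h|$, together with the convention $S_{h_{\bm{k}}}=\{0\}$ when some $k_j=0$ stated after \cref{eq:14}. By contrast, \cref{eq:8} and \cref{prop:3} use $\ell_j\ge1$ with top level $|\bm{\ell}|_1=n+d-1$. In addition, the count $|I_h|=h^{-1}$ presupposes periodic splines; otherwise $\dim S_{h_\ell}([0,1])=2^{\ell}+p$. Each of these discrepancies changes the count only by a factor depending on $d$ and $p$, and your estimates absorb all of them. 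That robustness is what your direct argument buys.

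You can also shorten the argument in two places.

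First, the tensorization step and the telescoping fallback are unnecessary. Taking $\bm{\ell}=\bm{k}$ in \cref{eq:4} makes $W_{h_{\bm{k}}}$ a direct summand of $S_{h_{\bm{k}}}$. Hence $\dim W_{h_{\bm{k}}}\le\dim S_{h_{\bm{k}}}\le 2^{|\bm{k}|_1}$ in the periodic setting, immediately.

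Second, your cross-check via \cref{prop:3} is already a complete proof on its own. The combinations $\sum_{\bm{\ell}}c_{\bm{\ell}}v_{h_{\bm{\ell}}}$ with all $c_{\bm{\ell}}\neq0$ form exactly the subspace sum $\sum_{\bm{\ell}\in\mathscr{L}_h}S_{h_{\bm{\ell}}}$. By subadditivity of dimension, this has dimension at most $|\mathscr{L}_h|\,2^{n+d-1}=\mathcal{O}(n^{d-1}2^{n})$. This route never touches the quotient definition \cref{eq:14}.
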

 
 \paragraph{Approximation properties} 
\Fabrice{In order to regularize the discrete particle distribution, the raw Monte Carlo density estimator $\rho_N$ must be appropriately mapped onto the finite-dimensional space $V_h^{(\star)}(\Omega)$. This smoothing process is achieved by means of a generalized $\mathrm{L}^2$-Galerkin projection operator. We first formalize the mathematical existence and uniqueness of this operator for general distributions in \cref{prop:galerkin_projection}. Then, focusing on the case of sufficiently smooth target functions, we recall in \cref{lem:7} the structural approximation errors and optimal convergence rates associated with this projection.}

\Fabrice{\begin{proposition}[$\mathrm{L}^2$-Galerkin projection onto $V_{h}^{(\star)}(\Omega)$] \label{prop:galerkin_projection}
Let $p \geq 1$ and $u \in \mathrm{H}^{-p}_{\mathrm{mix}}(\Omega)$. There exists a unique element $\Pi_{h}^{(\star)}u \in V_{h}^{(\star)}(\Omega)$ satisfying
\begin{equation}
\label{eq:def:Galerkin:projection}
\big(\Pi_{h}^{(\star)} u,\, v_h\big)_{\mathrm{L}^2(\Omega)}
= \langle u, v_h \rangle_{\mathrm{H}^{-p}_{\mathrm{mix}},\,
\mathrm{H}^p_{\mathrm{mix}}},
\qquad \forall v_h \in V_{h}^{(\star)}(\Omega),
\end{equation}
where $\langle \cdot, \cdot \rangle_{\mathrm{H}^{-p}_{\mathrm{mix}},\,
\mathrm{H}^p_{\mathrm{mix}}}$ denotes the duality pairing between
$\mathrm{H}^{-p}_{\mathrm{mix}}(\Omega)$ and its dual $\mathrm{H}^p_{\mathrm{mix}}
(\Omega)$. The operator $\Pi_{h}^{(\star)} : \mathrm{H}^{-p}_{\mathrm{mix}}
(\Omega) \to V_{h}^{(\star)}(\Omega)$ is the $\mathrm{L}^2$-Galerkin
projection onto $V_{h}^{(\star)}(\Omega)$.
\end{proposition}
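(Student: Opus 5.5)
The plan is to reduce the statement to a finite-dimensional linear system, i.e. the Riesz representation theorem on the finite-dimensional Hilbert space $\big(V_h^{(\star)}(\Omega), (\cdot,\cdot)_{\mathrm{L}^2(\Omega)}\big)$. Three steps are needed.

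\emph{Step 1: the space is finite-dimensional and lies in $\mathrm{H}^p_{\mathrm{mix}}(\Omega)$.} By \cref{def:1} and \eqref{eq:4}, each increment $W_{h_{\bm{\ell}}}(\Omega)$ is a subspace of $S_{h_{\bm{\ell}}}(\Omega)$, and the index sets $\mathscr{L}_h^{(\star)}$ are finite. Hence $V_h^{(\star)}(\Omega)$ is a finite sum of subspaces of $\mathrm{H}^p_{\mathrm{mix}}(\Omega)$, and is itself a finite-dimensional subspace of $\mathrm{H}^p_{\mathrm{mix}}(\Omega)$. For $\star=\infty$ it equals the full tensor space $S_{h_{\bm{n}}}(\Omega)$, and its dimension is bounded by \cref{prop:4}. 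Fix a basis $\{\psi_i\}_{i=1}^M$ of $V_h^{(\star)}(\Omega)$, for instance hierarchical B-splines.

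\emph{Step 2: the functional is well defined.} For $u\in \mathrm{H}^{-p}_{\mathrm{mix}}(\Omega) = (\mathrm{H}^p_{\mathrm{mix}}(\Omega))'$, the map $v_h\mapsto\langle u,v_h\rangle$ is defined and linear on $V_h^{(\star)}(\Omega)\subset \mathrm{H}^p_{\mathrm{mix}}(\Omega)$. It is bounded, since $|\langle u,v_h\rangle|\le \|u\|_{\mathrm{H}^{-p}_{\mathrm{mix}}}\|v_h\|_{\mathrm{H}^p_{\mathrm{mix}}}$. On a finite-dimensional space all norms are equivalent, so it is also continuous for the $\mathrm{L}^2$ norm. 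Here the hypothesis $p\ge1$ matters: it guarantees that Dirac masses belong to $\mathrm{H}^{-p}_{\mathrm{mix}}$, because the tensorized $\mathrm{H}^1$ embeds in $\mathcal{C}^0$ in each direction. This is what lets $\rho_N$ be projected later. For the abstract statement itself, $u\in \mathrm{H}^{-p}_{\mathrm{mix}}$ is all we use.

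\emph{Step 3: existence and uniqueness.} Write $\Pi_h^{(\star)}u=\sum_j c_j\psi_j$. Testing \eqref{eq:def:Galerkin:projection} with each $\psi_i$ gives the linear system $Mc=b$, where $M_{ij}=(\psi_j,\psi_i)_{\mathrm{L}^2}$ and $b_i=\langle u,\psi_i\rangle$. By linearity, testing on the basis is equivalent to testing on all of $V_h^{(\star)}(\Omega)$. The Gram matrix $M$ is symmetric positive definite: $c^\top Mc=\|\sum_j c_j\psi_j\|_{\mathrm{L}^2}^2$, and this vanishes only if the function is zero a.e., which forces $c=0$ by linear independence. Hence $M$ is invertible, and $\Pi_h^{(\star)}u=\sum_j (M^{-1}b)_j\psi_j$ exists and is unique. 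Uniqueness can also be seen directly: if two solutions exist, their difference $w$ satisfies $(w,v_h)=0$ for all $v_h$, and taking $v_h=w$ gives $w=0$. Linearity of $\Pi_h^{(\star)}$ in $u$ follows from uniqueness. Moreover, when $u\in\mathrm{L}^2$ the duality pairing reduces to the $\mathrm{L}^2$ inner product, so $\Pi_h^{(\star)}$ is the orthogonal projection, which justifies the name.

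The only point requiring care is Step 1, namely that the hierarchical construction yields a genuine subspace of $\mathrm{H}^p_{\mathrm{mix}}$. The quotient notation in \eqref{eq:14} should be read as a chosen complement inside $S_{h_{\bm{k}}}(\Omega)$, spanned by the new B-splines, so that each increment is an actual subspace. Once this is settled, the rest is elementary linear algebra.
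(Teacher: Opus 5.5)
Your proposal is correct and follows the paper's proof in Appendix~\ref{apd:0}. Both arguments show $V_h^{(\star)}(\Omega)\subset\mathrm{H}^p_{\mathrm{mix}}(\Omega)$, obtain $\mathrm{L}^2$-continuity of $v_h\mapsto\langle u,v_h\rangle$ from the equivalence of norms in finite dimension, and conclude with the Riesz representation theorem on $\big(V_h^{(\star)}(\Omega),(\cdot,\cdot)_{\mathrm{L}^2(\Omega)}\big)$. Your Step~3 is the same Riesz step written in coordinates through the symmetric positive definite Gram (mass) matrix, which incidentally shows that the continuity argument is not needed in finite dimension.
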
}

\begin{proof}[Proof of~\cref{prop:galerkin_projection}]
See \cref{apd:0} of the appendix.
\end{proof}

 The following lemma recalls the approximation error estimates. 
  \begin{lemma}[Approximation properties]
 \label{lem:7}The following estimates hold:
 \begin{enumerate}[label=\roman*)]
 \item \label{lem:7.1} For all $u\in \mathrm{H}^{q}(\Omega)$, with $0\leq q\leq p+1$, then
 \begin{align}
  \| (\operatorname{I} - \Pi_{h}^{(\infty)})u\|_{\mathrm{L}^2(\Omega)} \lesssim h^{q} \|u\|_{\mathrm{H}^{q}(\Omega)}.
 \end{align}
  \item \label{lem:7.2} For all $u\in \mathrm{H}_{\mathrm{mix}}^{q}(\Omega)$, with $0\leq q\leq p+1$, then
 \begin{align}
   \| (\operatorname{I} - \Pi_{h}^{(1)})u\|_{\mathrm{L}^2(\Omega)} \lesssim h^{q} |\log h|^{d-1}\|u\|_{\mathrm{H}_{\mathrm{mix}}^{q}(\Omega)}.
    \end{align}
     \item \label{lem:7.3}For all $u\in \mathrm{H}_{\mathrm{mix}}^{2}(\Omega)$ and $p=1$, then
 \begin{align}
 \min_{v_h\in V_h^{(\star)}(\Omega)} \| u - v_h\|_{\mathrm{H}^1(\Omega)}\lesssim h \|u\|_{\mathrm{H}_{\mathrm{mix}}^{2}(\Omega)}.
 \end{align}
 \end{enumerate}
 \end{lemma}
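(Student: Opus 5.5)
Since $\Pi_h^{(\star)}$ is the $\mathrm{L}^2$-orthogonal projection onto $V_h^{(\star)}(\Omega)$ when restricted to $\mathrm{L}^2(\Omega)\subset \mathrm{H}^{-p}_{\mathrm{mix}}(\Omega)$, all three estimates reduce to best-approximation bounds. For \ref{lem:7.1} and \ref{lem:7.2}, I use $\|(\operatorname{I}-\Pi_h^{(\star)})u\|_{\mathrm{L}^2}=\min_{v_h\in V_h^{(\star)}}\|u-v_h\|_{\mathrm{L}^2}$ and construct a suitable $v_h$. For \ref{lem:7.3} no projection is involved, and I only need a good element of $V_h^{(\star)}$ in the $\mathrm{H}^1$ norm.

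For \ref{lem:7.1}, I note that $V_h^{(\infty)}(\Omega)=S_{h_{\bm{n}}}(\Omega)$ with $\bm{n}=(n,\dots,n)$, $h=2^{-n}$, by telescoping the direct sum \eqref{eq:4}. The bound then follows from the cited optimal isotropic spline approximation results (\cite[Theorem 8.1]{takacs16}, \cite[Corollary 3.1]{sande19}) for $q=p+1$. For $0\le q<p+1$, I use either the same references, which give all intermediate orders via a quasi-interpolant, or interpolation between $q=0$ and $q=p+1$.

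For \ref{lem:7.2}, I follow the classical sparse-grid argument. I pick a stable univariate quasi-interpolant $Q_\ell$ onto $S_{h_\ell}$, for instance the $\mathrm{L}^2$-projection. It is uniformly $\mathrm{L}^2$-stable and satisfies $\|(\operatorname{I}-Q_\ell)w\|_{\mathrm{L}^2}\lesssim 2^{-\ell q}\|w^{(q)}\|_{\mathrm{L}^2}$. I then set $\Delta_\ell=Q_\ell-Q_{\ell-1}$, so that $\|\Delta_\ell w\|\lesssim 2^{-\ell q}\|w^{(q)}\|$, and form the tensorized differences $\Delta_{\bm{\ell}}=\bigotimes_i\Delta_{\ell_i}$. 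The range of $\Delta_{\bm{\ell}}$ lies in $S_{h_{\bm{\ell}}}$, and the candidate is $v_h=\sum_{|\bm{\ell}|_1\le n}\Delta_{\bm{\ell}}u$. This lies in $V_h^{(1)}$ because every $S_{h_{\bm{\ell}}}$ with $|\bm{\ell}|_1\le n$ is contained in $V_h^{(1)}$; alternatively one can invoke \cref{prop:3}. By tensor-product stability, each term obeys $\|\Delta_{\bm{\ell}}u\|_{\mathrm{L}^2}\lesssim 2^{-q|\bm{\ell}|_1}\|D^{(q,\dots,q)}u\|_{\mathrm{L}^2}$, where the derivative is taken only in directions with $\ell_i\ge1$. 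The error $u-v_h=\sum_{|\bm{\ell}|_1>n}\Delta_{\bm{\ell}}u$ is then bounded by
\[
\sum_{m>n} 2^{-qm}\,\#\{|\bm{\ell}|_1=m\}\,\|u\|_{\mathrm{H}^q_{\mathrm{mix}}}\lesssim \sum_{m>n}2^{-qm}m^{d-1}\|u\|_{\mathrm{H}^q_{\mathrm{mix}}}\lesssim 2^{-qn}n^{d-1}\|u\|_{\mathrm{H}^q_{\mathrm{mix}}},
\]
which is $h^q|\log h|^{d-1}\|u\|_{\mathrm{H}^q_{\mathrm{mix}}}$. For $q=0$ the bound is trivial, since the projection is $\mathrm{L}^2$-stable.

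For \ref{lem:7.3} with $\star=\infty$, the standard isotropic $\mathrm{H}^1$ estimate for linear splines applies, since $\mathrm{H}^2_{\mathrm{mix}}\subset\mathrm{H}^2$. With $\star=1$, I reuse the decomposition above with $p=1$ and measure $\Delta_{\bm{\ell}}u$ in $\mathrm{H}^1$. Differentiating in direction $j$ costs a factor $2^{\ell_j}$ via an inverse estimate. Using second derivatives in all directions then gives
\[
\|\partial_j\Delta_{\bm{\ell}}u\|_{\mathrm{L}^2}\lesssim 2^{\ell_j}2^{-2|\bm{\ell}|_1}\|u\|_{\mathrm{H}^2_{\mathrm{mix}}}\le 2^{-|\bm{\ell}|_1}\|u\|_{\mathrm{H}^2_{\mathrm{mix}}}.
\]
This crude bound only yields $h|\log h|^{d-1}$. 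Removing the logarithm is the main obstacle. I would first use the sharper per-term bound $2^{-|\bm{\ell}|_1-\sum_{i\ne j}\ell_i}$, in which direction $j$ gives one power less. Summing over $|\bm{\ell}|_1=m>n$, the tail $\sum 2^{-m}2^{-(m-\ell_j)}$ is a geometric series over $\ell_j\le m$ with no combinatorial factor, so it gives $2^{-m}$ per shell and $h$ in total. Because the summation is over shells, the per-level sum must be handled with a strengthened Cauchy–Schwarz or orthogonality argument, as in Bungartz–Griebel's energy estimate. If this fails, I would cite the known result \cite{bungartz04} that the sparse grid $\mathrm{H}^1$ error is $\mathcal{O}(h)$ for piecewise linears with $\mathrm{H}^2_{\mathrm{mix}}$ regularity.
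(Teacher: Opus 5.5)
Your proposal is correct, but it is more self-contained than the paper's proof. The paper's proof of this lemma is purely bibliographic: (i) is taken from \cite[Proposition 3.1]{guillet25-1}, (ii) from \cite[Lemma 4.4]{guillet25-1}, and (iii) from \cite[Lemma 3.5, Theorem 3.8]{bungartz04}.

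For (i) you do essentially what the paper does: you identify $V_h^{(\infty)}(\Omega)$ with $S_{h_{\bm{n}}}(\Omega)$ and invoke isotropic spline estimates. For (ii) you instead reprove the cited sparse-grid estimate with the classical argument. You use tensorized differences $\Delta_{\bm\ell}$ of univariate $\mathrm{L}^2$-projections, per-term decay $2^{-q|\bm\ell|_1}$, and a triangle inequality over shells of cardinality $\sim m^{d-1}$. This makes the origin of the factor $|\log h|^{d-1}$ transparent, whereas the citation keeps the paper short. Since the ranges of the $\Delta_{\bm\ell}$ are mutually $\mathrm{L}^2$-orthogonal, the same argument would even give $|\log h|^{(d-1)/2}$.

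One bookkeeping point concerns level indexing. With the paper's convention $S_{h_{\bm k}}(\Omega)=\{0\}$ whenever some $k_j=0$, levels start at $1$, and $\Delta_1=Q_1$ carries no decay. If you instead start at level $0$ with a nontrivial $Q_0$, terms such as $\bm\ell=(n,0,\dots,0)$ do not lie in $V_h^{(1)}(\Omega)$, so shift the indices accordingly. This only affects constants.

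In (iii) for $(\star)=(1)$, your hesitation at the end is unnecessary: no strengthened Cauchy--Schwarz or orthogonality argument is needed. From $\|\partial_j\Delta_{\bm\ell}u\|_{\mathrm{L}^2(\Omega)}\lesssim 2^{\ell_j-2|\bm\ell|_1}\|u\|_{\mathrm{H}^2_{\mathrm{mix}}(\Omega)}$ and
\[
\sum_{|\bm\ell|_1=m}2^{\ell_j}\lesssim\sum_{k\ge 0}2^{m-k}(k+1)^{d-2}\lesssim 2^{m},
\]
each shell contributes $\lesssim 2^{-m}$. The plain triangle inequality over $m>n$ then already gives $h$. The combinatorial factor is present, contrary to your remark, but it is summable against $2^{-k}$. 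The $\mathrm{L}^2$ part of the $\mathrm{H}^1$ norm is covered by (ii). This is exactly the Bungartz--Griebel argument, so your fallback citation coincides with the paper's proof.
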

 \begin{proof}[Proof of~\cref{lem:7}] 
 \cref{lem:7.1} results from~\cite[Proposition 3.1]{guillet25-1},  \cref{lem:7.2} results from~\cite[Lemma 4.4]{guillet25-1}, and \cref{lem:7.3} results from~\cite[Lemma 3.5, Theorem 3.8]{bungartz04}.
 \end{proof}

% 
%In the following, we use the notation $V_{h}^{(\star)}(\Omega)$ to refer to either the full-grid space, the $\mathrm{L}^2$-norm-based sparse-grid space or the energy-based sparse-grid space.
%
%The $\mathrm{L}^2$-orthogonal projection of a distribution $u$ onto $V_{h}^{(\star)}(\Omega)$ is denoted by $\Pi_{h}^{(\star)} u$ and satisfies
%\[
%(\Pi_{h}^{(\star)} u - u, u_h)_{\mathrm{L}^2(\Omega)} = 0,
%\quad \forall u_h \in V_{h}^{(\star)}(\Omega).
%\]
%The $\mathrm{L}^2$-orthogonal projection onto the sparse-grid space is well-defined for all distributions in $\mathrm{H}^{-q}_{\mathrm{mix}}(\Omega)$ with $d/2 < q \le p+1$. This ensures that particle-based Dirac delta estimators, which are only distributions, can be consistently projected onto the approximation space.

\subsection{Variational discretization}\label{sec:2.3}
In this section, we detail the different steps of the HSG-PIC algorithm.  For notational convenience, the explicit time dependence is omitted in the remainder of this section.

%\paragraph{Charge density approximation onto the mesh}
%We define a particle-based approximation of the charge density on the mesh. Specifically, the Monte-Carlo statistical estimator $\rho_N$, given by the sum of Dirac deltas centered at the particle positions, is projected onto the subspace $V_{h}^{(\star)}(\Omega)$:
%\[
%\Pi_{h}^{(\star)}\rho_N = \sum_{s=1}^N w_s\Pi_{h}^{(\star)}\delta(\cdot - \bm{x}_s).
%\]
We introduce the space of $\mathrm{L}^2$ functions with vanishing mean, defined by
\[
\mathrm{L}^2_0(\Omega)
:= \left\{ u \in \mathrm{L}^2(\Omega) \;\middle|\;
\frac{1}{\mu(\Omega)} \int_{\Omega} u \, d\bm{x} = 0
\right\}, \qquad \mu(\Omega) = \int_{\Omega} \, d\bm{x}.
\]

\paragraph{Resolution of Poisson equation} 
A variational formulation of Poisson equation in the space $V := \mathrm{H}^1(\Omega) \cap \mathrm{L}^2_0(\Omega)$ is considered, written as
\begin{align}
&\text{find } \Phi\in V \text{ such that }  
\quad a(\Phi,v) = l(v), \qquad \forall v \in V.
\label{thm:3:eq:2}
\end{align}
In the above, the bilinear and linear forms are defined, by
\[
a(u,v) = (\bm{\nabla} u , \bm{\nabla} v)_{\mathrm{L}^2(\Omega)},
\quad 
l(v) := \int_{\Omega} \rho \, v \, d\bm{x}, \quad \forall u,v \in V.
\]
\Fabrice{The zero-mean constraint on $V$ removes the constant functions from the
kernel of $a$, which restores the coercivity of $a$ on $V$ by the
Poincar\'e-Wirtinger inequality, and therefore the well-posedness of
\cref{thm:3:eq:2} by the Lax--Milgram theorem.}

This problem is discretized by a Galerkin method on the subspace $V_{h,0}^{(\star)}(\Omega) := V_h^{(\star)}(\Omega) \cap \mathrm{L}^2_0(\Omega)$. Specifically, we solve the discrete problem:
\begin{align}
\left\{  \begin{aligned}
&\text{find } \Phi_{h,N}^{(\star)} \in V_{h,0}^{(\star)}(\Omega) \quad \text{ such that }\\
&a(\Phi_{h,N}^{(\star)}, v_h) = l_{h,N}(v_h), \quad \forall v_h  \in V_{h,0}^{(\star)}(\Omega),
  \end{aligned} \right.
\label{eq:5}
\end{align}
where the discrete linear form is defined by the Galerkin projection of the raw Monte Carlo estimator from \cref{eq:9}:
\[
l_{h,N}(v_h) := \big(\Pi_{h}^{(\star)} \rho_N, v_h\big)_{\mathrm{L}^2(\Omega)}.
\]

%Since $V_{h,0}^{(\star)}(\Omega) \subset V$ is a conforming finite-dimensional subspace, the coercivity of the bilinear form $a(\cdot,\cdot)$ is inherently preserved on $V_{h,0}^{(\star)}(\Omega)$. Consequently, the discrete variational problem \eqref{eq:5} admits a unique solution by the Lax--Milgram theorem.

\Fabrice{We now specify the algebraic formulation of the discrete problem. To derive the corresponding linear system, we expand the approximate potential $\Phi_{h,N}^{(\star)}$ in terms of the hierarchical B-spline basis as
\begin{equation}
  \Phi_{h,N}^{(\star)}(\bm{x}) = \sum_{\bm{\ell}\in\mathscr{H}_h^{(\star)}} \sum_{\bm{j}\in J_{h_{\bm{\ell}}}} \beta_{\bm{\ell},\bm{j}}\, \varphi^p_{h_{\bm{\ell}},\bm{j}}(\bm{x}),
\end{equation}
where the unknown expansion coefficients $\beta_{\bm{\ell},\bm{j}}$ represent the hierarchical surpluses. Testing \cref{eq:5} against all basis functions $\varphi^p_{h_{\bm{\ell}},\bm{j}}$ for $(\bm{\ell},\bm{j})\in\mathscr{H}_h^{(\star)}\times J_{h_{\bm{\ell}}}$ yields the square linear system
\begin{equation}
\label{eq:stiffness_system}
\mathbb{K}\,\bm{\beta} = \bm{f},
\end{equation}
where the entries of the stiffness matrix $\mathbb{K}$ are given by
\[
\mathbb{K}_{(\bm{\ell},\bm{j}),(\bm{\ell}',\bm{j}')}
:= a\big(\varphi^p_{h_{\bm{\ell}'},\bm{j}'},\, \varphi^p_{h_{\bm{\ell}},\bm{j}}\big)
= \int_\Omega \bm{\nabla}\varphi^p_{h_{\bm{\ell}'},\bm{j}'} \cdot \bm{\nabla}\varphi^p_{h_{\bm{\ell}},\bm{j}}\,\mathrm{d}\bm{x}.
\]
The right-hand side vector $\bm{f}$ is assembled directly from the macro-particle positions and weights. By evaluating the discrete linear form $l_{h,N}(v_h) = (\Pi_h^{(\star)}\rho_N, v_h)_{\mathrm{L}^2(\Omega)}$ at a test function $\varphi^p_{h_{\bm{\ell}},\bm{j}}$ and invoking the definition of the generalized $\mathrm{L}^2$-Galerkin projection \cref{eq:def:Galerkin:projection}, we obtain
\[
f_{(\bm{\ell},\bm{j})}
:= l_{h,N}\big(\varphi^p_{h_{\bm{\ell}},\bm{j}}\big)
= \big\langle \rho_N,\, \varphi^p_{h_{\bm{\ell}},\bm{j}} \big\rangle_{\mathrm{H}^{-p}_{\mathrm{mix}},\,\mathrm{H}^p_{\mathrm{mix}}}
= \sum_{s=1}^N w_s\, \big\langle \delta(\cdot-\bm{x}_s), \varphi^p_{h_{\bm{\ell}},\bm{j}} \big\rangle_{\mathrm{H}^{-p}_{\mathrm{mix}},\,\mathrm{H}^p_{\mathrm{mix}}}.
\]
Exploiting the sifting property of the Dirac distribution, this duality pairing simplifies directly to
\begin{equation}
\label{eq:rhs_assembly}
f_{(\bm{\ell},\bm{j})}
= \sum_{s=1}^N w_s\, \varphi^p_{h_{\bm{\ell}},\bm{j}}(\bm{x}_s).
\end{equation}
Each entry of $\bm{f}$ is therefore a weighted sum of the hierarchical
basis function evaluated at the particle positions, with only the
particles lying within the support of $\varphi^p_{h_{\bm{\ell}},\bm{j}}$
contributing a nonzero term.}

\begin{remark}\label{rem:linear:system}
  \Fabrice{Owing to the multiresolution nature of the hierarchical basis, the algebraic structure of the stiffness matrix $\mathbb{K}$ differs notably from standard nodal finite element matrices. At a given resolution level $\bm{\ell}$, the B-spline basis functions $\varphi^p_{h_{\bm{\ell}},\bm{j}}$ and $\varphi^p_{h_{\bm{\ell}},\bm{j}'}$ possess disjoint compact supports for $|\bm{j}-\bm{j}'|$ sufficiently large, ensuring that the intra-level diagonal blocks of the system remain highly sparse. Conversely, the off-diagonal blocks coupling distinct resolution levels $\bm{\ell} \neq \bm{\ell}'$ exhibit a denser connectivity profile, as a single coarse-level basis function naturally overlaps with a multitude of fine-level ones.} 

 \Fabrice{Nevertheless, because a fine-level function only couples with a limited number of hierarchical ancestors, the total number of non-zero entries per row remains bounded, and the matrix remains globally sparse. For the two-dimensional configurations investigated in this work, standard sparse direct solvers exhibit excellent efficiency and robustness, handling the localized hierarchical fill-in without significant computational overhead. For large-scale three-dimensional applications, however, where direct methods face severe memory limitations, the efficient resolution of \eqref{eq:stiffness_system} should require iterative schemes tailored to hierarchical spaces and dedicated multilevel preconditioners~\cite{bungartz04,bungartz_note_1999}.}
\end{remark}

\paragraph{Interpolation of the field at particles' position}
The electric field,  defined as the gradient of the electric potential, is evaluated at the particle positions as 
\[
\bm{E}_{h,N}^{(\star)}(\bm{x}_s) = \left (- \bm{\nabla} \Phi_{h,N}^{(\star)}\right) (\bm{x}_s)=\Fabrice{- \sum_{\bm{\ell}\in\mathscr{H}_h^{(\star)}}
\sum_{\bm{j}\in J_{h_{\bm{\ell}}}} \beta_{\bm{\ell},\bm{j}}\,
\bm{\nabla} \varphi^p_{h_{\bm{\ell}},\bm{j}}(\bm{x}_s)}.
\]
Note that when using linear basis functions, the electric field is not continuous; consequently, the relation above is only defined in the weak sense at the mesh nodes (i.e., when $\bm{x}_s=\bm{j}h_{\bm{\ell}}$). In
practice this is a measure-zero event that does not affect the particle
dynamics.

\paragraph{Evolution of particles}
The particles are advanced in time according to \cref{eq:1}. The particle pushing step is identical to that of the SGCT-PIC and STD-PIC methods. 

\subsection{A priori error estimates}\label{sec:2.4}
In this section, we derive estimates for the grid-based (bias) and the statistical (variance) components of the error.
By linearity of the projection, the bias of the density estimator can be expressed as
\[
\bias[\Pi_{h}^{(\star)} \rho_N] = (\operatorname{I} - \Pi_{h}^{(\star)}) \left( \esp[\rho_N] \right).
\]

The main result of this paper is the following theorem, which provides error estimates on the accuracy of both the density and electric field estimators for the $\mathrm{L}^2$-based, the $\mathrm{H}^1$-based and the full-grid approximation spaces, separating grid-based and statistical contributions.

\begin{theorem}[Error estimates]
\label{thm:0}
Let $f\in \mathrm{L}^1(\mathrm{H}^{q}_{\mathrm{mix}}(\Omega);\Omega_{\bm{v}})$ with $p\in\mathbb{N}$, and $0\leq q\leq p+1$, then the following statements hold:
\begin{enumerate}[label=\roman*)]
\item 
\label{thm:0:1}
The grid-based error of the charge density verifies
\begin{align*}
&\| \bias[\Pi_{h}^{(1)} \rho_{N}]\|_{\mathrm{L}^2(\Omega)} \lesssim h^{q}|\log h|^{d-1}\|\rho\|_{\mathrm{H}^{q}_{\mathrm{mix}}(\Omega)}, \\
& \| \bias[\Pi_{h}^{(\infty)} \rho_{N}]\|_{\mathrm{L}^2(\Omega)} \lesssim h^{q}\|\rho\|_{\mathrm{H}^{q}(\Omega)}.
\end{align*}
\item \label{thm:0:2} The statistical error of the charge density verifies
\begin{align*}
&\big\| \var[\Pi_{h}^{(1)} \rho_{N}]^{\frac{1}{2}}\big\|_{\mathrm{L}^1(\Omega)} \lesssim \left(\frac{|\log h|^{d-1}}{Nh}\right)^{\frac{1}{2}}, \\
& \big\| \var[\Pi_{h}^{(\infty)} \rho_{N}]^{\frac{1}{2}}\big\|_{\mathrm{L}^1(\Omega)} \lesssim \left(\frac{1}{Nh^d}\right)^{\frac{1}{2}}.
\end{align*} 
\item \label{thm:1:1}
If $p=1$ and $(\star)=(1)~\text{or }(\infty)$, the grid-based error of the electric field verifies
\[
\| \bias[\Phi_h^{(\star)}]\|_{\mathrm{H}^1(\Omega)} \lesssim h\|\rho\|_{\mathrm{H}^2_{\mathrm{mix}}(\Omega)}.
\]
\item \label{thm:1:2} The statistical error of the electric field verifies
\begin{align*}
&\sum_{i=1}^d \left\|\var[(\bm{\nabla}\Phi_h^{(1)})_i]^{\frac{1}{2}}\right\|_{\mathrm{L}^1(\Omega)}\lesssim \left(\frac{|\log h|^{d-1}}{Nh}\right)^{\frac{1}{2}}, \\
& \sum_{i=1}^d \left\|\var[(\bm{\nabla}\Phi_h^{(\infty)})_i]^{\frac{1}{2}}\right\|_{\mathrm{L}^1(\Omega)}\lesssim \left(\frac{1}{Nh^d}\right)^{\frac{1}{2}}.
\end{align*}
\end{enumerate}
\end{theorem}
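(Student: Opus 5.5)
The four items split naturally into bias and variance parts, so I would treat them separately.

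\emph{Item \ref{thm:0:1}.} Since $\Pi_h^{(\star)}$ is linear and, for $\phi\in V_h^{(\star)}$, $\esp[\langle\rho_N,\phi\rangle]=\int_\Omega\rho\,\phi$ by \cref{lemma:Density:Estimator:Expected:Value}, we get $\esp[\Pi_h^{(\star)}\rho_N]=\Pi_h^{(\star)}\rho$. The grid-based error is therefore exactly $(\operatorname{I}-\Pi_h^{(\star)})\rho$, as noted just before the theorem. The regularity of $f$ in $\mathrm{L}^1(\mathrm{H}^q_{\mathrm{mix}};\Omega_{\bm v})$ passes to $\rho=\int f\,d\bm v$ via Minkowski's integral inequality, which gives $\rho\in\mathrm{H}^q_{\mathrm{mix}}\subset \mathrm{H}^q$. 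The two bounds then follow directly from \cref{lem:7}~\ref{lem:7.2} and \ref{lem:7.1}.

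\emph{Item \ref{thm:0:2}.} Write $\Pi_h^{(\star)}\rho_N(\bm x)=\sum_s w_s K_h(\bm x,\bm x_s)$, where $K_h(\bm x,\bm y)=\sum_{i}\psi_i(\bm x)\psi_i(\bm y)$ is the reproducing kernel of $V_h^{(\star)}$ for an $\mathrm{L}^2$-orthonormal basis $\{\psi_i\}$. Because the positions are i.i.d. and $w_s=\mathcal M/N$,
\[
\var[\Pi_h^{(\star)}\rho_N](\bm x)\le \frac{\mathcal M^2}{N}\int_\Omega K_h(\bm x,\bm y)^2 p(\bm y)\,d\bm y .
\]
Cauchy--Schwarz on the bounded domain gives $\|\var^{1/2}\|_{\mathrm{L}^1}\le \mu(\Omega)^{1/2}\|\var\|_{\mathrm{L}^1}^{1/2}$. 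Integrating in $\bm x$ and using $\int_\Omega K_h(\bm x,\bm y)^2\,d\bm x=K_h(\bm y,\bm y)$, we obtain
\[
\|\var\|_{\mathrm{L}^1}\le \frac{\mathcal M^2}{N}\int_\Omega K_h(\bm y,\bm y)\,p(\bm y)\,d\bm y\le \frac{\mathcal M^2}{N}\,\sup_{\bm y}K_h(\bm y,\bm y).
\]
The key step is the bound $\sup_{\bm y}K_h(\bm y,\bm y)\lesssim \dim V_h^{(\star)}$, which together with \cref{prop:4} gives the stated rates.

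For the full grid this is standard: stability of the B-spline basis (uniform Riesz bounds) and locality give $K_h(\bm y,\bm y)\lesssim h^{-d}$. For the sparse grid I would first try to avoid $\mathrm{L}^\infty$ bounds entirely, since only $\int K_h(\bm y,\bm y)p(\bm y)\,d\bm y$ is needed. Bounding $p$ in $\mathrm{L}^\infty$ (it is continuous since $q>1/2$ in mixed sense; otherwise one simply assumes $p$ bounded) gives
\[
\int_\Omega K_h(\bm y,\bm y)\,p\,d\bm y\le\|p\|_\infty\operatorname{tr}(\Pi_h)=\|p\|_\infty\dim V_h^{(1)}\lesssim |\log h|^{d-1}h^{-1}.
\]
This trace identity sidesteps the non-local nature of the sparse-grid $\mathrm{L}^2$ projection, which I expect to be the main obstacle if one instead tried pointwise kernel decay.

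\emph{Item \ref{thm:1:1}.} Here $\esp[\Phi_{h,N}^{(\star)}]=:\Phi_h$ solves the Galerkin problem with right-hand side $\Pi_h^{(\star)}\rho$. On $V_{h,0}^{(\star)}$ we have $(\Pi_h^{(\star)}\rho,v_h)=(\rho,v_h)$, so $\Phi_h$ is exactly the Galerkin approximation of $\Phi$. Céa's lemma with Poincaré--Wirtinger then bounds $\|\Phi-\Phi_h\|_{\mathrm{H}^1}$ by the best approximation error, which \cref{lem:7}~\ref{lem:7.3} controls by $h\|\Phi\|_{\mathrm{H}^2_{\mathrm{mix}}}$. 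Finally, $\|\Phi\|_{\mathrm{H}^2_{\mathrm{mix}}}\lesssim\|\rho\|_{\mathrm{H}^2_{\mathrm{mix}}}$, by Fourier analysis on the torus: the symbol $\xi_1^2\xi_2^2\cdots/|\xi|^2$ is dominated by the mixed weights of $\rho$.

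\emph{Item \ref{thm:1:2}.} The map $\rho_N\mapsto\bm\nabla\Phi_{h,N}$ is linear. Writing $G_h$ for the discrete solution operator, we have $\bm\nabla\Phi_{h,N}=\sum_s w_s\bm\nabla G_h(\cdot,\bm x_s)$ with $G_h(\cdot,\bm y)\in V_{h,0}$, the discrete Green's function tested against $\delta_{\bm y}$. Repeating the computation of item~\ref{thm:0:2}, the variance in $\mathrm{L}^1$ is controlled by $N^{-1}\int\|\bm\nabla G_h(\cdot,\bm y)\|_{\mathrm{L}^2}^2\,p(\bm y)\,d\bm y$. Discrete stability gives $\|\bm\nabla G_h(\cdot,\bm y)\|_{\mathrm{L}^2}\lesssim\|\Pi_h\delta_{\bm y}\|_{\mathrm{L}^2}$, and $\|\Pi_h\delta_{\bm y}\|_{\mathrm{L}^2}^2=K_h(\bm y,\bm y)$. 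The same trace argument as in item~\ref{thm:0:2} then yields the claimed rates, without exploiting the smoothing effect of inverting the Laplacian, which is consistent with the remark in the introduction that this bound is likely not sharp.
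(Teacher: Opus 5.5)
Your proof is correct. Item i) coincides with the paper's argument, but items ii)--iv) take a genuinely different and more direct route.

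\textbf{Item ii).} The paper works with the combination-technique representation of $\Pi_h^{(\star)}\delta(\cdot-\bm{X})$ from \cref{prop:3}. It reduces the $\mathrm{L}^1$ norm of the second moment to $\sum_{\bm{\ell}}|c_{\bm{\ell}}|\sum_{\bm{j}}\esp[\alpha_{\bm{j}}(\bm{X})]$, using $\|\varphi^p_{h_{\bm{\ell}},\bm{j}}\|_{\mathrm{L}^\infty}\le 1$ and nonnegativity of the coefficients. It then bounds each inner sum by $\prod_i h_{\ell_i}^{-1}$ through an integrated-bias argument, which invokes $\rho\in\mathrm{H}^{p+1}$, and finally counts levels in \cref{lem:4}. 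Your exact trace identity $\int_\Omega K_h(\bm{y},\bm{y})\,d\bm{y}=\dim V_h^{(\star)}(\Omega)$, combined with \cref{prop:4}, replaces all of this. It needs no information on those coefficients (neither their sign nor which of the non-unique combination representations is meant) and no bias estimate. It also gives an explicit constant, and shows that the noise is governed by $(\dim V_h^{(\star)}/N)^{1/2}$ for any subspace, adaptive ones included. The paper's route, in exchange, keeps the level-by-level bookkeeping familiar from the SGCT-PIC analyses.

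\textbf{Item iii).} The paper bounds $\|\esp[\Phi^{(\star)}_{h,N}]-\Phi_h\|_{\mathrm{H}^1}$ by $\|\bias[\Pi_h^{(\star)}\rho_N]\|_{\mathrm{L}^2}$ via coercivity. Your observation that $(\Pi_h^{(\star)}\rho,v_h)_{\mathrm{L}^2}=(\rho,v_h)_{\mathrm{L}^2}$ on $V_{h,0}^{(\star)}$ shows this term is identically zero, which is a sharper statement.

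\textbf{Item iv).} The paper inserts the energy identity into the coefficient bound of \cref{lem:2}. Your discrete Green's function, together with $\|\bm{\nabla}G_h(\cdot,\bm{y})\|_{\mathrm{L}^2}\le C_P\,K_h(\bm{y},\bm{y})^{1/2}$ from Poincar\'e--Wirtinger, reduces the field variance to item ii) in one step.

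\textbf{On the boundedness of the density $p$.} Your assumption that $p$ is bounded is automatic for $q\ge 1$, since $\mathrm{H}^1_{\mathrm{mix}}\hookrightarrow\mathcal{C}^0$. For $q=0$ it can simply be dropped, because the uniform bound you set out to avoid is easy:
\begin{itemize}
\item Split each univariate space into $\mathrm{L}^2$-orthogonal increments $S_{h_k}\ominus S_{h_{k-1}}$. This makes $V_h^{(1)}(\Omega)$ an orthogonal sum of tensor-product increments over $|\bm{k}|_1\le|\log h|$.
\item Hence $K_h(\bm{y},\bm{y})$ is a sum of products of univariate increment kernels. Each of these is dominated by the kernel of $S_{h_k}$, which is $\lesssim 2^{k}$ by Riesz stability of B-splines.
\item This gives $\sup_{\bm{y}}K_h(\bm{y},\bm{y})\lesssim\sum_{|\bm{k}|_1\le|\log h|}2^{|\bm{k}|_1}\lesssim h^{-1}|\log h|^{d-1}$.
\end{itemize}
Note that the non-locality of the sparse-grid projection never enters this argument.
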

\begin{proof}[Proof of \cref{thm:0}]
See \cref{apd:1} of the appendix.
\end{proof}

We now discuss the results and compare them with the error estimates of STD- and SGTC-PIC methods, which can be found, e.g., in ~\cite{bottino15,tranquilli22} for STD-PIC methods, in~\cite{deluzet22,deluzet25} for SGCT-PIC methods, and are recalled in~\cref{prop:1}.

\paragraph{Regularity assumptions}
A first important difference lies in the regularity assumptions required by the different frameworks. Consider the STD-, SGCT-, and HSG-PIC methods using shape or basis functions of degree~$p$. For the HSG-PIC method, optimal convergence rates are obtained when the particle distribution belongs to the mixed Sobolev space $\mathrm{H}^{p+1}_{\mathrm{mix}}(\Omega)$, which requires all mixed derivatives up to order $p+1$ to be square integrable. In contrast, the STD- and SGCT-PIC analyses assume that the particle distribution belongs to the mixed regularity space $\mathrm{X}^{q}_{\mathrm{mix}}(\Omega)$, with $q = p + q_u$, $q_u\geq1$, thereby requiring all mixed derivatives up to order $q$ to be continuous.
A second distinction concerns the norm in which the error is analyzed. For the STD- and SGCT-PIC methods, the error estimates are established in the $\mathrm{L}^\infty$ norm, whereas for the HSG-PIC method they are derived in the $\mathrm{L}^2$ norm.

\paragraph{Grid-based error}
Assuming sufficient regularity, we now compare the grid-based error convergence rates for both methods. For the charge density, the grid-based error exhibits the same asymptotic behavior in both frameworks, scaling as $\mathcal{O}(h^{p+1}|\log h|^{d-1})$ for sparse-grid approximations (HSG-$(1)$, SGCT) and as $\mathcal{O}(h^{p+1})$ for full-grid approximations (HSG-$(\infty)$, SGCT). We prove, for HSG-PIC methods with $p=1$, that the grid-based error in the electric field scales as $\mathcal{O}(h)$ for the two approximation spaces. We do not provide results for HSG with higher spline degrees, as it requires additional technical proofs, but we expect the error to scale as $\mathcal{O}(h^p)$. This conjecture will be confirmed numerically in~\cref{sec:3}.
 
\paragraph{Statistical error}
We now discuss the statistical component error estimates.
The error estimates obtained for the HSG method scale as $\mathcal{O}(|\log h|^{(d-1)/2}(Nh)^{-1/2})$ both for the charge density and electric field.
However, as proved in~\cite{tranquilli22}, the growth of the statistical error has a significantly milder dependence on the mesh size for the electric field than for the charge density.  Indeed, for two dimensional STD-PIC algorithms, it scales as $\mathcal{O}(\sqrt{\log h / N})$. We conjecture that the HSG estimates derived here is not sharp. This is further discussed in the numerical experiments of this paper in~\cref{sec:3}.

%#######################################
\section{Numerical results}
\label{sec:3}
\subsection{General settings}
We restrict our numerical study to the two-dimensional case, the smallest significant dimension for sparse-grid methods.

We recall that $h$ denotes the mesh size and $\Delta t$ the time step. Given an approximation $u_h^\kappa$ of a function $u(t)$ at time $\kappa$, the relative $\mathrm{L}^q$-norm of the error $u_h^\kappa - u(t)$ is given by
\[
\frac{\|u_h^\kappa - u(t)\|_{\mathrm{L}^q}(\Omega)}{\|u(t)\|_{\mathrm{L}^q(\Omega)}}, 
\]
where the integral are decomposed onto the finest mesh, e.g. 
\[
\|u(t)\|_{\mathrm{L}^q(\Omega)} =  \left( \sum_{i_1,i_2} \int_{i_1 h}^{(i_1+\frac{1}{2}) h} \int_{i_2 h}^{(i_2+\frac{1}{2}) h} | u(t,x,y)|^q \, dx\, dy +  \int_{(i_1+\frac{1}{2}) h}^{(i_1+1) h} \int_{(i_2+\frac{1}{2}) h}^{(i_2+1) h} | u(t,x,y)|^q \, dx\, dy \right)^{1/q}
\]
and  approximated using Gauss-Legendre quadrature. The number of quadrature points $n_q$ is chosen according to the spline degree $p$ as $n_q = p + 1$. With a slight abuse of notation, we write $u(t)$ for the spatial function $u(t,\cdot)\colon(x,y)\mapsto u(t,x,y)$, so that $\|u(t)\|_{\mathrm{L}^2(\Omega)}$ involves an integration over the space variables at fixed time $t$; the dependence on $(x,y)$ is written explicitly whenever needed to avoid ambiguity.

We compare different methods and introduce shortcut notations: the standard (STD-)PIC method, the sparse-grid combination technique (SGCT-)PIC method, the hierarchical sparse-grid PIC method with the $\mathrm{L}^2$-based approximation space (HSG-$(1)$) and the full-grid approximation space (HSG-$(\infty)$). For each of the method, we may use different spline degrees $p_q=q$ where $q=1,3,5$.

\paragraph{Methodology}
For the numerical evaluation of our method, we adopt the following methodology: we first investigate the accuracy of the method (\cref{sec:3.1}); then we evaluate its performance (\cref{sec:3.2}).

We isolate the grid-based (\cref{sec:3.1.1}) and statistical (\cref{sec:3.1.2}) components of the errors and evaluate their convergence separately for a smooth solution; then, we investigate the behavior of the global errors (\cref{sec:3.1.3}) both for a smooth and a non-smooth solution.

For our numerical investigation, we select two representative test cases: a solution constructed using the method of manufactured solutions~\cite{tranquilli22}; and the diocotron instability~\cite{driscoll90}. 

\paragraph{Manufactured solutions}
The method of manufactured solutions, as introduced in~\cite{tranquilli22}, provides a rigorous and deterministic strategy for verification of multidimensional, multispecies, electrostatic PIC implementations. The method requires small modifications to the PIC algorithm as introduced in this paper. Both the electrons and ions have to be discretized and followed in the phase space, and the weights of the particles are evolved throughout the simulation. Specifically, the weights of the particles $w_s$ evolve as
\[
\frac{dw_s}{dt}=\frac{S_f(\bm{x}_s(t),\bm{v}_s(t),t)}{\rho^0(\bm{x}_s(0),\bm{v}_s(0))},
\]
where $\rho^0$ the initial charge density, $S_f$ a forcing term given by
\[
S_{f_s} = \frac{\partial f^s}{\partial t}+\bm{v}\cdot \bm{\nabla}_{\bm{x}}f^s + \bm{E}\cdot  \bm{\nabla}_{\bm{v}}f^s ,
\]
and where $s$ denotes the species of the particles, i.e. electrons or ions. Within this framework, an analytical solution can be constructed. We choose a solution constructed from a distribution of electrons and ions given by
\begin{align*}
f^e(\bm{x},\bm{v})&:= \frac{f^0_{\bm{v}}(\bm{v})}{L^2}\left(1-\sin(\pi t)\sin\left(\frac{2\pi x}{L}\right)\sin\left(\frac{2\pi y}{L}\right)\right), \\
f^i(\bm{x},\bm{v})&:= \frac{f^0_{\bm{v}}(\bm{v})}{L^2},  \quad \text{and}~f^0_{\bm{v}}(\bm{v}):=\frac{4}{\pi}v_1^2v_2^2\exp\left(-v_1^2-v_2^2\right).
\end{align*}
The distributions are defined in the spatial periodic domain $\Omega:=(0,L)^2$, with $L=60$, and the velocity domain $\Omega_{\bm{v}}:=\mathbb{R}^2$.
The electric field is given by
\[
\bm{E}(\bm{x})=-\frac{1}{4L\pi}\sin(\pi t)\left(\cos\left(\frac{2\pi x}{L}\right)\sin\left(\frac{2\pi y}{L}\right),\sin\left(\frac{2\pi x}{L}\right)\cos\left(\frac{2\pi y}{L}\right)\right).
\]
The charge of electrons and ions are related as $q_i=-q_e=1$.
\paragraph{Diocotron instability}
We consider a radially symmetric Gaussian ring as the initial spatial distribution of electrons, defined by
\begin{align}
\label{eq:6}
f^0_{\bm{x}}(\bm{x})=\alpha \exp\left(-\frac{(\|\bm{x}-\frac{L}{2}\|_2-\frac{L}{4})^2}{2(\beta L)^2} \right), \quad \text{with}~\beta = 0.03~ \text{and}~\alpha ~\text{s.t.} \int_{\Omega}f^0_{\bm{x}} dxdy =1,
\end{align}
along with a Maxwellian velocity distribution given by
\begin{align}
\label{eq:7}
f^0_{\bm{v}}(\bm{v}) = \left(\frac{1}{\sqrt{\pi} v_T}\right)^3 \exp\Big(-\frac{\|\bm{v}\|_2^2}{v_T^2}\Big), \quad v_T = \sqrt{2 T_e q_e / m_e}.
\end{align}
Here, $\|\cdot\|_2$ denotes the Euclidean norm. The small value of $\beta$ ensures a narrow Gaussian ring with strong spatial variations that are not aligned with the axes, a scenario known to be very challenging for sparse-grid methods.
The domain size is set to $L=60$, and an external uniform magnetic field is applied along the $z$-axis, $\bm{B}_0 = (0,0,B_z)$, with $B_z=15$. The magnetic field is sufficiently strong such that the electron dynamics is dominated by advection in the self-consistent $\bm{E}\times \bm{B}_0$ field~\cite{driscoll90}.  

The magnetic field induces an instability that deforms the initially radially symmetric electron density, forming vortices. Such dynamics are particularly demanding for sparse-grid approximations~\cite{ricketson17,muralikrishnan21,deluzet22,deluzet22-1}, as strong gradients in mixed directions (xy-plane) develop, and the solution exhibits fine-scale, non-aligned structures. The strong magnetic field also imposes a constraint on the time step: it must be smaller than the electron gyroperiod, with cyclotron frequency $\Omega_c = B_z$. We set $\Delta t = 0.02$ to satisfy $\Omega_c \Delta t \leq 1$.

\paragraph{Grid heating}
The so-called finite grid instability~\cite{langdon70} is a well-known numerical instability of PIC simulations that arises from the mismatch between the discrete Eulerian grid used for field computations and the continuous Lagrangian particle representation~\cite{huang23}. This often manifests as an unphysical plasma heating due to an accumulation of particle kinetic energy over time and is influenced by the numerical discretization parameters. To mitigate this effect, standard PIC algorithm guidelines require the grid spacing $h$ to be smaller than or equal to the Debye length $\lambda_D$, even if the physical scales of interest are larger. In our configuration, the mesh size has to verify $h\leq 1/L$.

\subsection{Convergence and accuracy}
\label{sec:3.1}
In this section, we investigate the convergence of the method and compare the numerical results with the theoretical bounds derived in \cref{thm:0}.  We focus on the convergence according to the spatial discretization; therefore we fix the time discretization to a value such that the global error is dominated by the grid-based or the statistical error. Our numerical investigations have shown that $\Delta t =0.02$ is sufficient so that we fix the time step to this value in the following.

\subsubsection{Grid-based error convergence}
\label{sec:3.1.1}
We first investigate the convergence of the grid-based component of the error, which depends on two discretization parameters: the mesh size $h$ and spline degree $p$. Since the global error of PIC methods is largely dominated by the statistical error (especially for fine meshes and high-degree splines), observing the theoretical convergence orders requires a tremendous amount of numerical particles. 
\Fabrice{To isolate and analyze the structural approximation error, we eliminate the statistical noise by replacing the particle-based estimator $\rho_N$ with the exact continuous charge density $\rho$. Consequently, within the scope of~\cref{sec:3.1.1}, the discrete variational formulation \eqref{eq:5} is modified as follows:
\[
\text{Find } \Phi_{h,N}^{(\star)} \in V_{h,0}^{(\star)}(\Omega) \quad \text{such that} \quad 
a(\Phi_{h,N}^{(\star)}, v_h) = (\Pi_{h}^{(\star)} \rho , v_h)_{\mathrm{L}^2(\Omega)}, \quad \forall v_h \in V_{h,0}^{(\star)}(\Omega),
\]
where the right-hand side linear functional is now evaluated using the exact density $\rho$ instead of the raw Monte Carlo estimator $\rho_N$.}
\paragraph{Manufactured solutions}
We first consider the solution constructed with the method of manufactured solution and we analyze the errors at time $t=0.5$.

The $\mathrm{L}^2$ norm error of the charge density approximation is plotted in the left panel of \cref{fig:1} as a function of $h$ for different spline degrees $p$ and approximations spaces.

\begin{center}
\begin{center}
  \resizebox{0.9\textwidth}{!}{
	% GNUPLOT: LaTeX picture with Postscript
\begingroup
  \makeatletter
  \providecommand\color[2][]{%
    \GenericError{(gnuplot) \space\space\space\@spaces}{%
      Package color not loaded in conjunction with
      terminal option `colourtext'%
    }{See the gnuplot documentation for explanation.%
    }{Either use 'blacktext' in gnuplot or load the package
      color.sty in LaTeX.}%
    \renewcommand\color[2][]{}%
  }%
  \providecommand\includegraphics[2][]{%
    \GenericError{(gnuplot) \space\space\space\@spaces}{%
      Package graphicx or graphics not loaded%
    }{See the gnuplot documentation for explanation.%
    }{The gnuplot epslatex terminal needs graphicx.sty or graphics.sty.}%
    \renewcommand\includegraphics[2][]{}%
  }%
  \providecommand\rotatebox[2]{#2}%
  \@ifundefined{ifGPcolor}{%
    \newif\ifGPcolor
    \GPcolortrue
  }{}%
  \@ifundefined{ifGPblacktext}{%
    \newif\ifGPblacktext
    \GPblacktexttrue
  }{}%
  % define a \g@addto@macro without @ in the name:
  \let\gplgaddtomacro\g@addto@macro
  % define empty templates for all commands taking text:
  \gdef\gplbacktext{}%
  \gdef\gplfronttext{}%
  \makeatother
  \ifGPblacktext
    % no textcolor at all
    \def\colorrgb#1{}%
    \def\colorgray#1{}%
  \else
    % gray or color?
    \ifGPcolor
      \def\colorrgb#1{\color[rgb]{#1}}%
      \def\colorgray#1{\color[gray]{#1}}%
      \expandafter\def\csname LTw\endcsname{\color{white}}%
      \expandafter\def\csname LTb\endcsname{\color{black}}%
      \expandafter\def\csname LTa\endcsname{\color{black}}%
      \expandafter\def\csname LT0\endcsname{\color[rgb]{1,0,0}}%
      \expandafter\def\csname LT1\endcsname{\color[rgb]{0,1,0}}%
      \expandafter\def\csname LT2\endcsname{\color[rgb]{0,0,1}}%
      \expandafter\def\csname LT3\endcsname{\color[rgb]{1,0,1}}%
      \expandafter\def\csname LT4\endcsname{\color[rgb]{0,1,1}}%
      \expandafter\def\csname LT5\endcsname{\color[rgb]{1,1,0}}%
      \expandafter\def\csname LT6\endcsname{\color[rgb]{0,0,0}}%
      \expandafter\def\csname LT7\endcsname{\color[rgb]{1,0.3,0}}%
      \expandafter\def\csname LT8\endcsname{\color[rgb]{0.5,0.5,0.5}}%
    \else
      % gray
      \def\colorrgb#1{\color{black}}%
      \def\colorgray#1{\color[gray]{#1}}%
      \expandafter\def\csname LTw\endcsname{\color{white}}%
      \expandafter\def\csname LTb\endcsname{\color{black}}%
      \expandafter\def\csname LTa\endcsname{\color{black}}%
      \expandafter\def\csname LT0\endcsname{\color{black}}%
      \expandafter\def\csname LT1\endcsname{\color{black}}%
      \expandafter\def\csname LT2\endcsname{\color{black}}%
      \expandafter\def\csname LT3\endcsname{\color{black}}%
      \expandafter\def\csname LT4\endcsname{\color{black}}%
      \expandafter\def\csname LT5\endcsname{\color{black}}%
      \expandafter\def\csname LT6\endcsname{\color{black}}%
      \expandafter\def\csname LT7\endcsname{\color{black}}%
      \expandafter\def\csname LT8\endcsname{\color{black}}%
    \fi
  \fi
    \setlength{\unitlength}{0.0500bp}%
    \ifx\gptboxheight\undefined%
      \newlength{\gptboxheight}%
      \newlength{\gptboxwidth}%
      \newsavebox{\gptboxtext}%
    \fi%
    \setlength{\fboxrule}{0.5pt}%
    \setlength{\fboxsep}{1pt}%
    \definecolor{tbcol}{rgb}{1,1,1}%
\begin{picture}(4020.00,3160.00)%
    \gplgaddtomacro\gplbacktext{%
      \csname LTb\endcsname%%
      \put(615,639){\makebox(0,0)[r]{\strut{}\small$10^{-9}$}}%
      \csname LTb\endcsname%%
      \put(615,902){\makebox(0,0)[r]{\strut{}\small$10^{-8}$}}%
      \csname LTb\endcsname%%
      \put(615,1165){\makebox(0,0)[r]{\strut{}\small$10^{-7}$}}%
      \csname LTb\endcsname%%
      \put(615,1428){\makebox(0,0)[r]{\strut{}\small$10^{-6}$}}%
      \csname LTb\endcsname%%
      \put(615,1691){\makebox(0,0)[r]{\strut{}\small$10^{-5}$}}%
      \csname LTb\endcsname%%
      \put(615,1954){\makebox(0,0)[r]{\strut{}\small$10^{-4}$}}%
      \csname LTb\endcsname%%
      \put(615,2217){\makebox(0,0)[r]{\strut{}\small$10^{-3}$}}%
      \csname LTb\endcsname%%
      \put(615,2480){\makebox(0,0)[r]{\strut{}\small$10^{-2}$}}%
      \csname LTb\endcsname%%
      \put(615,2743){\makebox(0,0)[r]{\strut{}\small$10^{-1}$}}%
      \csname LTb\endcsname%%
      \put(615,3006){\makebox(0,0)[r]{\strut{}\small$10^{0}$}}%
      \csname LTb\endcsname%%
      \put(1278,426){\makebox(0,0){\strut{}$10^{-2}$}}%
      \csname LTb\endcsname%%
      \put(2119,426){\makebox(0,0){\strut{}$10^{-1}$}}%
      \csname LTb\endcsname%%
      \put(2961,426){\makebox(0,0){\strut{}$\small 10^{0}$}}%
    }%
    \gplgaddtomacro\gplfronttext{%
      \csname LTb\endcsname%%
      \put(3612,2845){\makebox(0,0)[r]{\strut{}\footnotesize$h^{2}$}}%
      \csname LTb\endcsname%%
      \put(3612,2698){\makebox(0,0)[r]{\strut{}\footnotesize$h^{4}$}}%
      \csname LTb\endcsname%%
      \put(3612,2552){\makebox(0,0)[r]{\strut{}\footnotesize$h^{6}$}}%
      \csname LTb\endcsname%%
      \put(3612,2405){\makebox(0,0)[r]{\strut{}\footnotesize$(1),p_1$}}%
      \csname LTb\endcsname%%
      \put(3612,2258){\makebox(0,0)[r]{\strut{}\footnotesize$(\infty),p_1$}}%
      \csname LTb\endcsname%%
      \put(3612,2112){\makebox(0,0)[r]{\strut{}\footnotesize$(1),p_3$}}%
      \csname LTb\endcsname%%
      \put(3612,1965){\makebox(0,0)[r]{\strut{}\footnotesize$(\infty),p_3$}}%
      \csname LTb\endcsname%%
      \put(3612,1819){\makebox(0,0)[r]{\strut{}\footnotesize$(1),p_5$}}%
      \csname LTb\endcsname%%
      \put(3612,1672){\makebox(0,0)[r]{\strut{}\footnotesize$(\infty),p_5$}}%
      \csname LTb\endcsname%%
      \put(48,1783){\rotatebox{-270.00}{\makebox(0,0){\strut{}$\|\Pi_{h}^{(\star)} \rho(t) - \rho(t))\|_{\mathrm{L}^2(\Omega)}$}}}%
      \csname LTb\endcsname%%
      \put(1825,93){\makebox(0,0){\strut{}Mesh size $h$}}%
    }%
    \gplbacktext
    \put(0,0){\includegraphics[width={201.00bp},height={158.00bp}]{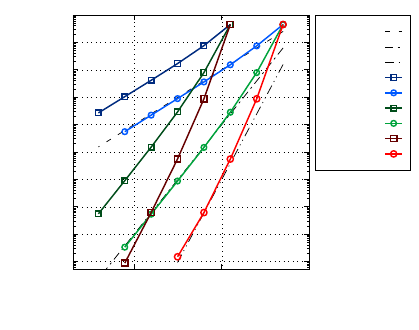}}%
    \gplfronttext
  \end{picture}%
\endgroup
 \hspace*{1em}
	% GNUPLOT: LaTeX picture with Postscript
\begingroup
  \makeatletter
  \providecommand\color[2][]{%
    \GenericError{(gnuplot) \space\space\space\@spaces}{%
      Package color not loaded in conjunction with
      terminal option `colourtext'%
    }{See the gnuplot documentation for explanation.%
    }{Either use 'blacktext' in gnuplot or load the package
      color.sty in LaTeX.}%
    \renewcommand\color[2][]{}%
  }%
  \providecommand\includegraphics[2][]{%
    \GenericError{(gnuplot) \space\space\space\@spaces}{%
      Package graphicx or graphics not loaded%
    }{See the gnuplot documentation for explanation.%
    }{The gnuplot epslatex terminal needs graphicx.sty or graphics.sty.}%
    \renewcommand\includegraphics[2][]{}%
  }%
  \providecommand\rotatebox[2]{#2}%
  \@ifundefined{ifGPcolor}{%
    \newif\ifGPcolor
    \GPcolortrue
  }{}%
  \@ifundefined{ifGPblacktext}{%
    \newif\ifGPblacktext
    \GPblacktexttrue
  }{}%
  % define a \g@addto@macro without @ in the name:
  \let\gplgaddtomacro\g@addto@macro
  % define empty templates for all commands taking text:
  \gdef\gplbacktext{}%
  \gdef\gplfronttext{}%
  \makeatother
  \ifGPblacktext
    % no textcolor at all
    \def\colorrgb#1{}%
    \def\colorgray#1{}%
  \else
    % gray or color?
    \ifGPcolor
      \def\colorrgb#1{\color[rgb]{#1}}%
      \def\colorgray#1{\color[gray]{#1}}%
      \expandafter\def\csname LTw\endcsname{\color{white}}%
      \expandafter\def\csname LTb\endcsname{\color{black}}%
      \expandafter\def\csname LTa\endcsname{\color{black}}%
      \expandafter\def\csname LT0\endcsname{\color[rgb]{1,0,0}}%
      \expandafter\def\csname LT1\endcsname{\color[rgb]{0,1,0}}%
      \expandafter\def\csname LT2\endcsname{\color[rgb]{0,0,1}}%
      \expandafter\def\csname LT3\endcsname{\color[rgb]{1,0,1}}%
      \expandafter\def\csname LT4\endcsname{\color[rgb]{0,1,1}}%
      \expandafter\def\csname LT5\endcsname{\color[rgb]{1,1,0}}%
      \expandafter\def\csname LT6\endcsname{\color[rgb]{0,0,0}}%
      \expandafter\def\csname LT7\endcsname{\color[rgb]{1,0.3,0}}%
      \expandafter\def\csname LT8\endcsname{\color[rgb]{0.5,0.5,0.5}}%
    \else
      % gray
      \def\colorrgb#1{\color{black}}%
      \def\colorgray#1{\color[gray]{#1}}%
      \expandafter\def\csname LTw\endcsname{\color{white}}%
      \expandafter\def\csname LTb\endcsname{\color{black}}%
      \expandafter\def\csname LTa\endcsname{\color{black}}%
      \expandafter\def\csname LT0\endcsname{\color{black}}%
      \expandafter\def\csname LT1\endcsname{\color{black}}%
      \expandafter\def\csname LT2\endcsname{\color{black}}%
      \expandafter\def\csname LT3\endcsname{\color{black}}%
      \expandafter\def\csname LT4\endcsname{\color{black}}%
      \expandafter\def\csname LT5\endcsname{\color{black}}%
      \expandafter\def\csname LT6\endcsname{\color{black}}%
      \expandafter\def\csname LT7\endcsname{\color{black}}%
      \expandafter\def\csname LT8\endcsname{\color{black}}%
    \fi
  \fi
    \setlength{\unitlength}{0.0500bp}%
    \ifx\gptboxheight\undefined%
      \newlength{\gptboxheight}%
      \newlength{\gptboxwidth}%
      \newsavebox{\gptboxtext}%
    \fi%
    \setlength{\fboxrule}{0.5pt}%
    \setlength{\fboxsep}{1pt}%
    \definecolor{tbcol}{rgb}{1,1,1}%
\begin{picture}(4020.00,3160.00)%
    \gplgaddtomacro\gplbacktext{%
      \csname LTb\endcsname%%
      \put(615,559){\makebox(0,0)[r]{\strut{}\small$10^{-9}$}}%
      \csname LTb\endcsname%%
      \put(615,831){\makebox(0,0)[r]{\strut{}\small$10^{-8}$}}%
      \csname LTb\endcsname%%
      \put(615,1103){\makebox(0,0)[r]{\strut{}\small$10^{-7}$}}%
      \csname LTb\endcsname%%
      \put(615,1375){\makebox(0,0)[r]{\strut{}\small$10^{-6}$}}%
      \csname LTb\endcsname%%
      \put(615,1647){\makebox(0,0)[r]{\strut{}\small$10^{-5}$}}%
      \csname LTb\endcsname%%
      \put(615,1919){\makebox(0,0)[r]{\strut{}\small$10^{-4}$}}%
      \csname LTb\endcsname%%
      \put(615,2191){\makebox(0,0)[r]{\strut{}\small$10^{-3}$}}%
      \csname LTb\endcsname%%
      \put(615,2462){\makebox(0,0)[r]{\strut{}\small$10^{-2}$}}%
      \csname LTb\endcsname%%
      \put(615,2734){\makebox(0,0)[r]{\strut{}\small$10^{-1}$}}%
      \csname LTb\endcsname%%
      \put(615,3006){\makebox(0,0)[r]{\strut{}\small$10^{0}$}}%
      \csname LTb\endcsname%%
      \put(1278,426){\makebox(0,0){\strut{}$10^{-2}$}}%
      \csname LTb\endcsname%%
      \put(2119,426){\makebox(0,0){\strut{}$10^{-1}$}}%
      \csname LTb\endcsname%%
      \put(2961,426){\makebox(0,0){\strut{}$\small 10^{0}$}}%
    }%
    \gplgaddtomacro\gplfronttext{%
      \csname LTb\endcsname%%
      \put(3612,2845){\makebox(0,0)[r]{\strut{}\footnotesize $h$}}%
      \csname LTb\endcsname%%
      \put(3612,2698){\makebox(0,0)[r]{\strut{}\footnotesize $h^3$}}%
      \csname LTb\endcsname%%
      \put(3612,2552){\makebox(0,0)[r]{\strut{}\footnotesize$h^{5}$}}%
      \csname LTb\endcsname%%
      \put(3612,2405){\makebox(0,0)[r]{\strut{}\footnotesize$(1),p_1$}}%
      \csname LTb\endcsname%%
      \put(3612,2258){\makebox(0,0)[r]{\strut{}\footnotesize$(\infty),p_1$}}%
      \csname LTb\endcsname%%
      \put(3612,2112){\makebox(0,0)[r]{\strut{}\footnotesize$(1),p_3$}}%
      \csname LTb\endcsname%%
      \put(3612,1965){\makebox(0,0)[r]{\strut{}\footnotesize$(\infty),p_3$}}%
      \csname LTb\endcsname%%
      \put(3612,1819){\makebox(0,0)[r]{\strut{}\footnotesize$(1),p_5$}}%
      \csname LTb\endcsname%%
      \put(3612,1672){\makebox(0,0)[r]{\strut{}\footnotesize$(\infty),p_5$}}%
      \csname LTb\endcsname%%
      \put(48,1783){\rotatebox{-270.00}{\makebox(0,0){\strut{}$|\Phi_{h}^{(\star)}(t)- \Phi(t))|_{\mathrm{H}^1(\Omega)}$}}}%
      \csname LTb\endcsname%%
      \put(1825,93){\makebox(0,0){\strut{}Mesh size $h$}}%
    }%
    \gplbacktext
    \put(0,0){\includegraphics[width={201.00bp},height={158.00bp}]{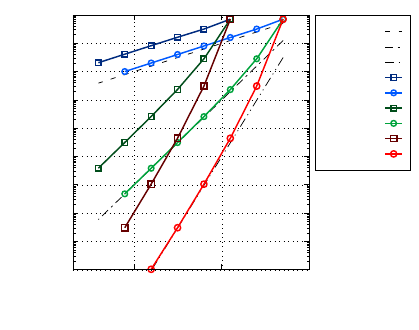}}%
    \gplfronttext
  \end{picture}%
\endgroup
}
 \end{center}
 \captionof{figure}{$\mathrm{L}^2$ norm and $\mathrm{H}^1$ semi-norm approximation error of the charge density (left) and electric field (right) for the manufactured solutions as functions of the mesh size $h$, for different spline degrees $p$, and at time $t=0.5$.}
\label{fig:1}
\end{center}

%First, we observe that the approximations with the truncated sparse-grid spaces do not converge until a sufficiently fine mesh size is achieved (around $h= 2^4$ or $2^5$). The reason for this behavior can be explained as follows. The variations of this specific electron distribution requires a discretization of at least $h=2^2$ in both directions to be well captured and the subspace $W_{h_{(2,2)}}$ is only included the sparse-grid spaces verifying $h\ge 2^n$, with $n=4$ for the $\mathrm{L}^2$-based space and $n=5$ for the $\mathrm{H}^1$-based space. 

For the two approximation spaces, the charge density projection error scales as $\mathcal{O}(h^{p+1})$. These results are consistent with the theoretical bounds of \cref{thm:0}, \cref{thm:0:1} for the full-grid approximation space and better than the estimates predicted for the sparse-grid space as the convergence order does not include the additional logarithmic term. This super-convergence behavior results from the high regularity of the solution in the mixed derivatives. However, we observe a significant difference in the constants between the approximation spaces, which results in a consequent loss of accuracy for the sparse-grid spaces compared to the full-grid space. This deterioration of accuracy can be mitigated by increasing the degree of the B-splines functions.

The $\mathrm{H}^1$ semi-norm error of the electric potential is plotted in the right panel of \cref{fig:1} as a function of the mesh size $h$ for different spline degrees $p$. 
For both approximation spaces, the error converges as $\mathcal{O}(h^{p})$, which is consistent with the error bound of \cref{thm:0}, \cref{thm:1:1} obtained for linear basis functions. As with the charge density, increasing the spline degree yields a higher convergence order for all approximation spaces.

\subsubsection{Statistical error growth}
\label{sec:3.1.2}
We now investigate the statistical component of the error which may depend on three discretization parameters: the mesh size $h$, the spline degree $p$ and the (total) number of particles $N$. 

Preliminary numerical experiments, omitted here for brevity, confirmed that the error scales as $\mathcal{O}(1/\sqrt{N})$ for all methods given a fixed mesh size. These empirical results are consistent with the theoretical bounds derived in \cref{thm:0}, \cref{thm:0:2}, \cref{thm:1:2}, as well as \cref{prop:1}.
This behavior is expected, as all the considered methods rely at some stage on a Monte Carlo sampling of the particle distribution, an approach whose statistical error is well known to scale with the inverse square root of the number of samples.

A more determining property to investigate is the growth of the statistical noise when refining the mesh for a fixed number of particles. Indeed, in PIC algorithms, the statistical noise usually scales as the mean number of particles per cell~\cite{bottino15}, that is $\mathcal{O}(\sqrt{1/Nh^d})$ for STD-PIC methods, under the assumption that the particles are uniformly distributed over the spatial domain. This sampling error actually corresponds to the variance of the Monte-Carlo estimator of the charge density.  
\Fabrice{Owing to the regularizing properties of the Poisson operator, solving the discrete Laplacian acts as an intrinsic spatial low-pass filter on particle noise. In Fourier space, the inverse Laplacian scales as $\mathcal{O}(k^{-2})$, which significantly dampens high-frequency fluctuations present in the particle-derived charge density $\rho$ when computing the electrostatic potential $\phi$ and the electric field $\bm{E}$. Consequently, the numerical noise in $\phi$ and $\bm{E}$ is smoother and substantially reduced compared to that of $\rho$. This regularization mechanism is fully aligned with the theoretical sampling error scalings established by Tranquilli \textit{et al.}~\cite{tranquilli22}, who showed that field quantities exhibit lower stochastic variance than density moments, namely $\mathcal{O}(\sqrt{\log h / N})$ for two dimensional problems, due to this spectral decay.}

For all the experiments presented in this section, we have performed $15$ simulations with different random seeds to draw the initial distribution of particles. The results presented display both the mean and the standard deviation of the $15$ realizations.

\paragraph{Manufactured solutions}
We restrict the investigation to the manufactured solution test case. Similar results (although with different constants) have been obtained considering the diocotron instability test case. We fix the number of particles to $N=1{,}000$, so that, even for coarse mesh sizes, the error is dominated by the statistical component. We analyze the errors at time $t=0.5$.

The $\mathrm{L}^2$ norm and $\mathrm{H}^1$ semi-norm of the charge density and electric field approximation errors are plotted as functions of the mesh size in~\cref{fig:2}, considering linear B-splines ($p_1$) on the left panels and cubic B-splines ($p_3$) on the right panels.

\begin{center}
\begin{center}
  \resizebox{0.9\textwidth}{!}{
  % GNUPLOT: LaTeX picture with Postscript
\begingroup
  \makeatletter
  \providecommand\color[2][]{%
    \GenericError{(gnuplot) \space\space\space\@spaces}{%
      Package color not loaded in conjunction with
      terminal option `colourtext'%
    }{See the gnuplot documentation for explanation.%
    }{Either use 'blacktext' in gnuplot or load the package
      color.sty in LaTeX.}%
    \renewcommand\color[2][]{}%
  }%
  \providecommand\includegraphics[2][]{%
    \GenericError{(gnuplot) \space\space\space\@spaces}{%
      Package graphicx or graphics not loaded%
    }{See the gnuplot documentation for explanation.%
    }{The gnuplot epslatex terminal needs graphicx.sty or graphics.sty.}%
    \renewcommand\includegraphics[2][]{}%
  }%
  \providecommand\rotatebox[2]{#2}%
  \@ifundefined{ifGPcolor}{%
    \newif\ifGPcolor
    \GPcolortrue
  }{}%
  \@ifundefined{ifGPblacktext}{%
    \newif\ifGPblacktext
    \GPblacktexttrue
  }{}%
  % define a \g@addto@macro without @ in the name:
  \let\gplgaddtomacro\g@addto@macro
  % define empty templates for all commands taking text:
  \gdef\gplbacktext{}%
  \gdef\gplfronttext{}%
  \makeatother
  \ifGPblacktext
    % no textcolor at all
    \def\colorrgb#1{}%
    \def\colorgray#1{}%
  \else
    % gray or color?
    \ifGPcolor
      \def\colorrgb#1{\color[rgb]{#1}}%
      \def\colorgray#1{\color[gray]{#1}}%
      \expandafter\def\csname LTw\endcsname{\color{white}}%
      \expandafter\def\csname LTb\endcsname{\color{black}}%
      \expandafter\def\csname LTa\endcsname{\color{black}}%
      \expandafter\def\csname LT0\endcsname{\color[rgb]{1,0,0}}%
      \expandafter\def\csname LT1\endcsname{\color[rgb]{0,1,0}}%
      \expandafter\def\csname LT2\endcsname{\color[rgb]{0,0,1}}%
      \expandafter\def\csname LT3\endcsname{\color[rgb]{1,0,1}}%
      \expandafter\def\csname LT4\endcsname{\color[rgb]{0,1,1}}%
      \expandafter\def\csname LT5\endcsname{\color[rgb]{1,1,0}}%
      \expandafter\def\csname LT6\endcsname{\color[rgb]{0,0,0}}%
      \expandafter\def\csname LT7\endcsname{\color[rgb]{1,0.3,0}}%
      \expandafter\def\csname LT8\endcsname{\color[rgb]{0.5,0.5,0.5}}%
    \else
      % gray
      \def\colorrgb#1{\color{black}}%
      \def\colorgray#1{\color[gray]{#1}}%
      \expandafter\def\csname LTw\endcsname{\color{white}}%
      \expandafter\def\csname LTb\endcsname{\color{black}}%
      \expandafter\def\csname LTa\endcsname{\color{black}}%
      \expandafter\def\csname LT0\endcsname{\color{black}}%
      \expandafter\def\csname LT1\endcsname{\color{black}}%
      \expandafter\def\csname LT2\endcsname{\color{black}}%
      \expandafter\def\csname LT3\endcsname{\color{black}}%
      \expandafter\def\csname LT4\endcsname{\color{black}}%
      \expandafter\def\csname LT5\endcsname{\color{black}}%
      \expandafter\def\csname LT6\endcsname{\color{black}}%
      \expandafter\def\csname LT7\endcsname{\color{black}}%
      \expandafter\def\csname LT8\endcsname{\color{black}}%
    \fi
  \fi
    \setlength{\unitlength}{0.0500bp}%
    \ifx\gptboxheight\undefined%
      \newlength{\gptboxheight}%
      \newlength{\gptboxwidth}%
      \newsavebox{\gptboxtext}%
    \fi%
    \setlength{\fboxrule}{0.5pt}%
    \setlength{\fboxsep}{1pt}%
    \definecolor{tbcol}{rgb}{1,1,1}%
\begin{picture}(4020.00,3160.00)%
    \gplgaddtomacro\gplbacktext{%
      \csname LTb\endcsname%%
      \put(615,559){\makebox(0,0)[r]{\strut{}\small$10^{-1}$}}%
      \csname LTb\endcsname%%
      \put(615,1783){\makebox(0,0)[r]{\strut{}\small$10^{0}$}}%
      \csname LTb\endcsname%%
      \put(615,3006){\makebox(0,0)[r]{\strut{}\small$10^{1}$}}%
      \csname LTb\endcsname%%
      \put(1563,426){\makebox(0,0){\strut{}$10^{-2}$}}%
      \csname LTb\endcsname%%
      \put(2812,426){\makebox(0,0){\strut{}$10^{-1}$}}%
    }%
    \gplgaddtomacro\gplfronttext{%
      \csname LTb\endcsname%%
      \put(3612,2845){\makebox(0,0)[r]{\strut{} }}%
      \csname LTb\endcsname%%
      \put(3612,2698){\makebox(0,0)[r]{\strut{}\footnotesize$\sqrt{\frac{1}{Nh^2}}$}}%
      \csname LTb\endcsname%%
      \put(3612,2552){\makebox(0,0)[r]{\strut{} }}%
      \csname LTb\endcsname%%
      \put(3612,2405){\makebox(0,0)[r]{\strut{}\footnotesize$\sqrt{\frac{\log h}{Nh}}$}}%
      \csname LTb\endcsname%%
      \put(3612,2258){\makebox(0,0)[r]{\strut{} }}%
      \csname LTb\endcsname%%
      \put(3612,2112){\makebox(0,0)[r]{\strut{}\footnotesize$(1),p_1$}}%
      \csname LTb\endcsname%%
      \put(3612,1965){\makebox(0,0)[r]{\strut{}\footnotesize$(\infty),p_1$}}%
      \csname LTb\endcsname%%
      \put(3612,1819){\makebox(0,0)[r]{\strut{}\footnotesize SGCT,$p_1$}}%
      \csname LTb\endcsname%%
      \put(3612,1672){\makebox(0,0)[r]{\strut{}\footnotesize STD,$p_1$}}%
      \csname LTb\endcsname%%
      \put(48,1783){\rotatebox{-270.00}{\makebox(0,0){\strut{}$\|\rho_h^{(\star),\tau} - \rho(t))\|_{\mathrm{L}^2(\Omega)}$}}}%
      \csname LTb\endcsname%%
      \put(1751,93){\makebox(0,0){\strut{}Mesh size $h$}}%
    }%
    \gplbacktext
    \put(0,0){\includegraphics[width={201.00bp},height={158.00bp}]{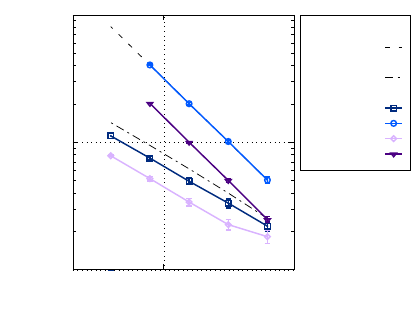}}%
    \gplfronttext
  \end{picture}%
\endgroup
\hspace*{1em}% GNUPLOT: LaTeX picture with Postscript
\begingroup
  \makeatletter
  \providecommand\color[2][]{%
    \GenericError{(gnuplot) \space\space\space\@spaces}{%
      Package color not loaded in conjunction with
      terminal option `colourtext'%
    }{See the gnuplot documentation for explanation.%
    }{Either use 'blacktext' in gnuplot or load the package
      color.sty in LaTeX.}%
    \renewcommand\color[2][]{}%
  }%
  \providecommand\includegraphics[2][]{%
    \GenericError{(gnuplot) \space\space\space\@spaces}{%
      Package graphicx or graphics not loaded%
    }{See the gnuplot documentation for explanation.%
    }{The gnuplot epslatex terminal needs graphicx.sty or graphics.sty.}%
    \renewcommand\includegraphics[2][]{}%
  }%
  \providecommand\rotatebox[2]{#2}%
  \@ifundefined{ifGPcolor}{%
    \newif\ifGPcolor
    \GPcolortrue
  }{}%
  \@ifundefined{ifGPblacktext}{%
    \newif\ifGPblacktext
    \GPblacktexttrue
  }{}%
  % define a \g@addto@macro without @ in the name:
  \let\gplgaddtomacro\g@addto@macro
  % define empty templates for all commands taking text:
  \gdef\gplbacktext{}%
  \gdef\gplfronttext{}%
  \makeatother
  \ifGPblacktext
    % no textcolor at all
    \def\colorrgb#1{}%
    \def\colorgray#1{}%
  \else
    % gray or color?
    \ifGPcolor
      \def\colorrgb#1{\color[rgb]{#1}}%
      \def\colorgray#1{\color[gray]{#1}}%
      \expandafter\def\csname LTw\endcsname{\color{white}}%
      \expandafter\def\csname LTb\endcsname{\color{black}}%
      \expandafter\def\csname LTa\endcsname{\color{black}}%
      \expandafter\def\csname LT0\endcsname{\color[rgb]{1,0,0}}%
      \expandafter\def\csname LT1\endcsname{\color[rgb]{0,1,0}}%
      \expandafter\def\csname LT2\endcsname{\color[rgb]{0,0,1}}%
      \expandafter\def\csname LT3\endcsname{\color[rgb]{1,0,1}}%
      \expandafter\def\csname LT4\endcsname{\color[rgb]{0,1,1}}%
      \expandafter\def\csname LT5\endcsname{\color[rgb]{1,1,0}}%
      \expandafter\def\csname LT6\endcsname{\color[rgb]{0,0,0}}%
      \expandafter\def\csname LT7\endcsname{\color[rgb]{1,0.3,0}}%
      \expandafter\def\csname LT8\endcsname{\color[rgb]{0.5,0.5,0.5}}%
    \else
      % gray
      \def\colorrgb#1{\color{black}}%
      \def\colorgray#1{\color[gray]{#1}}%
      \expandafter\def\csname LTw\endcsname{\color{white}}%
      \expandafter\def\csname LTb\endcsname{\color{black}}%
      \expandafter\def\csname LTa\endcsname{\color{black}}%
      \expandafter\def\csname LT0\endcsname{\color{black}}%
      \expandafter\def\csname LT1\endcsname{\color{black}}%
      \expandafter\def\csname LT2\endcsname{\color{black}}%
      \expandafter\def\csname LT3\endcsname{\color{black}}%
      \expandafter\def\csname LT4\endcsname{\color{black}}%
      \expandafter\def\csname LT5\endcsname{\color{black}}%
      \expandafter\def\csname LT6\endcsname{\color{black}}%
      \expandafter\def\csname LT7\endcsname{\color{black}}%
      \expandafter\def\csname LT8\endcsname{\color{black}}%
    \fi
  \fi
    \setlength{\unitlength}{0.0500bp}%
    \ifx\gptboxheight\undefined%
      \newlength{\gptboxheight}%
      \newlength{\gptboxwidth}%
      \newsavebox{\gptboxtext}%
    \fi%
    \setlength{\fboxrule}{0.5pt}%
    \setlength{\fboxsep}{1pt}%
    \definecolor{tbcol}{rgb}{1,1,1}%
\begin{picture}(4020.00,3160.00)%
    \gplgaddtomacro\gplbacktext{%
      \csname LTb\endcsname%%
      \put(615,559){\makebox(0,0)[r]{\strut{}\small$10^{-1}$}}%
      \csname LTb\endcsname%%
      \put(615,1783){\makebox(0,0)[r]{\strut{}\small$10^{0}$}}%
      \csname LTb\endcsname%%
      \put(615,3006){\makebox(0,0)[r]{\strut{}\small$10^{1}$}}%
      \csname LTb\endcsname%%
      \put(1563,426){\makebox(0,0){\strut{}$10^{-2}$}}%
      \csname LTb\endcsname%%
      \put(2812,426){\makebox(0,0){\strut{}$10^{-1}$}}%
    }%
    \gplgaddtomacro\gplfronttext{%
      \csname LTb\endcsname%%
      \put(3612,2845){\makebox(0,0)[r]{\strut{} }}%
      \csname LTb\endcsname%%
      \put(3612,2698){\makebox(0,0)[r]{\strut{}\footnotesize$\sqrt{\frac{1}{Nh^2}}$}}%
      \csname LTb\endcsname%%
      \put(3612,2552){\makebox(0,0)[r]{\strut{} }}%
      \csname LTb\endcsname%%
      \put(3612,2405){\makebox(0,0)[r]{\strut{}\footnotesize$\sqrt{\frac{\log h}{Nh}}$}}%
      \csname LTb\endcsname%%
      \put(3612,2258){\makebox(0,0)[r]{\strut{} }}%
      \csname LTb\endcsname%%
      \put(3612,2112){\makebox(0,0)[r]{\strut{}\footnotesize$(1),p_3$}}%
      \csname LTb\endcsname%%
      \put(3612,1965){\makebox(0,0)[r]{\strut{}\footnotesize$(\infty),p_3$}}%
      \csname LTb\endcsname%%
      \put(3612,1819){\makebox(0,0)[r]{\strut{}\footnotesize SGCT,$p_3$}}%
      \csname LTb\endcsname%%
      \put(3612,1672){\makebox(0,0)[r]{\strut{}\footnotesize STD,$p_3$}}%
      \csname LTb\endcsname%%
      \put(48,1783){\rotatebox{-270.00}{\makebox(0,0){\strut{}$\|\rho_h^{(\star),\tau} - \rho(t))\|_{\mathrm{L}^2(\Omega)}$}}}%
      \csname LTb\endcsname%%
      \put(1751,93){\makebox(0,0){\strut{}Mesh size $h$}}%
    }%
    \gplbacktext
    \put(0,0){\includegraphics[width={201.00bp},height={158.00bp}]{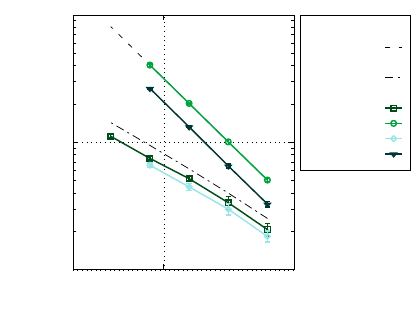}}%
    \gplfronttext
  \end{picture}%
\endgroup
}
 \end{center}
 \begin{center}
  \resizebox{0.9\textwidth}{!}{
  % GNUPLOT: LaTeX picture with Postscript
\begingroup
  \makeatletter
  \providecommand\color[2][]{%
    \GenericError{(gnuplot) \space\space\space\@spaces}{%
      Package color not loaded in conjunction with
      terminal option `colourtext'%
    }{See the gnuplot documentation for explanation.%
    }{Either use 'blacktext' in gnuplot or load the package
      color.sty in LaTeX.}%
    \renewcommand\color[2][]{}%
  }%
  \providecommand\includegraphics[2][]{%
    \GenericError{(gnuplot) \space\space\space\@spaces}{%
      Package graphicx or graphics not loaded%
    }{See the gnuplot documentation for explanation.%
    }{The gnuplot epslatex terminal needs graphicx.sty or graphics.sty.}%
    \renewcommand\includegraphics[2][]{}%
  }%
  \providecommand\rotatebox[2]{#2}%
  \@ifundefined{ifGPcolor}{%
    \newif\ifGPcolor
    \GPcolortrue
  }{}%
  \@ifundefined{ifGPblacktext}{%
    \newif\ifGPblacktext
    \GPblacktexttrue
  }{}%
  % define a \g@addto@macro without @ in the name:
  \let\gplgaddtomacro\g@addto@macro
  % define empty templates for all commands taking text:
  \gdef\gplbacktext{}%
  \gdef\gplfronttext{}%
  \makeatother
  \ifGPblacktext
    % no textcolor at all
    \def\colorrgb#1{}%
    \def\colorgray#1{}%
  \else
    % gray or color?
    \ifGPcolor
      \def\colorrgb#1{\color[rgb]{#1}}%
      \def\colorgray#1{\color[gray]{#1}}%
      \expandafter\def\csname LTw\endcsname{\color{white}}%
      \expandafter\def\csname LTb\endcsname{\color{black}}%
      \expandafter\def\csname LTa\endcsname{\color{black}}%
      \expandafter\def\csname LT0\endcsname{\color[rgb]{1,0,0}}%
      \expandafter\def\csname LT1\endcsname{\color[rgb]{0,1,0}}%
      \expandafter\def\csname LT2\endcsname{\color[rgb]{0,0,1}}%
      \expandafter\def\csname LT3\endcsname{\color[rgb]{1,0,1}}%
      \expandafter\def\csname LT4\endcsname{\color[rgb]{0,1,1}}%
      \expandafter\def\csname LT5\endcsname{\color[rgb]{1,1,0}}%
      \expandafter\def\csname LT6\endcsname{\color[rgb]{0,0,0}}%
      \expandafter\def\csname LT7\endcsname{\color[rgb]{1,0.3,0}}%
      \expandafter\def\csname LT8\endcsname{\color[rgb]{0.5,0.5,0.5}}%
    \else
      % gray
      \def\colorrgb#1{\color{black}}%
      \def\colorgray#1{\color[gray]{#1}}%
      \expandafter\def\csname LTw\endcsname{\color{white}}%
      \expandafter\def\csname LTb\endcsname{\color{black}}%
      \expandafter\def\csname LTa\endcsname{\color{black}}%
      \expandafter\def\csname LT0\endcsname{\color{black}}%
      \expandafter\def\csname LT1\endcsname{\color{black}}%
      \expandafter\def\csname LT2\endcsname{\color{black}}%
      \expandafter\def\csname LT3\endcsname{\color{black}}%
      \expandafter\def\csname LT4\endcsname{\color{black}}%
      \expandafter\def\csname LT5\endcsname{\color{black}}%
      \expandafter\def\csname LT6\endcsname{\color{black}}%
      \expandafter\def\csname LT7\endcsname{\color{black}}%
      \expandafter\def\csname LT8\endcsname{\color{black}}%
    \fi
  \fi
    \setlength{\unitlength}{0.0500bp}%
    \ifx\gptboxheight\undefined%
      \newlength{\gptboxheight}%
      \newlength{\gptboxwidth}%
      \newsavebox{\gptboxtext}%
    \fi%
    \setlength{\fboxrule}{0.5pt}%
    \setlength{\fboxsep}{1pt}%
    \definecolor{tbcol}{rgb}{1,1,1}%
\begin{picture}(4020.00,3160.00)%
    \gplgaddtomacro\gplbacktext{%
      \csname LTb\endcsname%%
      \put(615,559){\makebox(0,0)[r]{\strut{}\small$10^{-1}$}}%
      \csname LTb\endcsname%%
      \put(615,1783){\makebox(0,0)[r]{\strut{}\small$10^{0}$}}%
      \csname LTb\endcsname%%
      \put(615,3006){\makebox(0,0)[r]{\strut{}\small$10^{1}$}}%
      \csname LTb\endcsname%%
      \put(1088,426){\makebox(0,0){\strut{}$10^{-2}$}}%
      \csname LTb\endcsname%%
      \put(2413,426){\makebox(0,0){\strut{}$10^{-1}$}}%
    }%
    \gplgaddtomacro\gplfronttext{%
      \csname LTb\endcsname%%
      \put(3612,2845){\makebox(0,0)[r]{\strut{} }}%
      \csname LTb\endcsname%%
      \put(3612,2698){\makebox(0,0)[r]{\strut{}\footnotesize$C\sqrt{\frac{\log h}{N}}$}}%
      \csname LTb\endcsname%%
      \put(3612,2552){\makebox(0,0)[r]{\strut{} }}%
      \csname LTb\endcsname%%
      \put(3612,2405){\makebox(0,0)[r]{\strut{}\footnotesize$(1),p_1$}}%
      \csname LTb\endcsname%%
      \put(3612,2258){\makebox(0,0)[r]{\strut{}\footnotesize$(\infty),p_1$}}%
      \csname LTb\endcsname%%
      \put(3612,2112){\makebox(0,0)[r]{\strut{}\footnotesize SGCT,$p_1$}}%
      \csname LTb\endcsname%%
      \put(3612,1965){\makebox(0,0)[r]{\strut{}\footnotesize STD,$p_1$}}%
      \csname LTb\endcsname%%
      \put(48,1783){\rotatebox{-270.00}{\makebox(0,0){\strut{}$|\Phi_{h}^{(\star,\tau)} - \Phi(t))|_{\mathrm{H}^1(\Omega)}$}}}%
      \csname LTb\endcsname%%
      \put(1751,93){\makebox(0,0){\strut{}Mesh size $h$}}%
    }%
    \gplbacktext
    \put(0,0){\includegraphics[width={201.00bp},height={158.00bp}]{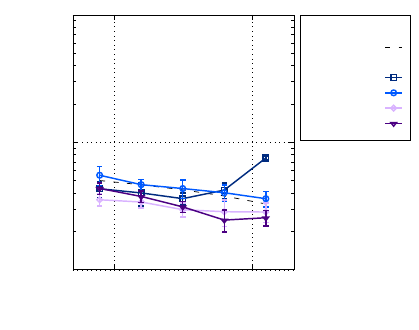}}%
    \gplfronttext
  \end{picture}%
\endgroup
\hspace*{1em}% GNUPLOT: LaTeX picture with Postscript
\begingroup
  \makeatletter
  \providecommand\color[2][]{%
    \GenericError{(gnuplot) \space\space\space\@spaces}{%
      Package color not loaded in conjunction with
      terminal option `colourtext'%
    }{See the gnuplot documentation for explanation.%
    }{Either use 'blacktext' in gnuplot or load the package
      color.sty in LaTeX.}%
    \renewcommand\color[2][]{}%
  }%
  \providecommand\includegraphics[2][]{%
    \GenericError{(gnuplot) \space\space\space\@spaces}{%
      Package graphicx or graphics not loaded%
    }{See the gnuplot documentation for explanation.%
    }{The gnuplot epslatex terminal needs graphicx.sty or graphics.sty.}%
    \renewcommand\includegraphics[2][]{}%
  }%
  \providecommand\rotatebox[2]{#2}%
  \@ifundefined{ifGPcolor}{%
    \newif\ifGPcolor
    \GPcolortrue
  }{}%
  \@ifundefined{ifGPblacktext}{%
    \newif\ifGPblacktext
    \GPblacktexttrue
  }{}%
  % define a \g@addto@macro without @ in the name:
  \let\gplgaddtomacro\g@addto@macro
  % define empty templates for all commands taking text:
  \gdef\gplbacktext{}%
  \gdef\gplfronttext{}%
  \makeatother
  \ifGPblacktext
    % no textcolor at all
    \def\colorrgb#1{}%
    \def\colorgray#1{}%
  \else
    % gray or color?
    \ifGPcolor
      \def\colorrgb#1{\color[rgb]{#1}}%
      \def\colorgray#1{\color[gray]{#1}}%
      \expandafter\def\csname LTw\endcsname{\color{white}}%
      \expandafter\def\csname LTb\endcsname{\color{black}}%
      \expandafter\def\csname LTa\endcsname{\color{black}}%
      \expandafter\def\csname LT0\endcsname{\color[rgb]{1,0,0}}%
      \expandafter\def\csname LT1\endcsname{\color[rgb]{0,1,0}}%
      \expandafter\def\csname LT2\endcsname{\color[rgb]{0,0,1}}%
      \expandafter\def\csname LT3\endcsname{\color[rgb]{1,0,1}}%
      \expandafter\def\csname LT4\endcsname{\color[rgb]{0,1,1}}%
      \expandafter\def\csname LT5\endcsname{\color[rgb]{1,1,0}}%
      \expandafter\def\csname LT6\endcsname{\color[rgb]{0,0,0}}%
      \expandafter\def\csname LT7\endcsname{\color[rgb]{1,0.3,0}}%
      \expandafter\def\csname LT8\endcsname{\color[rgb]{0.5,0.5,0.5}}%
    \else
      % gray
      \def\colorrgb#1{\color{black}}%
      \def\colorgray#1{\color[gray]{#1}}%
      \expandafter\def\csname LTw\endcsname{\color{white}}%
      \expandafter\def\csname LTb\endcsname{\color{black}}%
      \expandafter\def\csname LTa\endcsname{\color{black}}%
      \expandafter\def\csname LT0\endcsname{\color{black}}%
      \expandafter\def\csname LT1\endcsname{\color{black}}%
      \expandafter\def\csname LT2\endcsname{\color{black}}%
      \expandafter\def\csname LT3\endcsname{\color{black}}%
      \expandafter\def\csname LT4\endcsname{\color{black}}%
      \expandafter\def\csname LT5\endcsname{\color{black}}%
      \expandafter\def\csname LT6\endcsname{\color{black}}%
      \expandafter\def\csname LT7\endcsname{\color{black}}%
      \expandafter\def\csname LT8\endcsname{\color{black}}%
    \fi
  \fi
    \setlength{\unitlength}{0.0500bp}%
    \ifx\gptboxheight\undefined%
      \newlength{\gptboxheight}%
      \newlength{\gptboxwidth}%
      \newsavebox{\gptboxtext}%
    \fi%
    \setlength{\fboxrule}{0.5pt}%
    \setlength{\fboxsep}{1pt}%
    \definecolor{tbcol}{rgb}{1,1,1}%
\begin{picture}(4020.00,3160.00)%
    \gplgaddtomacro\gplbacktext{%
      \csname LTb\endcsname%%
      \put(615,559){\makebox(0,0)[r]{\strut{}\small$10^{-1}$}}%
      \csname LTb\endcsname%%
      \put(615,1783){\makebox(0,0)[r]{\strut{}\small$10^{0}$}}%
      \csname LTb\endcsname%%
      \put(615,3006){\makebox(0,0)[r]{\strut{}\small$10^{1}$}}%
      \csname LTb\endcsname%%
      \put(1088,426){\makebox(0,0){\strut{}$10^{-2}$}}%
      \csname LTb\endcsname%%
      \put(2413,426){\makebox(0,0){\strut{}$10^{-1}$}}%
    }%
    \gplgaddtomacro\gplfronttext{%
      \csname LTb\endcsname%%
      \put(3612,2845){\makebox(0,0)[r]{\strut{} }}%
      \csname LTb\endcsname%%
      \put(3612,2698){\makebox(0,0)[r]{\strut{}\footnotesize$C\sqrt{\frac{\log h}{N}}$}}%
      \csname LTb\endcsname%%
      \put(3612,2552){\makebox(0,0)[r]{\strut{} }}%
      \csname LTb\endcsname%%
      \put(3612,2405){\makebox(0,0)[r]{\strut{}\footnotesize$(1),p_3$}}%
      \csname LTb\endcsname%%
      \put(3612,2258){\makebox(0,0)[r]{\strut{}\footnotesize$(\infty),p_3$}}%
      \csname LTb\endcsname%%
      \put(3612,2112){\makebox(0,0)[r]{\strut{}\footnotesize SGCT,$p_3$}}%
      \csname LTb\endcsname%%
      \put(3612,1965){\makebox(0,0)[r]{\strut{}\footnotesize STD,$p_3$}}%
      \csname LTb\endcsname%%
      \put(48,1783){\rotatebox{-270.00}{\makebox(0,0){\strut{}$|\Phi_{h}^{(\star,\tau)} - \Phi(t))|_{\mathrm{H}^1(\Omega)}$}}}%
      \csname LTb\endcsname%%
      \put(1751,93){\makebox(0,0){\strut{}Mesh size $h$}}%
    }%
    \gplbacktext
    \put(0,0){\includegraphics[width={201.00bp},height={158.00bp}]{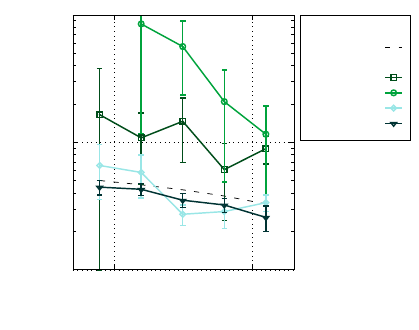}}%
    \gplfronttext
  \end{picture}%
\endgroup
}
 \end{center}
 \captionof{figure}{$\mathrm{L}^2$ norm and $\mathrm{H}^1$ semi-norm of the charge density (top) and electric potential (bottom) approximation errors as functions of the mesh size $h$, for a fixed number of particles $N=1{,}000$ at time $t=0.5$, with linear (left) and cubic B-splines (right). The mean and standard deviation are computed from $15$ realizations.}
\label{fig:2}
\end{center}

\Fabrice{We first comment on the charge density error results. The observed convergence rates of the statistical errors fully align with the theoretical bounds established in \cref{thm:0,thm:0:2,thm:1:2,prop:1}, scaling as $\mathcal{O}\big(1/\sqrt{N h^2}\big)$ for full-grid approximations and $\mathcal{O}\big(\sqrt{|\log h| / (N h)}\big)$ for sparse-grid approximations. When compared with the spatial complexity of the respective approximation spaces (\cref{prop:4}), these convergence behaviors strictly comply with the fundamental rule of PIC algorithms, where statistical noise scales inversely with the square root of the average number of particles per cell.}

\Fabrice{Concerning the electric field, several distinctive features emerge. First, assessing empirical convergence orders on the electric field proves challenging, particularly for cubic splines ($p_3$), which exhibit significant variance across independent realizations. This variability complicates formal order verification, a practical drawback also reported by other authors \cite{tranquilli22} regarding 2D electric field metrics. 
Nevertheless, for linear splines ($p_1$), all approximations exhibit an error scaling of $\mathcal{O}\big(\sqrt{|\log h| / N}\big)$. This reduced noise level, compared to the charge density, reflects the regularizing property of the discrete Poisson operator, whose inverse acts as an intrinsic spatial low-pass filter that significantly dampens high-frequency sampling fluctuations \cite{tranquilli22}. This behavior extends to cubic splines ($p_3$), with the notable exception of the hierarchical full-grid approximation, which undergoes a substantial growth in statistical noise as the grid is refined.}

It should be noted that the numerical tests presented here are restricted to two space dimensions, which is not the most favorable setting for assessing the benefits of sparse-grid methods. Indeed, the statistical noise of a PIC scheme scales as the inverse square root of the mean number of particles per cell, which involves the mesh size $h^{-d/2}$ for a full-grid approximation and to a power growing only as $|\log h|^{(d-1)/2}$ for a sparse-grid one. The ratio between these two scalings, which measures the noise-reduction gain of the sparse-grid approach at fixed particle count, grows substantially with the dimension $d$: in two dimensions the gain is already visible but moderate, whereas in three dimensions it becomes decisive, since the curse of dimensionality is dramatically more severe for full-grid methods than for their sparse-grid counterparts. Three-dimensional simulations with the SGCT-PIC method have confirmed this trend and demonstrated substantial reductions in the number of particles required to reach a prescribed accuracy~\cite{deluzet22-1,deluzet23,garrigues24, garrigues24-1,guillet24,guillet25}, an advantage that the present HSG-PIC method is expected to share and that will be assessed in a forthcoming three-dimensional implementation.

\subsubsection{Global error}
\label{sec:3.1.3}
We now investigate the global error, composed of both the grid-based and statistical components. We restrict our investigation to the examination of the charge density error, as it has been proven that it guarantees the accuracy of PIC algorithms~\cite{tranquilli22}; it is also easier to perform verifications and we have already examined the electric field error in the previous sections.

\paragraph{Manufactured solutions}
We fix the number of particles to $N=1{,}000{,}000$ and analyze the error at time $t=1.5$.
The $\mathrm{L}^2$ norm of the charge density approximation error is plotted as a function of the mesh size in~\cref{fig:3}, considering linear B-splines ($p_1$) on the left panel and cubic B-splines ($p_3$) on the right panel.

\begin{center}
\begin{center}
  \resizebox{0.9\textwidth}{!}{
  % GNUPLOT: LaTeX picture with Postscript
\begingroup
  \makeatletter
  \providecommand\color[2][]{%
    \GenericError{(gnuplot) \space\space\space\@spaces}{%
      Package color not loaded in conjunction with
      terminal option `colourtext'%
    }{See the gnuplot documentation for explanation.%
    }{Either use 'blacktext' in gnuplot or load the package
      color.sty in LaTeX.}%
    \renewcommand\color[2][]{}%
  }%
  \providecommand\includegraphics[2][]{%
    \GenericError{(gnuplot) \space\space\space\@spaces}{%
      Package graphicx or graphics not loaded%
    }{See the gnuplot documentation for explanation.%
    }{The gnuplot epslatex terminal needs graphicx.sty or graphics.sty.}%
    \renewcommand\includegraphics[2][]{}%
  }%
  \providecommand\rotatebox[2]{#2}%
  \@ifundefined{ifGPcolor}{%
    \newif\ifGPcolor
    \GPcolortrue
  }{}%
  \@ifundefined{ifGPblacktext}{%
    \newif\ifGPblacktext
    \GPblacktexttrue
  }{}%
  % define a \g@addto@macro without @ in the name:
  \let\gplgaddtomacro\g@addto@macro
  % define empty templates for all commands taking text:
  \gdef\gplbacktext{}%
  \gdef\gplfronttext{}%
  \makeatother
  \ifGPblacktext
    % no textcolor at all
    \def\colorrgb#1{}%
    \def\colorgray#1{}%
  \else
    % gray or color?
    \ifGPcolor
      \def\colorrgb#1{\color[rgb]{#1}}%
      \def\colorgray#1{\color[gray]{#1}}%
      \expandafter\def\csname LTw\endcsname{\color{white}}%
      \expandafter\def\csname LTb\endcsname{\color{black}}%
      \expandafter\def\csname LTa\endcsname{\color{black}}%
      \expandafter\def\csname LT0\endcsname{\color[rgb]{1,0,0}}%
      \expandafter\def\csname LT1\endcsname{\color[rgb]{0,1,0}}%
      \expandafter\def\csname LT2\endcsname{\color[rgb]{0,0,1}}%
      \expandafter\def\csname LT3\endcsname{\color[rgb]{1,0,1}}%
      \expandafter\def\csname LT4\endcsname{\color[rgb]{0,1,1}}%
      \expandafter\def\csname LT5\endcsname{\color[rgb]{1,1,0}}%
      \expandafter\def\csname LT6\endcsname{\color[rgb]{0,0,0}}%
      \expandafter\def\csname LT7\endcsname{\color[rgb]{1,0.3,0}}%
      \expandafter\def\csname LT8\endcsname{\color[rgb]{0.5,0.5,0.5}}%
    \else
      % gray
      \def\colorrgb#1{\color{black}}%
      \def\colorgray#1{\color[gray]{#1}}%
      \expandafter\def\csname LTw\endcsname{\color{white}}%
      \expandafter\def\csname LTb\endcsname{\color{black}}%
      \expandafter\def\csname LTa\endcsname{\color{black}}%
      \expandafter\def\csname LT0\endcsname{\color{black}}%
      \expandafter\def\csname LT1\endcsname{\color{black}}%
      \expandafter\def\csname LT2\endcsname{\color{black}}%
      \expandafter\def\csname LT3\endcsname{\color{black}}%
      \expandafter\def\csname LT4\endcsname{\color{black}}%
      \expandafter\def\csname LT5\endcsname{\color{black}}%
      \expandafter\def\csname LT6\endcsname{\color{black}}%
      \expandafter\def\csname LT7\endcsname{\color{black}}%
      \expandafter\def\csname LT8\endcsname{\color{black}}%
    \fi
  \fi
    \setlength{\unitlength}{0.0500bp}%
    \ifx\gptboxheight\undefined%
      \newlength{\gptboxheight}%
      \newlength{\gptboxwidth}%
      \newsavebox{\gptboxtext}%
    \fi%
    \setlength{\fboxrule}{0.5pt}%
    \setlength{\fboxsep}{1pt}%
    \definecolor{tbcol}{rgb}{1,1,1}%
\begin{picture}(4020.00,3160.00)%
    \gplgaddtomacro\gplbacktext{%
      \csname LTb\endcsname%%
      \put(615,879){\makebox(0,0)[r]{\strut{}\small$10^{-2}$}}%
      \csname LTb\endcsname%%
      \put(615,1943){\makebox(0,0)[r]{\strut{}\small$10^{-1}$}}%
      \csname LTb\endcsname%%
      \put(615,3006){\makebox(0,0)[r]{\strut{}\small$10^{0}$}}%
      \csname LTb\endcsname%%
      \put(1239,426){\makebox(0,0){\strut{}$10^{-2}$}}%
      \csname LTb\endcsname%%
      \put(2026,426){\makebox(0,0){\strut{}$10^{-1}$}}%
      \csname LTb\endcsname%%
      \put(2812,426){\makebox(0,0){\strut{}$\small 10^{0}$}}%
    }%
    \gplgaddtomacro\gplfronttext{%
      \csname LTb\endcsname%%
      \put(3612,2845){\makebox(0,0)[r]{\strut{}\footnotesize$C\sqrt{\frac{1}{Nh^2}}$}}%
      \csname LTb\endcsname%%
      \put(3612,2698){\makebox(0,0)[r]{\strut{} }}%
      \csname LTb\endcsname%%
      \put(3612,2552){\makebox(0,0)[r]{\strut{}\footnotesize$C\sqrt{\frac{\log h}{Nh}}$}}%
      \csname LTb\endcsname%%
      \put(3612,2405){\makebox(0,0)[r]{\strut{} }}%
      \csname LTb\endcsname%%
      \put(3612,2258){\makebox(0,0)[r]{\strut{}\footnotesize$h^{2}$}}%
      \csname LTb\endcsname%%
      \put(3612,2112){\makebox(0,0)[r]{\strut{}\footnotesize$h^{2}\log h$}}%
      \csname LTb\endcsname%%
      \put(3612,1965){\makebox(0,0)[r]{\strut{}\footnotesize$(1),p_1$}}%
      \csname LTb\endcsname%%
      \put(3612,1819){\makebox(0,0)[r]{\strut{}\footnotesize$(\infty),p_1$}}%
      \csname LTb\endcsname%%
      \put(3612,1672){\makebox(0,0)[r]{\strut{}\footnotesize SGCT,$p_1$}}%
      \csname LTb\endcsname%%
      \put(3612,1525){\makebox(0,0)[r]{\strut{}\footnotesize STD,$p_1$}}%
      \csname LTb\endcsname%%
      \put(48,1783){\rotatebox{-270.00}{\makebox(0,0){\strut{}$\|\rho_h^{(\star),\tau} - \rho(t))\|_{\mathrm{L}^2(\Omega)}$}}}%
      \csname LTb\endcsname%%
      \put(1751,93){\makebox(0,0){\strut{}Mesh size $h$}}%
    }%
    \gplbacktext
    \put(0,0){\includegraphics[width={201.00bp},height={158.00bp}]{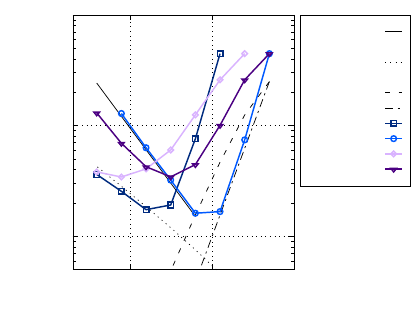}}%
    \gplfronttext
  \end{picture}%
\endgroup
\hspace*{1em}% GNUPLOT: LaTeX picture with Postscript
\begingroup
  \makeatletter
  \providecommand\color[2][]{%
    \GenericError{(gnuplot) \space\space\space\@spaces}{%
      Package color not loaded in conjunction with
      terminal option `colourtext'%
    }{See the gnuplot documentation for explanation.%
    }{Either use 'blacktext' in gnuplot or load the package
      color.sty in LaTeX.}%
    \renewcommand\color[2][]{}%
  }%
  \providecommand\includegraphics[2][]{%
    \GenericError{(gnuplot) \space\space\space\@spaces}{%
      Package graphicx or graphics not loaded%
    }{See the gnuplot documentation for explanation.%
    }{The gnuplot epslatex terminal needs graphicx.sty or graphics.sty.}%
    \renewcommand\includegraphics[2][]{}%
  }%
  \providecommand\rotatebox[2]{#2}%
  \@ifundefined{ifGPcolor}{%
    \newif\ifGPcolor
    \GPcolortrue
  }{}%
  \@ifundefined{ifGPblacktext}{%
    \newif\ifGPblacktext
    \GPblacktexttrue
  }{}%
  % define a \g@addto@macro without @ in the name:
  \let\gplgaddtomacro\g@addto@macro
  % define empty templates for all commands taking text:
  \gdef\gplbacktext{}%
  \gdef\gplfronttext{}%
  \makeatother
  \ifGPblacktext
    % no textcolor at all
    \def\colorrgb#1{}%
    \def\colorgray#1{}%
  \else
    % gray or color?
    \ifGPcolor
      \def\colorrgb#1{\color[rgb]{#1}}%
      \def\colorgray#1{\color[gray]{#1}}%
      \expandafter\def\csname LTw\endcsname{\color{white}}%
      \expandafter\def\csname LTb\endcsname{\color{black}}%
      \expandafter\def\csname LTa\endcsname{\color{black}}%
      \expandafter\def\csname LT0\endcsname{\color[rgb]{1,0,0}}%
      \expandafter\def\csname LT1\endcsname{\color[rgb]{0,1,0}}%
      \expandafter\def\csname LT2\endcsname{\color[rgb]{0,0,1}}%
      \expandafter\def\csname LT3\endcsname{\color[rgb]{1,0,1}}%
      \expandafter\def\csname LT4\endcsname{\color[rgb]{0,1,1}}%
      \expandafter\def\csname LT5\endcsname{\color[rgb]{1,1,0}}%
      \expandafter\def\csname LT6\endcsname{\color[rgb]{0,0,0}}%
      \expandafter\def\csname LT7\endcsname{\color[rgb]{1,0.3,0}}%
      \expandafter\def\csname LT8\endcsname{\color[rgb]{0.5,0.5,0.5}}%
    \else
      % gray
      \def\colorrgb#1{\color{black}}%
      \def\colorgray#1{\color[gray]{#1}}%
      \expandafter\def\csname LTw\endcsname{\color{white}}%
      \expandafter\def\csname LTb\endcsname{\color{black}}%
      \expandafter\def\csname LTa\endcsname{\color{black}}%
      \expandafter\def\csname LT0\endcsname{\color{black}}%
      \expandafter\def\csname LT1\endcsname{\color{black}}%
      \expandafter\def\csname LT2\endcsname{\color{black}}%
      \expandafter\def\csname LT3\endcsname{\color{black}}%
      \expandafter\def\csname LT4\endcsname{\color{black}}%
      \expandafter\def\csname LT5\endcsname{\color{black}}%
      \expandafter\def\csname LT6\endcsname{\color{black}}%
      \expandafter\def\csname LT7\endcsname{\color{black}}%
      \expandafter\def\csname LT8\endcsname{\color{black}}%
    \fi
  \fi
    \setlength{\unitlength}{0.0500bp}%
    \ifx\gptboxheight\undefined%
      \newlength{\gptboxheight}%
      \newlength{\gptboxwidth}%
      \newsavebox{\gptboxtext}%
    \fi%
    \setlength{\fboxrule}{0.5pt}%
    \setlength{\fboxsep}{1pt}%
    \definecolor{tbcol}{rgb}{1,1,1}%
\begin{picture}(4020.00,3160.00)%
    \gplgaddtomacro\gplbacktext{%
      \csname LTb\endcsname%%
      \put(615,879){\makebox(0,0)[r]{\strut{}\small$10^{-2}$}}%
      \csname LTb\endcsname%%
      \put(615,1943){\makebox(0,0)[r]{\strut{}\small$10^{-1}$}}%
      \csname LTb\endcsname%%
      \put(615,3006){\makebox(0,0)[r]{\strut{}\small$10^{0}$}}%
      \csname LTb\endcsname%%
      \put(1239,426){\makebox(0,0){\strut{}$10^{-2}$}}%
      \csname LTb\endcsname%%
      \put(2026,426){\makebox(0,0){\strut{}$10^{-1}$}}%
      \csname LTb\endcsname%%
      \put(2812,426){\makebox(0,0){\strut{}$\small 10^{0}$}}%
    }%
    \gplgaddtomacro\gplfronttext{%
      \csname LTb\endcsname%%
      \put(3612,2845){\makebox(0,0)[r]{\strut{}\footnotesize$C\sqrt{\frac{1}{Nh^2}}$}}%
      \csname LTb\endcsname%%
      \put(3612,2698){\makebox(0,0)[r]{\strut{} }}%
      \csname LTb\endcsname%%
      \put(3612,2552){\makebox(0,0)[r]{\strut{}\footnotesize$C\sqrt{\frac{\log h}{Nh}}$}}%
      \csname LTb\endcsname%%
      \put(3612,2405){\makebox(0,0)[r]{\strut{} }}%
      \csname LTb\endcsname%%
      \put(3612,2258){\makebox(0,0)[r]{\strut{}\footnotesize$h^{4}$}}%
      \csname LTb\endcsname%%
      \put(3612,2112){\makebox(0,0)[r]{\strut{}\footnotesize$h^{4}\log h$}}%
      \csname LTb\endcsname%%
      \put(3612,1965){\makebox(0,0)[r]{\strut{}\footnotesize$(1),p_3$}}%
      \csname LTb\endcsname%%
      \put(3612,1819){\makebox(0,0)[r]{\strut{}\footnotesize$(\infty),p_3$}}%
      \csname LTb\endcsname%%
      \put(3612,1672){\makebox(0,0)[r]{\strut{}\footnotesize SGCT,$p_3$}}%
      \csname LTb\endcsname%%
      \put(3612,1525){\makebox(0,0)[r]{\strut{}\footnotesize STD,$p_3$}}%
      \csname LTb\endcsname%%
      \put(48,1783){\rotatebox{-270.00}{\makebox(0,0){\strut{}$\|\rho_h^{(\star),\tau} - \rho(t))\|_{\mathrm{L}^2(\Omega)}$}}}%
      \csname LTb\endcsname%%
      \put(1751,93){\makebox(0,0){\strut{}Mesh size $h$}}%
    }%
    \gplbacktext
    \put(0,0){\includegraphics[width={201.00bp},height={158.00bp}]{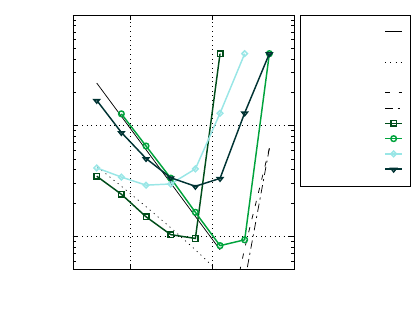}}%
    \gplfronttext
  \end{picture}%
\endgroup
}
 \end{center}
 \captionof{figure}{$\mathrm{L}^2$ norm of the charge density approximation error for the manufactured solutions as function of the mesh size $h$, for a fixed number of particles $N=1{,}000{,}000$ at time $t=1.5$, with linear (left) and cubic B-splines (right).}
\label{fig:3}
\end{center}

The convergence of the error is decomposed into two phases: one dominated by the grid-based error; and one dominated by the statistical error. 

In the first phase, the global error decreases with the mesh size and we can observe the theoretical convergence orders as experienced in~\cref{sec:3.1.1}. That is, the HSG errors scale as $\mathcal{O}(h^{p+1})$ for the two approximation spaces ($(1)$ and $(\infty)$). Concerning the STD and SGCT errors, at least for linear splines ($p_1$), the theoretical bounds of~\cref{prop:1} are recovered, that is $\mathcal{O}(h^{2})$ for STD and $\mathcal{O}(h^{2}\log h)$ for SGCT. We cannot conclude for the cubic splines ($p_3$) as the global error is quickly dominated by the statistical error. %For all spline degrees, the statistical error is significantly larger for the STD and SGCT methods than the HSG ones.

The global error then deviates from the theoretical orders, reaches a plateau and enters in the second phase, where it increases following the statistical growth examined in~\cref{sec:3.1.2}. %For significant mesh sizes where the grid heating constraint is verified, i.e. $h\leq 1/60$, the hierarchical sparse-grid approximation provide the smallest error. 

The electron density is plotted at time $t=0.5$ in~\cref{fig:6} for the different methods with mesh size $h=2^{-7}$, $N=500{,}000$ particles and linear ($p_1$) B-splines. 

\begin{center}
\begin{minipage}{\textwidth}
  \centering
  \begin{minipage}[]{0.31\textwidth}
\centering {\footnotesize STD-PIC} \\  
 \includegraphics[width=1\textwidth]{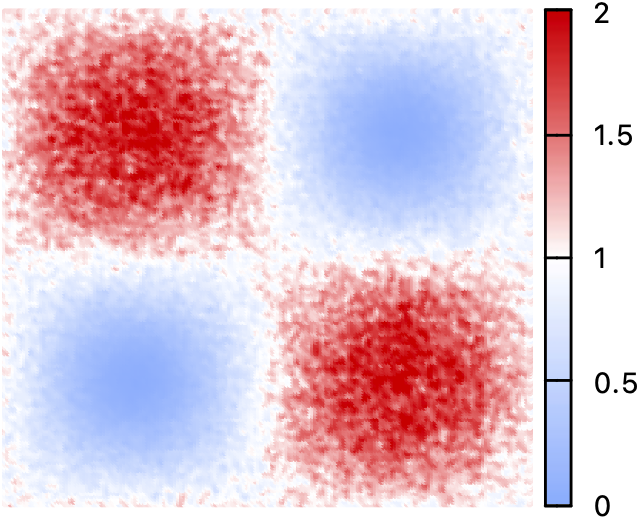}   
\end{minipage} 
  \begin{minipage}[]{0.31\textwidth}
\centering {\footnotesize HSG-$(\infty)$-PIC} \\  
 \includegraphics[width=1\textwidth]{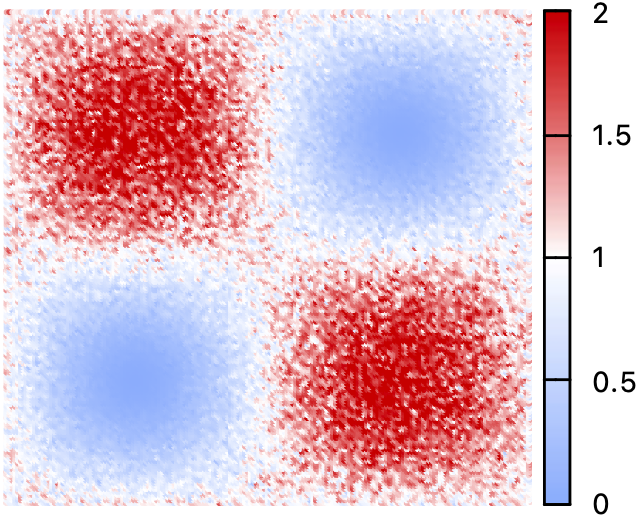}   
\end{minipage} \\ \vspace{1em}
\begin{minipage}[]{0.31\textwidth}
\centering {\footnotesize SGCT-PIC} \\ 
 \includegraphics[width=1\textwidth]{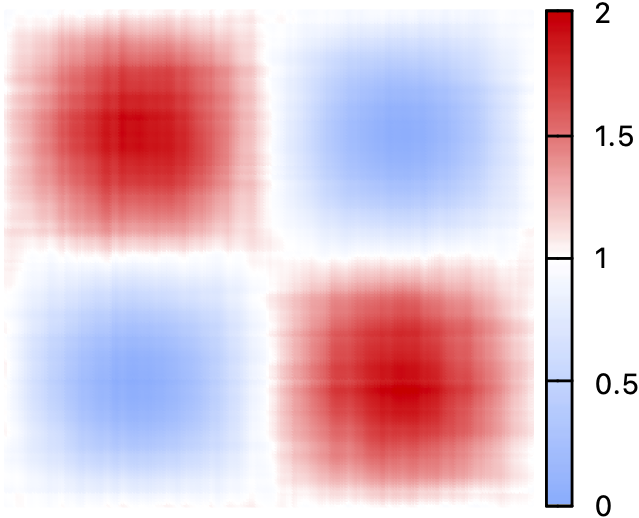}   
\end{minipage} 
\begin{minipage}[]{0.31\textwidth}
\centering {\footnotesize HSG-$(1)$-PIC} \\
  \includegraphics[width=1\textwidth]{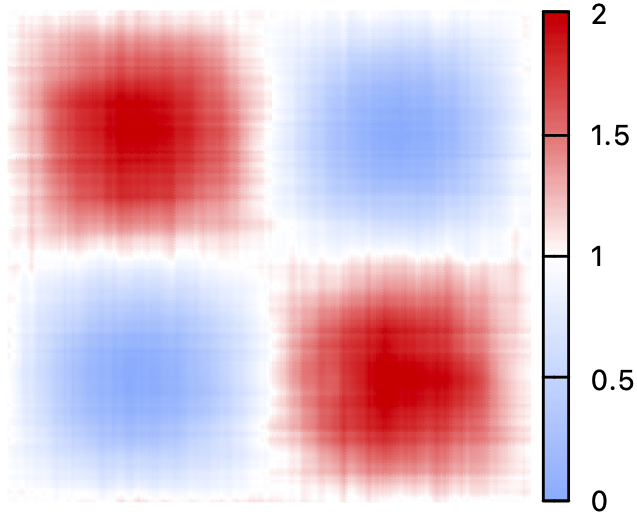}   
\end{minipage}
\captionof{figure}{Manufactured solutions: electron density at $t=0.5$, with mesh size $h=2^{-7}$, $N=500{,}000$ particles and linear ($p_1$) B-splines.} \label{fig:6}
\end{minipage}
\end{center}

\paragraph{Diocotron instability}
We first investigate the charge density approximation error at initial time $t=0$, for which the exact solution is known. The $\mathrm{L}^2$ norm approximation error is plotted as a function of the mesh size in~\cref{fig:4}, considering linear B-splines ($p_1$) on the left panel and cubic B-splines ($p_3$) on the right panel.

\begin{center}
\begin{center}
  \resizebox{0.9\textwidth}{!}{
  % GNUPLOT: LaTeX picture with Postscript
\begingroup
  \makeatletter
  \providecommand\color[2][]{%
    \GenericError{(gnuplot) \space\space\space\@spaces}{%
      Package color not loaded in conjunction with
      terminal option `colourtext'%
    }{See the gnuplot documentation for explanation.%
    }{Either use 'blacktext' in gnuplot or load the package
      color.sty in LaTeX.}%
    \renewcommand\color[2][]{}%
  }%
  \providecommand\includegraphics[2][]{%
    \GenericError{(gnuplot) \space\space\space\@spaces}{%
      Package graphicx or graphics not loaded%
    }{See the gnuplot documentation for explanation.%
    }{The gnuplot epslatex terminal needs graphicx.sty or graphics.sty.}%
    \renewcommand\includegraphics[2][]{}%
  }%
  \providecommand\rotatebox[2]{#2}%
  \@ifundefined{ifGPcolor}{%
    \newif\ifGPcolor
    \GPcolortrue
  }{}%
  \@ifundefined{ifGPblacktext}{%
    \newif\ifGPblacktext
    \GPblacktexttrue
  }{}%
  % define a \g@addto@macro without @ in the name:
  \let\gplgaddtomacro\g@addto@macro
  % define empty templates for all commands taking text:
  \gdef\gplbacktext{}%
  \gdef\gplfronttext{}%
  \makeatother
  \ifGPblacktext
    % no textcolor at all
    \def\colorrgb#1{}%
    \def\colorgray#1{}%
  \else
    % gray or color?
    \ifGPcolor
      \def\colorrgb#1{\color[rgb]{#1}}%
      \def\colorgray#1{\color[gray]{#1}}%
      \expandafter\def\csname LTw\endcsname{\color{white}}%
      \expandafter\def\csname LTb\endcsname{\color{black}}%
      \expandafter\def\csname LTa\endcsname{\color{black}}%
      \expandafter\def\csname LT0\endcsname{\color[rgb]{1,0,0}}%
      \expandafter\def\csname LT1\endcsname{\color[rgb]{0,1,0}}%
      \expandafter\def\csname LT2\endcsname{\color[rgb]{0,0,1}}%
      \expandafter\def\csname LT3\endcsname{\color[rgb]{1,0,1}}%
      \expandafter\def\csname LT4\endcsname{\color[rgb]{0,1,1}}%
      \expandafter\def\csname LT5\endcsname{\color[rgb]{1,1,0}}%
      \expandafter\def\csname LT6\endcsname{\color[rgb]{0,0,0}}%
      \expandafter\def\csname LT7\endcsname{\color[rgb]{1,0.3,0}}%
      \expandafter\def\csname LT8\endcsname{\color[rgb]{0.5,0.5,0.5}}%
    \else
      % gray
      \def\colorrgb#1{\color{black}}%
      \def\colorgray#1{\color[gray]{#1}}%
      \expandafter\def\csname LTw\endcsname{\color{white}}%
      \expandafter\def\csname LTb\endcsname{\color{black}}%
      \expandafter\def\csname LTa\endcsname{\color{black}}%
      \expandafter\def\csname LT0\endcsname{\color{black}}%
      \expandafter\def\csname LT1\endcsname{\color{black}}%
      \expandafter\def\csname LT2\endcsname{\color{black}}%
      \expandafter\def\csname LT3\endcsname{\color{black}}%
      \expandafter\def\csname LT4\endcsname{\color{black}}%
      \expandafter\def\csname LT5\endcsname{\color{black}}%
      \expandafter\def\csname LT6\endcsname{\color{black}}%
      \expandafter\def\csname LT7\endcsname{\color{black}}%
      \expandafter\def\csname LT8\endcsname{\color{black}}%
    \fi
  \fi
    \setlength{\unitlength}{0.0500bp}%
    \ifx\gptboxheight\undefined%
      \newlength{\gptboxheight}%
      \newlength{\gptboxwidth}%
      \newsavebox{\gptboxtext}%
    \fi%
    \setlength{\fboxrule}{0.5pt}%
    \setlength{\fboxsep}{1pt}%
    \definecolor{tbcol}{rgb}{1,1,1}%
\begin{picture}(4020.00,3160.00)%
    \gplgaddtomacro\gplbacktext{%
      \csname LTb\endcsname%%
      \put(615,559){\makebox(0,0)[r]{\strut{}\small$10^{-2}$}}%
      \csname LTb\endcsname%%
      \put(615,1783){\makebox(0,0)[r]{\strut{}\small$10^{-1}$}}%
      \csname LTb\endcsname%%
      \put(615,3006){\makebox(0,0)[r]{\strut{}\small$10^{0}$}}%
      \csname LTb\endcsname%%
      \put(689,426){\makebox(0,0){\strut{}$10^{-3}$}}%
      \csname LTb\endcsname%%
      \put(1546,426){\makebox(0,0){\strut{}$10^{-2}$}}%
      \csname LTb\endcsname%%
      \put(2403,426){\makebox(0,0){\strut{}$10^{-1}$}}%
    }%
    \gplgaddtomacro\gplfronttext{%
      \csname LTb\endcsname%%
      \put(3612,2845){\makebox(0,0)[r]{\strut{}\footnotesize$Ch^{2}$}}%
      \csname LTb\endcsname%%
      \put(3612,2698){\makebox(0,0)[r]{\strut{}\footnotesize$Ch^{2}\log h$}}%
      \csname LTb\endcsname%%
      \put(3612,2552){\makebox(0,0)[r]{\strut{}\footnotesize$(1),p_1$}}%
      \csname LTb\endcsname%%
      \put(3612,2405){\makebox(0,0)[r]{\strut{}\footnotesize$(\infty),p_1$}}%
      \csname LTb\endcsname%%
      \put(3612,2258){\makebox(0,0)[r]{\strut{}\footnotesize SGCT,$p_1$}}%
      \csname LTb\endcsname%%
      \put(3612,2112){\makebox(0,0)[r]{\strut{}\footnotesize STD,$p_1$}}%
      \csname LTb\endcsname%%
      \put(48,1783){\rotatebox{-270.00}{\makebox(0,0){\strut{}$\|\rho_h^{(\star),\tau} - \rho(t))\|_{\mathrm{L}^2(\Omega)}$}}}%
      \csname LTb\endcsname%%
      \put(1751,93){\makebox(0,0){\strut{}Mesh size $h$}}%
    }%
    \gplbacktext
    \put(0,0){\includegraphics[width={201.00bp},height={158.00bp}]{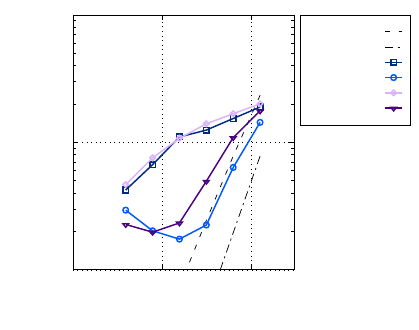}}%
    \gplfronttext
  \end{picture}%
\endgroup
\hspace*{1em}% GNUPLOT: LaTeX picture with Postscript
\begingroup
  \makeatletter
  \providecommand\color[2][]{%
    \GenericError{(gnuplot) \space\space\space\@spaces}{%
      Package color not loaded in conjunction with
      terminal option `colourtext'%
    }{See the gnuplot documentation for explanation.%
    }{Either use 'blacktext' in gnuplot or load the package
      color.sty in LaTeX.}%
    \renewcommand\color[2][]{}%
  }%
  \providecommand\includegraphics[2][]{%
    \GenericError{(gnuplot) \space\space\space\@spaces}{%
      Package graphicx or graphics not loaded%
    }{See the gnuplot documentation for explanation.%
    }{The gnuplot epslatex terminal needs graphicx.sty or graphics.sty.}%
    \renewcommand\includegraphics[2][]{}%
  }%
  \providecommand\rotatebox[2]{#2}%
  \@ifundefined{ifGPcolor}{%
    \newif\ifGPcolor
    \GPcolortrue
  }{}%
  \@ifundefined{ifGPblacktext}{%
    \newif\ifGPblacktext
    \GPblacktexttrue
  }{}%
  % define a \g@addto@macro without @ in the name:
  \let\gplgaddtomacro\g@addto@macro
  % define empty templates for all commands taking text:
  \gdef\gplbacktext{}%
  \gdef\gplfronttext{}%
  \makeatother
  \ifGPblacktext
    % no textcolor at all
    \def\colorrgb#1{}%
    \def\colorgray#1{}%
  \else
    % gray or color?
    \ifGPcolor
      \def\colorrgb#1{\color[rgb]{#1}}%
      \def\colorgray#1{\color[gray]{#1}}%
      \expandafter\def\csname LTw\endcsname{\color{white}}%
      \expandafter\def\csname LTb\endcsname{\color{black}}%
      \expandafter\def\csname LTa\endcsname{\color{black}}%
      \expandafter\def\csname LT0\endcsname{\color[rgb]{1,0,0}}%
      \expandafter\def\csname LT1\endcsname{\color[rgb]{0,1,0}}%
      \expandafter\def\csname LT2\endcsname{\color[rgb]{0,0,1}}%
      \expandafter\def\csname LT3\endcsname{\color[rgb]{1,0,1}}%
      \expandafter\def\csname LT4\endcsname{\color[rgb]{0,1,1}}%
      \expandafter\def\csname LT5\endcsname{\color[rgb]{1,1,0}}%
      \expandafter\def\csname LT6\endcsname{\color[rgb]{0,0,0}}%
      \expandafter\def\csname LT7\endcsname{\color[rgb]{1,0.3,0}}%
      \expandafter\def\csname LT8\endcsname{\color[rgb]{0.5,0.5,0.5}}%
    \else
      % gray
      \def\colorrgb#1{\color{black}}%
      \def\colorgray#1{\color[gray]{#1}}%
      \expandafter\def\csname LTw\endcsname{\color{white}}%
      \expandafter\def\csname LTb\endcsname{\color{black}}%
      \expandafter\def\csname LTa\endcsname{\color{black}}%
      \expandafter\def\csname LT0\endcsname{\color{black}}%
      \expandafter\def\csname LT1\endcsname{\color{black}}%
      \expandafter\def\csname LT2\endcsname{\color{black}}%
      \expandafter\def\csname LT3\endcsname{\color{black}}%
      \expandafter\def\csname LT4\endcsname{\color{black}}%
      \expandafter\def\csname LT5\endcsname{\color{black}}%
      \expandafter\def\csname LT6\endcsname{\color{black}}%
      \expandafter\def\csname LT7\endcsname{\color{black}}%
      \expandafter\def\csname LT8\endcsname{\color{black}}%
    \fi
  \fi
    \setlength{\unitlength}{0.0500bp}%
    \ifx\gptboxheight\undefined%
      \newlength{\gptboxheight}%
      \newlength{\gptboxwidth}%
      \newsavebox{\gptboxtext}%
    \fi%
    \setlength{\fboxrule}{0.5pt}%
    \setlength{\fboxsep}{1pt}%
    \definecolor{tbcol}{rgb}{1,1,1}%
\begin{picture}(4020.00,3160.00)%
    \gplgaddtomacro\gplbacktext{%
      \csname LTb\endcsname%%
      \put(615,559){\makebox(0,0)[r]{\strut{}\small$10^{-2}$}}%
      \csname LTb\endcsname%%
      \put(615,1783){\makebox(0,0)[r]{\strut{}\small$10^{-1}$}}%
      \csname LTb\endcsname%%
      \put(615,3006){\makebox(0,0)[r]{\strut{}\small$10^{0}$}}%
      \csname LTb\endcsname%%
      \put(689,426){\makebox(0,0){\strut{}$10^{-3}$}}%
      \csname LTb\endcsname%%
      \put(1546,426){\makebox(0,0){\strut{}$10^{-2}$}}%
      \csname LTb\endcsname%%
      \put(2403,426){\makebox(0,0){\strut{}$10^{-1}$}}%
    }%
    \gplgaddtomacro\gplfronttext{%
      \csname LTb\endcsname%%
      \put(3612,2845){\makebox(0,0)[r]{\strut{}\footnotesize$Ch^{4}$}}%
      \csname LTb\endcsname%%
      \put(3612,2698){\makebox(0,0)[r]{\strut{}\footnotesize$Ch^{4}\log h$}}%
      \csname LTb\endcsname%%
      \put(3612,2552){\makebox(0,0)[r]{\strut{}\footnotesize$(1),p_3$}}%
      \csname LTb\endcsname%%
      \put(3612,2405){\makebox(0,0)[r]{\strut{}\footnotesize$(\infty),p_3$}}%
      \csname LTb\endcsname%%
      \put(3612,2258){\makebox(0,0)[r]{\strut{}\footnotesize SGCT,$p_3$}}%
      \csname LTb\endcsname%%
      \put(3612,2112){\makebox(0,0)[r]{\strut{}\footnotesize STD,$p_3$}}%
      \csname LTb\endcsname%%
      \put(48,1783){\rotatebox{-270.00}{\makebox(0,0){\strut{}$\|\rho_h^{(\star),\tau} - \rho(t))\|_{\mathrm{L}^2(\Omega)}$}}}%
      \csname LTb\endcsname%%
      \put(1751,93){\makebox(0,0){\strut{}Mesh size $h$}}%
    }%
    \gplbacktext
    \put(0,0){\includegraphics[width={201.00bp},height={158.00bp}]{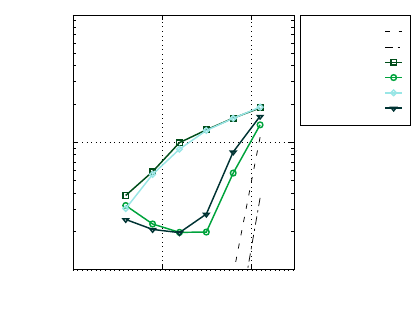}}%
    \gplfronttext
  \end{picture}%
\endgroup
}
 \end{center}
 \captionof{figure}{$\mathrm{L}^2$ norm of the charge density approximation error for the diocotron instability as function of the mesh size $h$, for a fixed number of particles $N=1{,}000{,}000$ at time $t=0$, with linear (left) and cubic B-splines (right).}
\label{fig:4}
\end{center}

No significant difference is observed between the errors obtained with linear and cubic B-splines, confirming that a higher polynomial degree brings no accuracy improvement when the solution \Fabrice{is not regular enough}. By contrast, increasing
the resolution of the approximation space, from $V^{(1)}_h$ to $V^{(\infty)}_h$, which amounts to introducing a finer partition and additional basis functions; does improve the accuracy of the computed solution. This is consistent with the expected behavior of the bias for non-smooth solutions, for which the convergence is governed by the resolution of the mesh rather than by the polynomial degree. In both cases, the optimal theoretical convergence rates are not achieved, neither for the sparse-grid nor for the full-grid approximation. This observation calls for the development of local refinement strategies, in which degrees of freedom are concentrated where the solution presents sharp gradients rather than distributed uniformly over the whole domain. In this respect, the present work, which brings sparse-grid particle methods into the hierarchical Galerkin framework, is a preparatory step toward such adaptive methods: the hierarchical structure of the approximation space naturally provides local error indicators through the decay of the hierarchical surplus, opening the way to spatially adaptive sparse-grid PIC schemes.

We now investigate the evolution of the diocotron instability. In order for the debye length to be resolved, the mesh shall meet the condition $h\leq 1/60$. We fix the mesh size to $h=2^{-8}$ for the full-grid methods and since sparse-grid truncations deteriorate accuracy for non-smooth solutions, we set the mesh size to $h=2^{-10}$ for sparse-grid methods. The number of particles is set so that the average number of particles per cell is $100$ for the STD- and SGCT-PIC methods, which results in $N=2{,}621{,}440$ (STD, HSG-$(\infty)$) and $N=1{,}187{,}840$ (SGCT), and amounts for equivalent statistical noise between the methods. We also set $N=1{,}187{,}840$ for the sparse-grid HSG-PIC method.

The electron density is plotted at time $t=35$ for the different methods in~\cref{fig:5}. 
\begin{center}
\begin{minipage}{\textwidth}
  \centering
  \begin{minipage}[]{0.31\textwidth}
\centering {\footnotesize STD-PIC} \\  
 \includegraphics[width=1\textwidth]{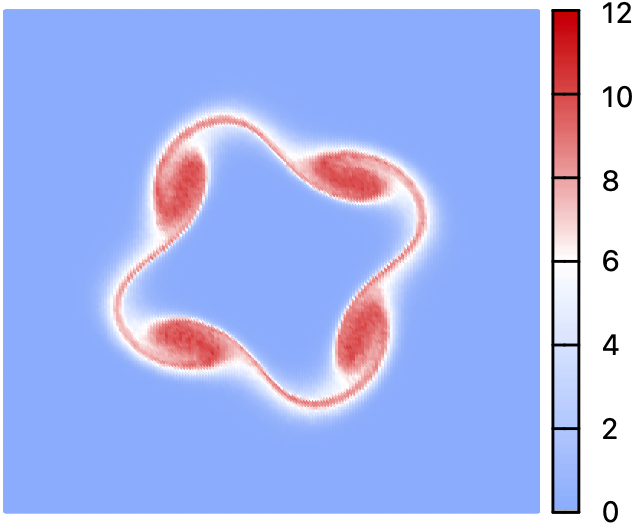}   
\end{minipage} 
  \begin{minipage}[]{0.31\textwidth}
\centering {\footnotesize HSG-$(\infty)$-PIC} \\  
 \includegraphics[width=1\textwidth]{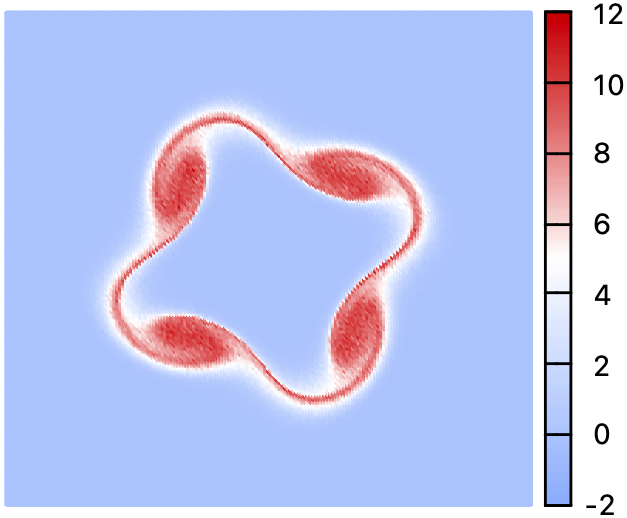}   
\end{minipage} \\ \vspace{0.5em}
\begin{minipage}[]{0.31\textwidth}
\centering {\footnotesize SGCT-PIC} \\ 
 \includegraphics[width=1\textwidth]{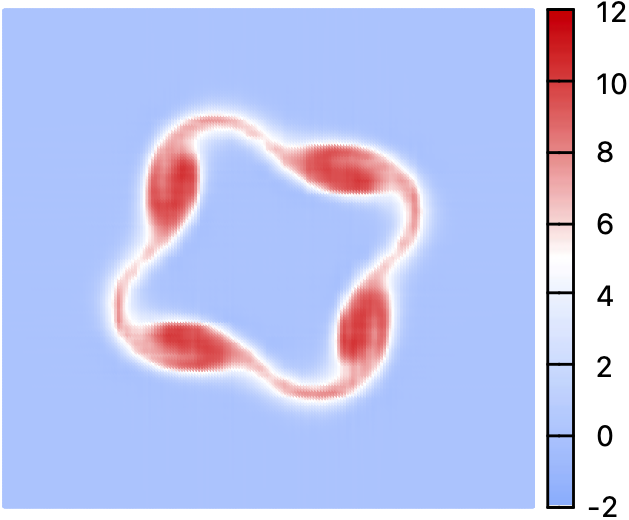}   
\end{minipage} 
\begin{minipage}[]{0.31\textwidth}
\centering {\footnotesize HSG-$(1)$-PIC} \\
  \includegraphics[width=1\textwidth]{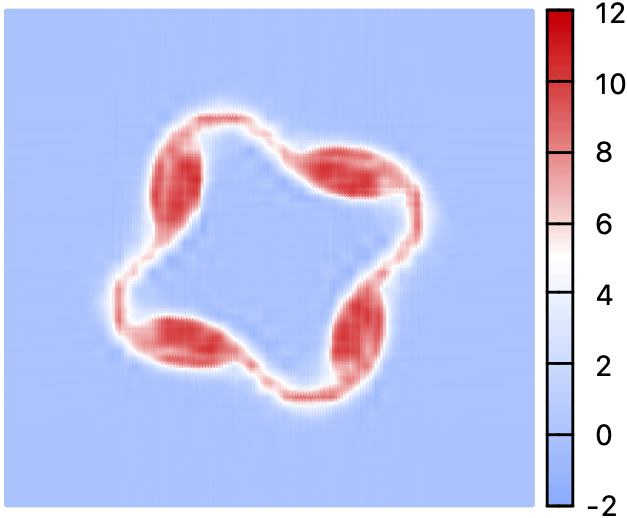}   
\end{minipage}
\captionof{figure}{Electron density at $t=35$, with linear ($p_1$) B-splines.} \label{fig:5}
\end{minipage}
\end{center}

The accuracy of the SGCT and HSG-$(1)$ approximations are comparable, providing a fair representation of the STD and HSG-$(\infty)$ ones, which are very close from each other.
We observe that unlike the full-grid approximations, sparse-grid approximations do not preserve the non-negativity of the solution. 

\subsection{Performance}
\label{sec:3.2}
We now investigate the performance of the method and compare it to the STD- and SGCT-PIC algorithms. The results shall provide insights into the computational benefits of the approach but are not necessarily representative of the real gains to expect in large plasma applications. As demonstrated in~\cite{deluzet23}, the gains of sparse-grid approximations are significantly larger in three-dimensional configurations with a dedicated implementation and  even further for parallel implementations.

\paragraph{Settings}
We restrict our study to sequential executions on a single CPU core. All experiments are performed on a single Apple M4 core with $32\,\mathrm{GB}$ of RAM. The methods are implemented in a code developed in Python and C++\footnote{The code is available at \url{https://gitlab.inria.fr/cguillet/sg-pic}}. The linear systems are solved either using a Cholesky factorization from the \texttt{scipy.sparse} library~\cite{scipy20} for the SGCT- and HSG-PIC algorithms, or using a flexible GMRES solver with an algebraic multigrid preconditioner from the \texttt{PETSc} library~\cite{petsc-user-ref} for the STD-PIC algorithm.

\paragraph{Manufactured solutions}
We consider the manufactured solution test case for which we know the exact solution at all time, which makes it possible to have a precise assessment of the approximation error. For this benchmark, the Debye length is resolved provided the mesh size satisfies $h \leq 1/60$.
 We compare the computational costs of the different methods in terms of algorithmic complexity, measured by $N$, the number of particles and $|N_h|$ the number of mesh nodes, and wall-clock execution time. 

We report in~\cref{tab:1} the computational costs required to reduce the $\mathrm{L}^2$ norm approximation error of the charge density at time $t=0.3$ below the threshold $\varepsilon=10^{-2}$. For each configuration, we report the time execution per time step, averaged over the $15$ realizations, for the three most computationally intensive steps of the algorithm: the charge density approximation (projection), the field interpolation at the particle positions (interpolation), and the particle advance (push).  The cost of the resolution of Poisson equation is negligible here.

\begin{table}[ht!]
\centering
\caption{Computational costs required to achieve a $\mathrm{L}^2$ norm error of
$\varepsilon=10^{-2}$ at time $t^\kappa=0.3$ for the manufactured solution.
Complexity and mean execution time per iteration (averaged over 15 iterations).}
\label{tab:1}
\setlength{\tabcolsep}{4pt}
\begin{tabular}{c|c|cc|cc|c|ccc}
\hline
method &
$\|\rho_h^{(\star),\kappa}-\rho(t^\kappa)\|_{2}$ &
$h$ & $p$ &
$N$ & $|N_h|$ &
\multicolumn{4}{c}{time per iteration [s]}
\\
\cline{7-10}
&
&
&
&
&
&
total & projection & interpolation & push
\\
\hline

STD
&0.0094
&$2^{-6}$&1
&28\,672\,000&4\,096
&4.08&1.67&0.08&1.90
\\

HSG-$(\infty)$
&0.0097
&$2^{-6}$&1
&40\,960\,000&4\,096
&29.4&20.1&6.4&3.09
\\

SGCT
&0.0100
&$2^{-7}$&1
&17\,920\,000&2\,560
&15.1&13.3&0.63&1.17
\\

SGCT
&0.0099
&$2^{-6}$&3
&3\,264\,000&1\,088
&2.89&2.48&0.28&0.20
\\

HSG-$(1)$
&0.0095
&$2^{-6}$&1
&2\,816\,000&256
&0.94&0.49&0.24&0.18
\\

\hline
\end{tabular}

\end{table}
The algorithms achieve the targeted accuracy $\varepsilon=10^{-2}$, but at substantially different computational costs. 
The sparse-grid configurations require significantly fewer particles and mesh nodes than the full-grid STD method, confirming the theoretical
complexity reduction established in \cref{prop:4}, even in the two-dimensional setting considered here, which is the least favorable for sparse-grid methods and does not yet benefit from a dedicated high-performance implementation. The most efficient method is HSG-$(1)$: it requires approximately ten times fewer particles than STD and sixteen times fewer mesh nodes, while running more than four times faster. Compared to the best SGCT configuration (cubic splines, $p = 3$), HSG-$(1)$ uses a comparable number of particles but $\times 4$ fewer mesh nodes, and is approximately three times faster overall, the gain being concentrated in the projection and interpolation steps. Conversely, HSG-$(\infty)$ is the slowest configuration despite sharing the same mesh resolution as STD: its larger approximation space induces a higher cost per particle in the projection and interpolation steps, which dominate its total runtime.

\section{Conclusion and Perspectives}
\label{sec:4}
In this paper, we have introduced the hierarchical sparse-grid particle-in-cell (HSG-PIC) method, an explicit-in-time particle approximation for the numerical simulation of the Vlasov--Poisson system. Conceptually distinct from both standard cell-based PIC and sparse-grid combination technique (SGCT) frameworks, the HSG-PIC method replaces the traditional charge deposition step with a direct Galerkin projection of the raw Monte Carlo density estimator onto a hierarchical spline approximation space, within a variational formulation of the field equations.

The main contribution of this work is a probabilistic error analysis. By decomposing the numerical error into a deterministic, grid-based bias and a stochastic variance component, we have established convergence rates for both contributions: for an approximation space spanned by B-splines of degree $p$ in $d$ dimensions, the $\mathrm{L}^2$-norm of the bias scales as $\mathcal{O}\big(h^{p+1}|\log h|^{d-1}\big)$, while the statistical error scales in the $\mathrm{L}^1$-norm as $\mathcal{O}\big(|\log h|^{(d-1)/2}(Nh)^{-1/2}\big)$. These estimates show
that HSG-PIC attains an asymptotic accuracy comparable to that of high-order SGCT-PIC methods \cite{deluzet22-1,deluzet23}, within a unified variational framework.

Numerical experiments, ranging from manufactured solutions to the highly anisotropic diocotron instability, corroborate the theoretical error bounds. The results indicate that the hierarchical formulation captures fine-scale kinetic structures and exhibits good energy-conservation behavior, while substantially reducing the number of macro-particles required to reach a prescribed accuracy, thereby mitigating the computational cost.

The present tests indicate that HSG-PIC matches the accuracy of state-of-the-art SGCT-PIC methods; a definitive assessment of its computational efficiency, however, requires dedicated developments. Parallel optimizations and a high-performance three-dimensional implementation on modern CPU/GPU architectures remain to be addressed in order to establish fair runtime comparisons with existing codes.

Beyond efficiency considerations, the variational and hierarchical structure of the HSG-PIC approach offers two extensions that are difficult to accommodate within the SGCT framework. First, the hierarchical increments provide
natural local error indicators, opening the way to spatially adaptive
mesh refinement for the resolution of localized, non-smooth structures, a
limitation of grid-combination methods identified in precedent works~\cite{deluzet22}. Second, the
formulation extends naturally to non-rectangular geometries through
B-spline-based geometric mappings, as proposed in~\cite{guillet25-1}. Both extensions are under active
investigation and will be presented in a forthcoming paper.

\section*{Acknowledgements}
%This work has been carried out within the framework of the EUROfusion Consortium, funded by the European Union via the
%Euratom Research and Training Programme (Grant Agreement No 101052200 — EUROfusion). 
%Views and opinions expressed are however those of the author only and do not necessarily re­flect those of the European Union or the European Commission. Neither the European Union nor the European Commission can be held responsible for them.\\
This work has been supported by a grant from the French National Research Agency (ANR) project MATURATION (reference
ANR-22-CE46-0012) \\
Support from the FrFCM (Fédération de recherche pour la Fusion par Co­finement Magnétique) in the frame of the SPARCLE
project (SParse grid Acceleration for the paRticle-in-CelL mEthod) is also acknowledged.
%#######################################################################################
%#######################################################################################
%Appendix 
\appendix

\section{Proof of \cref{thm:0}}
\label{apd:1}
To simplify the presentation, the time dependence of all quantities is omitted throughout this appendix. In the remainder of this section, we assume that the particle positions $\bm{x}_s$ ($s=1, \ldots, N$) are independent and identically distributed (i.i.d.) realizations of a random variable $\bm{X}$ with density $p(\bm{x}) = \rho(\bm{x})/\mathcal{M}$, where $\mathcal{M} = \int_\Omega \rho(\bm{\xi}) \, d\bm{\xi}$ is the total mass. Furthermore, $\mu(\Omega) = \int_\Omega d\bm{x}$ denotes the measure of the domain $\Omega$, which is assumed to be finite. The frameworks used to derive the bound estimates is resumed from the combination technique and the characterization of $V^{(1)}_h$ stated by \cref{prop:3}.
\subsection{Proofs of \cref{lemma:Density:Estimator:Expected:Value} and \cref{thm:0:1}}

\begin{proof}[Proof of \cref{lemma:Density:Estimator:Expected:Value}]
Owing to the transfer theorem in the distributional sense, the continuous density $p(\bm{x})$ is linked to the Dirac delta distribution via:
\begin{equation}
\label{eq:transfer_dirac}
\mathbb{E}\big[\delta(\bm{x}-\bm{X})\big] = \int_{\Omega} \delta(\bm{x}-\bm{\xi}) p(\bm{\xi})\mathrm{d}\bm{\xi} = p(\bm{x}) = \frac{\rho(\bm{x})}{\mathcal{M}}\,.
\end{equation}
Since the macro-particles are independent identically distributed (i.i.d) realizations of $\bm{X}$, their individual expectations satisfy $\mathbb{E}[\delta(\bm{x}-\bm{x}_s)] = \mathbb{E}[\delta(\bm{x}-\bm{X})]$ for all $s=1,\ldots,N$. Utilizing the linearity of the expectation operator along with the uniform weight definition, we obtain:
\[
\mathbb{E}[\rho_N(\bm{x})] = \mathbb{E}\left[ \sum_{s=1}^N w_s \delta\big(\bm{x}-\bm{x}_s\big) \right] = \frac{\mathcal{M}}{N}\sum_{s=1}^N \mathbb{E}\big[\delta(\bm{x}-\bm{x}_s)\big] = \mathcal{M} \, \mathbb{E}\big[\delta(\bm{x}-\bm{X})\big]= \rho(\bm{x})\,,
\]
confirming that $\text{Bias}[\rho_N(\bm{x})] = 0$.
 Within the same probabilistic framework, the expectation of $\rho_{h,N}$ evaluates to:
\begin{equation*}
\mathbb{E}[\rho_{h,N}(\bm{x})] %&= \mathbb{E}\left[ \sum_{s=1}^N w_s W_h\big(\bm{x}-\bm{x}_s(t)\big) \right] = \frac{\mathcal{M}}{N} \sum_{s=1}^N \mathbb{E}\big[ W_h(\bm{x}-\bm{x}_s(t)) \big] \nonumber \\
= \mathcal{M} \, \mathbb{E}\big[ W_h(\bm{x}-\bm{X}) \big] = \int_{\Omega} W_h(\bm{x}-\bm{\xi}) \rho(\bm{\xi}) \mathrm{d}\bm{\xi} \nonumber  = (W_h * \rho)(\bm{x})\,.
\end{equation*}
\end{proof}

\begin{proof}[Proof of \cref{thm:0}, \cref{thm:0:1}]
Owing to \cref{lemma:Density:Estimator:Expected:Value}, we have $\esp[\rho_{N}]= \rho$.
Since the density $\rho\in \mathrm{H}^{q}_{\mathrm{mix}}(\Omega)$, we can apply~\cref{lem:7.2} of~\cref{lem:7} to $\esp[\rho_{N}]$ to obtain the result for $(\star)=(1)$ and~\cref{lem:7.1} of~\cref{lem:7} to obtain the result for $(\star)=(\infty)$.
\end{proof}

\subsection{Proof of \cref{thm:0:2}}
To prove \cref{thm:0}, \cref{thm:0:2}, we have to first introduce three auxiliary lemmas.

\begin{lemma} 
\label{lem:1}
For all $\bm{x}\in \Omega$, it holds
\[
\var\big[\Pi_{h}^{(\star)} \rho_{N}(\bm{x})\big] \leq \frac{\mathcal{M}^2}{N} \esp\big[(\Pi_{h}^{(\star)}\delta (\bm{x} - \bm{X}))^2\big].
\]
\end{lemma}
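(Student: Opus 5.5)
The plan is to write $\Pi_{h}^{(\star)}\rho_N$ as a sum of i.i.d. random functions and use the standard variance identity for such sums. Linearity of the Galerkin projection (\cref{prop:galerkin_projection}) and $w_s = \mathcal{M}/N$ give, pointwise in $\bm{x}$,
\[
\Pi_{h}^{(\star)}\rho_N(\bm{x}) = \frac{\mathcal{M}}{N}\sum_{s=1}^N Y_s(\bm{x}), \qquad Y_s(\bm{x}) := \Pi_{h}^{(\star)}\delta(\bm{x}-\bm{x}_s).
\]
Each $Y_s(\bm{x})$ is a genuine function of $\bm{x}$, since it lies in the finite-dimensional space $V_h^{(\star)}(\Omega)$. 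Concretely, if $\{\psi_k\}$ is any basis of $V_h^{(\star)}(\Omega)$ with Gram matrix $\mathbb{M}$, then $Y_s(\bm{x}) = \sum_{k,k'} (\mathbb{M}^{-1})_{kk'}\psi_{k'}(\bm{x}_s)\psi_k(\bm{x})$. This is a bounded, measurable (indeed piecewise polynomial) function of the random position $\bm{x}_s$, so every moment below is finite and well defined.

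The key steps, in order, are as follows.

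\emph{Step 1 (independence).} Because the $\bm{x}_s$ are i.i.d. copies of $\bm{X}$, the $Y_s(\bm{x})$ are i.i.d. copies of $Y(\bm{x}) := \Pi_{h}^{(\star)}\delta(\bm{x}-\bm{X})$ for each fixed $\bm{x}$.

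\emph{Step 2 (variance of the sum).} Independence kills the covariances, so
\[
\var\big[\Pi_{h}^{(\star)}\rho_N(\bm{x})\big] = \frac{\mathcal{M}^2}{N^2}\sum_{s=1}^N \var[Y_s(\bm{x})] = \frac{\mathcal{M}^2}{N}\var[Y(\bm{x})].
\]

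\emph{Step 3 (drop the mean).} By definition,
\[
\var[Y(\bm{x})] = \esp[Y(\bm{x})^2] - \big(\esp[Y(\bm{x})]\big)^2 \leq \esp[Y(\bm{x})^2],
\]
which is exactly the claimed bound.

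One can also identify the discarded mean term. Exchanging expectation and the finite sum gives $\esp[Y(\bm{x})] = \Pi_{h}^{(\star)}p(\bm{x})$, the same transfer argument as in the proof of \cref{lemma:Density:Estimator:Expected:Value}. The bound does not need this identification.

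The only point requiring care is justifying these manipulations for the projected Dirac measure. I expect this to be handled entirely by the explicit basis representation above. It shows that $Y(\bm{x})$ is a polynomial-type expression in the bounded quantities $\psi_{k'}(\bm{X})$, so it is square-integrable with respect to $p$, and linearity of expectation commutes with the finite sums. Beyond this, the proof is routine, and no property specific to the sparse-grid truncation is used. The lemma therefore holds uniformly for $(\star)=(1)$ and $(\star)=(\infty)$.
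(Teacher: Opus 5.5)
Your proof is correct and follows the paper's argument: linearity of $\Pi_h^{(\star)}$ with $w_s=\mathcal{M}/N$, then the i.i.d.\ variance identity giving $\frac{\mathcal{M}^2}{N}\var[\Pi_h^{(\star)}\delta(\bm{x}-\bm{X})]$, then discarding the nonnegative squared mean. Your Gram-matrix representation, which shows that all moments are finite, is a useful justification that the paper leaves implicit.
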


\begin{lemma} 
\label{lem:2}
For all  $v_h(\cdot,\bm{X})\in V_{h}^{(1)}(\Omega)$ defined by
\[
v_h(\bm{x},\bm{X}) = \sum_{\bm{\ell}\in \mathscr{L}_h }c_{\bm{\ell}} \sum_{\bm{j} \in I_{h_{\bm{\ell}}}} \alpha_{\bm{j}}(\bm{X})\varphi_{h_{\bm{k}},\bm{j}}^p(\bm{x}),
\]
we have the relation
\begin{align}
\label{lem:2:1}
\|\esp[\Pi_{h}^{(\star)}\delta (\cdot - \bm{X}) v_h(\cdot,\bm{X})]\|_{\mathrm{L}^1(\Omega)} \leq \sum_{\bm{\ell}\in \mathscr{L}_h }|c_{\bm{\ell}}| \sum_{\bm{j} \in I_{h_{\bm{\ell}}}} \esp[\alpha_{\bm{j}}(\bm{X})],
\end{align}
and 
\begin{align}
\label{lem:2:2}
\sum_{\bm{j} \in I_{h_{\bm{\ell}}}} \esp[\alpha_{\bm{j}}(\bm{X})]  \lesssim \prod_{i=1}^d h_{\ell_i}^{-1}.
\end{align}
\end{lemma}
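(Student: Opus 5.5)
The plan is to treat the two claims of \cref{lem:2} separately, both resting on the combination-technique representation of $V_h^{(1)}(\Omega)$ from \cref{prop:3}. For \eqref{lem:2:1}, substitute the expansion of $v_h(\cdot,\bm{X})$ into the expectation and use linearity of $\esp$ and of the integral:
\[
\esp\big[\Pi_{h}^{(\star)}\delta(\cdot-\bm{X})\, v_h(\cdot,\bm{X})\big]
= \sum_{\bm{\ell}\in\mathscr{L}_h} c_{\bm{\ell}} \sum_{\bm{j}\in I_{h_{\bm{\ell}}}} \esp\big[\alpha_{\bm{j}}(\bm{X})\, \Pi_{h}^{(\star)}\delta(\cdot-\bm{X})\,\varphi^p_{h_{\bm{\ell}},\bm{j}}\big].
\]
Taking the $\mathrm{L}^1$ norm and applying the triangle inequality gives $|c_{\bm{\ell}}|$ factors. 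It then remains to show
\[
\big\|\esp\big[\alpha_{\bm{j}}(\bm{X})\,\Pi_h^{(\star)}\delta(\cdot-\bm{X})\,\varphi^p_{h_{\bm{\ell}},\bm{j}}\big]\big\|_{\mathrm{L}^1}\le \esp[\alpha_{\bm{j}}(\bm{X})].
\]
I would bound the norm of the expectation by the expectation of the norm (Fubini/Jensen). Then I would use two facts about B-splines: nonnegativity, and the partition of unity $\varphi^p_{h_{\bm{\ell}},\bm{j}}\le 1$. The intended statement implicitly takes $\alpha_{\bm{j}}\ge 0$; if it does not, I would replace $\alpha_{\bm{j}}$ by $|\alpha_{\bm{j}}|$ throughout. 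This reduces the bound to the requirement that $\int_\Omega |\Pi_h^{(\star)}\delta(\bm{x}-\bm{X})|\,d\bm{x}$ be bounded uniformly.

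That uniform $\mathrm{L}^1$ bound on the projected Dirac mass is the key and hardest step. I would first try to obtain it from $\mathrm{L}^\infty$-stability of the spline $\mathrm{L}^2$ projection, a known result for tensor B-spline spaces. By duality, $\mathrm{L}^\infty$-stability is equivalent to $\mathrm{L}^1$-boundedness of the kernel $\bm{x}\mapsto\Pi_h^{(\star)}\delta(\bm{x}-\bm{y})$, uniformly in $\bm{y}$. For the full tensor space $V_h^{(\infty)}$ this is standard (Douglas--Dupont--Wahlbin type, or de Boor's bound for splines on uniform meshes), using exponential decay of the inverse Gram matrix. For $V_h^{(1)}$ I am less sure: the sparse-grid $\mathrm{L}^2$ projection is not a combination of component projections, and its $\mathrm{L}^\infty$ norm may grow like $|\log h|^{d-1}$. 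If it does, I would absorb that growth into the statement's constant, or move it into the final bound, and check that it does not spoil \cref{thm:0:2}.

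For \eqref{lem:2:2}, the coefficients $\alpha_{\bm{j}}(\bm{X})$ of a spline are controlled via a dual functional / quasi-interpolant bound. For a fixed level $\bm{\ell}$ and a fixed $\bm{X}$, only the $(p+1)^d$ basis functions whose support contains $\bm{X}$ matter. The de Boor stability estimate then gives
\[
|\alpha_{\bm{j}}(\bm{X})| \lesssim \prod_i h_{\ell_i}^{-1}\int_{\mathrm{supp}\,\varphi_{\bm{j}}} |v|,
\]
and summing over $\bm{j}$ is controlled by the bounded overlap of the supports. In the intended application, $v_h=\Pi_h^{(\star)}\delta(\cdot-\bm{X})$. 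There I would instead read the coefficients directly from the component-grid Gram systems: in each direction the inverse mass matrix of univariate B-splines has entries of size $h_{\ell_i}^{-1}$ with geometric decay, so each row sum is $\lesssim h_{\ell_i}^{-1}$. Tensorizing over directions and taking expectation over $\bm{X}$ yields $\prod_i h_{\ell_i}^{-1}$.
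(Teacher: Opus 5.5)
The gap is in your treatment of \eqref{lem:2:1}. You apply the triangle inequality and put absolute values inside the integral before using anything specific to $\Pi_h^{(\star)}$. That reduces everything to $\sup_{\bm y}\|\Pi_h^{(\star)}\delta(\cdot-\bm y)\|_{\mathrm{L}^1(\Omega)}$, i.e.\ to the $\mathrm{L}^\infty$-stability constant of the $\mathrm{L}^2$ projection. For $V_h^{(\infty)}$ this is a standard $h$-independent constant; you only get $\lesssim$ instead of $\leq$, which is harmless. For $V_h^{(1)}$, the case that matters, it is not $h$-independent, as you suspected. But, contrary to what you write, $\Pi_h^{(1)}=\sum_{\bm\ell\in\mathscr{L}_h}c_{\bm\ell}\,\Pi_{h_{\bm\ell}}$ holds exactly. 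The univariate $\mathrm{L}^2$ projections $P_k$ onto the nested spaces $S_{h_k}$ satisfy $P_kP_{k'}=P_{\min(k,k')}$, so the combination formula is exact for their tensor products. Hence $\|\Pi_h^{(1)}\|_{\mathrm{L}^\infty\to\mathrm{L}^\infty}$ is only bounded by $\sum_{\bm\ell}|c_{\bm\ell}|\,\|\Pi_{h_{\bm\ell}}\|_{\mathrm{L}^\infty\to\mathrm{L}^\infty}\lesssim|\log h|^{d-1}$. The growth is real: already for piecewise constants in $d=2$ the corresponding kernel has $\mathrm{L}^1$ mass of order $|\log h|$. Neither of your fallbacks is admissible. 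The constant in $\lesssim$ may not depend on $h$, and carrying the factor through turns the bound of \cref{thm:0:2} into $|\log h|^{d-1}(Nh)^{-1/2}$ instead of $(|\log h|^{d-1}/(Nh))^{1/2}$.

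The missing idea is that the paper never estimates $|\Pi_h^{(\star)}\delta|$ at all; it uses the defining relation of the projection. Every component basis function $\varphi^p_{h_{\bm\ell},\bm j}$, $\bm\ell\in\mathscr{L}_h$, lies in $V_h^{(\star)}(\Omega)$, so $\int_\Omega\Pi_h^{(\star)}\delta(\bm x-\bm X)\,\varphi^p_{h_{\bm\ell},\bm j}(\bm x)\,d\bm x=\varphi^p_{h_{\bm\ell},\bm j}(\bm X)\in[0,1]$. You were pushed into absolute values by aiming at the literal ``for all $v_h$'' form. The proof of \cref{thm:0:2} only needs $v_h=\Pi_h^{(\star)}\delta(\cdot-\bm X)$, which is the case the paper actually proves. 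Then $\esp[(\Pi_h^{(\star)}\delta(\cdot-\bm X))^2]\geq 0$, so its $\mathrm{L}^1$ norm is a plain integral. Expanding one factor via \cref{prop:3} and applying the identity above gives exactly $\sum_{\bm\ell}c_{\bm\ell}\sum_{\bm j}\esp[\alpha_{\bm j}(\bm X)\varphi^p_{h_{\bm\ell},\bm j}(\bm X)]\leq\sum_{\bm\ell}|c_{\bm\ell}|\sum_{\bm j}\esp[|\alpha_{\bm j}(\bm X)|]$, with constant one and no stability estimate.

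Your argument for \eqref{lem:2:2} is a genuinely different route and is sound. The paper integrates the component projection, using $\int_\Omega\varphi^p_{h_{\bm\ell},\bm j}\,d\bm x=\prod_k h_{\ell_k}$ and the smallness of $\mathcal{B}_{\bm\ell}$. That controls only the signed sum $\sum_{\bm j}\esp[\alpha_{\bm j}(\bm X)]$ and therefore leans on $\alpha_{\bm j}\geq 0$. Your inverse-Gram-matrix bound controls $\sum_{\bm j}|\alpha_{\bm j}|$, which is what is really needed, since these coefficients alternate in sign already for $p=1$. It does, however, presuppose that the $\alpha_{\bm j}$ are the coefficients of $\Pi_{h_{\bm\ell}}\delta(\cdot-\bm X)$. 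That is precisely the statement that the combination formula is exact for the $\mathrm{L}^2$ projection, which you denied in your first part.
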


\begin{lemma} 
\label{lem:4}
The following estimate holds
\[
\sum_{\bm{\ell}\in \mathscr{L}_h }|c_{\bm{\ell}}| \prod_{i=1}^d h_{\ell_i}^{-1} \lesssim h^{-1} |\log h|^{d-1}.\]
\end{lemma}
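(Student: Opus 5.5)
We need to bound $\sum_{\bm{\ell}\in \mathscr{L}_h}|c_{\bm{\ell}}| \prod_i h_{\ell_i}^{-1}$. The plan is to use the definitions directly. By \cref{eq:8}, the admissible set splits into the families $\mathscr{L}_{h,l}$ for $l=0,\dots,d-1$. Every $\bm{\ell}\in\mathscr{L}_{h,l}$ satisfies $|\bm{\ell}|_1 = n+(d-1)-l$, where $h=2^{-n}$. Since $h_{\ell_i}=2^{-\ell_i}$, the product is constant on each family:
\[
\prod_{i=1}^d h_{\ell_i}^{-1} = 2^{|\bm{\ell}|_1} = 2^{n+d-1-l} = 2^{d-1-l}\,h^{-1}.
\]
By \cref{eq:def:comb:coeff}, the coefficient is also constant on each family, with $|c_{\bm{\ell}}| = \binom{d-1}{l}$. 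The sum therefore reduces to
\[
\sum_{l=0}^{d-1} \binom{d-1}{l}\, 2^{d-1-l}\, h^{-1}\, \#\mathscr{L}_{h,l}.
\]

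It remains to count each family. The set $\mathscr{L}_{h,l}$ consists of the multi-indices with entries $\ell_j\ge 1$ summing to $m := n+d-1-l$. A stars-and-bars count gives $\#\mathscr{L}_{h,l} = \binom{m-1}{d-1} = \binom{n+d-2-l}{d-1}$. This is a polynomial in $n$ of degree $d-1$ with leading coefficient $1/(d-1)!$. Hence $\#\mathscr{L}_{h,l} \le (n+d)^{d-1}/(d-1)! \lesssim n^{d-1} = |\log h|^{d-1}$ for $n\ge1$, with a constant depending only on $d$. For $n$ small the bound is trivial after adjusting constants, taking $|\log h|\ge 1$ as implicitly assumed.

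Summing over the $d$ families, the $d$-dependent factor is $\sum_l \binom{d-1}{l}2^{d-1-l} = 3^{d-1}$, which gives the claimed bound $h^{-1}|\log h|^{d-1}$. The only point needing care is the counting step. One must check that no family exceeds the $n^{d-1}$ growth and treat the boundary case where $m<d$ makes a family empty, which only helps. Since $d$ is fixed, every binomial factor is absorbed into the $\lesssim$ constant, so there is no genuine obstacle.
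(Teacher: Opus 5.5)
Your proof is correct and follows the paper's argument: you split $\mathscr{L}_h$ into the families $\mathscr{L}_{h,l}$, note that both $|c_{\bm{\ell}}|=\binom{d-1}{l}$ and $\prod_i h_{\ell_i}^{-1}=2^{d-1-l}h^{-1}$ are constant on each family, count $\#\mathscr{L}_{h,l}=\binom{n+d-2-l}{d-1}$ by stars and bars, and bound this by $|\log h|^{d-1}$. Your version is slightly more careful than the paper's display, which starts the sum at $l=1$ and so omits the family $l=0$.
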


\begin{proof}[Proof of \cref{thm:0}, \cref{thm:0:2}]
If $(\star)=(1)$, we obtain the result by applying \cref{lem:1}, \cref{lem:2} with $\varphi_h(\cdot, \bm{X}) = \Pi_{h}^{(\star)}\delta (\cdot -\bm{X})$ and \cref{lem:4} and taking the square root of the variance. If $(\star)=(\infty)$, expressing $\varphi_h(\cdot,\bm{X})\in V_{h}^{(\infty)}(\Omega)$ as the combination of an unique component grid of mesh size $h=(h,\ldots,h)$, using \cref{lem:1} and \cref{lem:2}, we obtain the result.
\end{proof}

\begin{proof}[Proof of \cref{lem:1}]
The particles being i.i.d. and owing to the linearity of the projection operator, the variance can be expressed as
\begin{align*}
\var[\Pi_{h}^{(\star)} \rho_{N}(\bm{x})] = \var\left[\frac{\mathcal{M}}{N}\sum_{s=1}^N \Pi_{h}^{(\star)}\delta (\bm{x} - \bm{X})\right]
= \frac{\mathcal{M}^2}{N} \var\left[ \Pi_{h}^{(\star)}\delta (\bm{x} - \bm{X})\right]. \label{lem:1:eq:2}
\end{align*}
The proof is concluded noting that $\big(\esp[\Pi_{h}^{(\star)} \rho_{N}(\bm{x})]\big)^2 \geq 0$ which yields the desired bound.
\end{proof}

\begin{proof}[Proof of \cref{lem:2}, \cref{lem:2:1}] The proof is decomposed into three steps.
  \begin{enumerate}
    \item We use the SGCT characterization of the approximation space, given by~\cref{prop:3}, to the Galerkin projection of the Delta dirac function  $ \Pi_{h}^{(\star)} \delta(\cdot-\bm{X})\subset V_{h}^{(\star)}(\Omega)$ so that we have, for any $x\in \Omega$
\begin{equation}\label{eq:ansatz:pi:delta}
  \Pi_{h}^{(\star)} \delta(\bm{x}-\bm{X}) = \sum_{\bm{\ell}\in \mathscr{L}_h }c_{\bm{\ell}} \sum_{\bm{i} \in I_{h_{\bm{\ell}}}}\alpha_{\bm{i}}(\bm{X}) \varphi_{h_{\bm{\ell}},\bm{i}}^p(\bm{x}).
\end{equation}
Note that, if $(\star)=(\infty)$, the level index set contains only one unique term.
\item This decomposition is introduced within the computation of one term of the variance of the Galerkin projection, yielding
\begin{equation}
  \esp\big[\big(\Pi_{h}^{(\star)}\delta (\bm{x} - \bm{X})\big)^2\big]= \sum_{\bm{\ell}\in \mathscr{L}_h }c_{\bm{\ell}}\sum_{\bm{j} \in I_{h_{\bm{\ell}}}} \int_{\Omega}\alpha_{\bm{j}}(\bm{\xi}) \left( \sum_{\bm{k}\in \mathscr{L}_h }c_{\bm{k}}\sum_{\bm{i} \in I_{h_{\bm{k}}}}\alpha_{\bm{i}}(\bm{\xi})\varphi_{h_{\bm{k}},\bm{i}}^p(\bm{x})\varphi_{h_{\bm{\ell}},\bm{j}}^p(\bm{x}) \right) p(\bm{\xi}) \, d\bm{\xi} .
\end{equation}
Integrating with respect to $\bm{x}$ we obtain:
\begin{equation}\label{lem:2:eq0}
  \begin{multlined}[0.85\textwidth]
\big\|\esp\big[\big(\Pi_{h}^{(\star)}\delta (\cdot - \bm{X})\big)^2\big]\big\|_{\mathrm{L}^1(\Omega)} =\\
\hfill {\sum_{\bm{\ell}\in \mathscr{L}_h }c_{\bm{\ell}}\sum_{\bm{j} \in I_{h_{\bm{\ell}}}} \int_{\Omega}\alpha_{\bm{j}}(\bm{\xi}) \left( \sum_{\bm{k}\in \mathscr{L}_h }c_{\bm{k}}\sum_{\bm{i} \in I_{h_{\bm{k}}}}\alpha_{\bm{i}}(\bm{\xi})\int_\Omega \varphi_{h_{\bm{k}},\bm{i}}^p(\bm{x})\varphi_{h_{\bm{\ell}},\bm{j}}^p(\bm{x})  \, d\bm{x} \right) p(\bm{\xi}) \, d\bm{\xi}}.
\end{multlined}
\end{equation}
\item By definition of the Galerkin projection of the delta Dirac function, the following identity holds for any $\bm{\ell}\in\mathscr{L}_h, ~ \bm{j}\in I_{h_{\bm{\ell}}}$
\[
\big(\Pi_{h}^{(\star)}\delta(\cdot-\bm{X}),\varphi_{h_{\bm{\ell}},\bm{j}}^p\big)_{\mathrm{L}^2} = \big\langle \delta(\cdot-\bm{X}),\varphi_{h_{\bm{\ell}},\bm{j}}^p \big\rangle_{\mathrm{H}^{-p}_{\mathrm{mix}}, \, \mathrm{H}^p_{\mathrm{mix}}},
\]
from which, thanks to the Ansatz defined by \cref{eq:ansatz:pi:delta}, the following identity is recovered
\begin{align} 
\sum_{\bm{k}\in \mathscr{L}_h }c_{\bm{k}} \sum_{\bm{i} \in I_{h_{\bm{k}}}}\alpha_{\bm{i}}(\bm{X}) \int_{\Omega} \varphi_{h_{\bm{k}},\bm{i}}^p(\bm{x})\varphi_{h_{\bm{\ell}},\bm{j}}^p(\bm{x}) d \bm{x} &= \int_{\Omega} \delta(\bm{x}-\bm{X})\varphi_{h_{\bm{\ell}},\bm{j}}^p(\bm{x}) d \bm{x}=\varphi_{h_{\bm{\ell}},\bm{j}}^p(\bm{X}). \label{lem:2:eq:1}
\end{align}
\end{enumerate}
The proof is concluded by inserting \cref{lem:2:eq:1} into \cref{lem:2:eq0}, to write
\begin{align*}
\big\|\esp\big[\big(\Pi_{h}^{(\star)}\delta (\bm{x} - \bm{X})\big)^2\big]\big\|_{\mathrm{L}^1(\Omega)} 
& = \sum_{\bm{\ell}\in \mathscr{L}_h }c_{\bm{\ell}}\sum_{\bm{j} \in I_{h_{\bm{\ell}}}}  \int_{\Omega} \alpha_{\bm{j}}(\bm{\xi}) \varphi_{h_{\bm{\ell}},\bm{j}}^p(\bm{\xi}) p(\bm{\xi}) \, d\bm{\xi}.
\end{align*}
Owing to the properties, $\|\varphi_{h_{\bm{\ell}},\bm{j}}^p\|_{L^\infty(\Omega)}\leq 1$ and $\alpha_{\bm{j}} \geq 0$, $ \quad \forall \bm{\ell}\in \mathscr{L}_h, \bm{j} \in I_{h_{\bm{\ell}}}$ and  the bound is obtained
\begin{align*}
\big\|\esp\big[\big(\Pi_{h}^{(\star)}\delta (\bm{x} - \bm{X})\big)^2\big]\big\|_{\mathrm{L}^1(\Omega)} 
& \leq\sum_{\bm{\ell}\in \mathscr{L}_h }|c_{\bm{\ell}}|\sum_{\bm{j} \in I_{h_{\bm{\ell}}}} \esp\big[\alpha_{\bm{j}}(\bm{X})\big].
\end{align*}
\end{proof}

\begin{proof}[Proof of \cref{lem:2},  \cref{lem:2:2}] The proof relies on establishing a bound for the integrated projection error (or bias), denoted by $\mathcal{B}_{\bm{\ell}}$ and defined as
  $$\mathcal{B}_{\bm{\ell}} = \int_{\Omega} \big(\operatorname{I}-\Pi_{h_{\bm{\ell}}}\big) \left(\esp[\delta(\bm{x}-\bm{X})]\right)d\bm{x}.$$

First, from Cauchy-Schwarz inequality, we have
\begin{align}
\label{lem:3:eq:1}
\left |\mathcal{B}_{\bm{\ell}}\right| %\leq \left\| \big(\operatorname{I}-\Pi_{h_{\bm{\ell}}}\big) \left(\esp[\delta(\bm{x}-\bm{X})]\right)\right\|_{\mathrm{L}^1(\Omega)} 
\leq \mu(\Omega)^{1/2} \left\| \big(\operatorname{I}-\Pi_{h_{\bm{\ell}}}\big) \left(\esp[\delta(\cdot-\bm{X})]\right)\right\|_{\mathrm{L}^2(\Omega)}.
\end{align}
Owing to \cref{eq:transfer_dirac}, the expectation corresponds to scaled continuous density, i.e., $\esp[\delta(\cdot-\bm{X})]= \mathcal{M}^{-1} \rho$. Consequently, applying proposition 3.1, from \cite{guillet25-1} leads to the following bound:
\begin{align}
\label{lem:3:eq:2}
\left |\mathcal{B}_{\bm{\ell}}\right| \lesssim \sum_{i=1}^d h_{\ell_i}^{p+1}\|\rho\|_{\mathrm{H}^{p+1}(\Omega)}.
\end{align}

Second, using the characterization defined by \cref{eq:ansatz:pi:delta} along with Fubini's theorem, 
 the integrated bias can be recast as
\begin{align}\label{eq:Bl:recast}
\mathcal{B}_{\bm{\ell}} &=%\Fab{\int_\Omega \frac{\rho(\bm{x})}{\mathcal{M}} d\bm{x}}
1  -  \sum_{\bm{j} \in I_{h_{\bm{\ell}}}} \left(\int_{\Omega} \varphi_{h_{\bm{\ell}},\bm{j}}^p(\bm{x}) \,d\bm{x} \right)\int_{\Omega} \alpha_{\bm{j}}(\bm{\xi})  p(\bm{\xi})d\bm{\xi} = 1 -  \sum_{\bm{j} \in I_{h_{\bm{\ell}}}} \left(\int_{\Omega} \varphi_{h_{\bm{\ell}},\bm{j}}^p(\bm{x}) \,d\bm{x}\right)\esp[\alpha_{\bm{j}}(\bm{X})].
\end{align}
The $d$-dimensional B-spline basis functions $\varphi_{h_{\bm{\ell}},\bm{i}}^p$ are constructed via tensor products of univariate cardinal B-splines. Since a standard cardinal B-spline integrates to one over its support $[-\frac{p+1}{2},\frac{p+1}{2}]$, a straightforward change of variables accounts for the grid spacing in each dimension, yielding
\begin{equation}\label{eq:bspline:integral}
  \int_{\Omega} \varphi_{h_{\bm{\ell}},\bm{i}}^p(\bm{x}) \, d\bm{x} = \prod_{k=1}^d h_{\ell_k}, \quad \forall \bm{i}\in I_{h_{\bm{\ell}}}.
\end{equation}

Finally, substituting the normalization property \eqref{eq:bspline:integral} into \eqref{eq:Bl:recast} and utilizing the bound established in \cref{lem:3:eq:2}, we obtain
\[
\sum_{\bm{j} \in I_{h_{\bm{\ell}}}} \esp[\alpha_{\bm{j}}(\bm{X})] \lesssim \frac{1+\sum_{i=1}^dh_{\ell_i}^{p+1}\|\rho\|_{\mathrm{H}^{p+1}(\Omega)}}{\prod_{i=1}^d h_{\ell_i}}\lesssim \prod_{i=1}^d h_{\ell_i}^{-1},
\]
which concludes the proof.
\end{proof}

\begin{proof}[Proof of \cref{lem:4}]
By using the definition of the admissible index set $\mathscr{L}_h$, we have
\begin{align*}
\sum_{\bm{\ell}\in \mathscr{L}_h }|c_{\bm{\ell}}| \prod_{i=1}^d h_{\ell_i}^{-1} &=h^{-1} \sum_{l=1}^{d-1}2^{d-1-l}\left|(-1)^l \right|\left|\binom{d-1}{l}\right|  \sum_{|\bm{\ell}|_1=\log h +d-1-l} 1 \\
&= h^{-1} \sum_{l=1}^{d-1}2^{d-1-l}\left|(-1)^l \right|\left|\binom{d-1}{l}\right|  \binom{\log h +d-2-l}{d-1}.
\end{align*}
We can conclude by observing that
\[
 \binom{\log h +d-2-l}{d-1} \lesssim |\log h|^{d-1}.
\]
\end{proof}

\subsection{Proof of \cref{thm:1:1}}
%\Fab{\noindent\textbf{Change of notation.}
\paragraph{Change of notation}
So far, the quantities $\Phi_{h,N}^{(\star)}$, $l_{h,N}$, and related objects have been treated as deterministic, with the particle positions $\bm{x}_s$ regarded as fixed. To establish the bias estimate for the electric field, however, we must adopt a probabilistic viewpoint: the particle positions $\bm{x}_s$ are i.i.d.\ realizations of the random variable $\bm{X}$, so that any mesh-based quantity depending on the particle configuration $(\bm{x}_1,\ldots,\bm{x}_N)$ becomes itself a random variable, and its expectation is well-defined.

In particular, the discrete electric potential $\Phi_{h,N}^{(\star)}$ and the discrete linear form $l_{h,N}$ inherit this random character through their dependence on the particle positions. To make this dependency explicit and to avoid any confusion with the deterministic notation used in the rest of the document, we write $\Phi_{h,N}^{(\star)}(\cdot,\bm{X})$ and $l_{h,N}(v_h,\bm{X})$ throughout this subsection to emphasize that these quantities are functions of the random variable $\bm{X}$. The expected electric potential $\esp[\Phi_{h,N}^{(\star)}(\cdot,\bm{X})]$ is therefore a deterministic function of the spatial variable $\bm{x}$ alone, and the bias of the discrete electric potential is defined as
\[
\bias\big[\Phi_{h,N}^{(\star)}\big] := \esp\big[\Phi_{h,N}^{(\star)}(\cdot,\bm{X})\big] - \Phi.
\]
To prove \cref{thm:0}, \cref{thm:1:1}, we rely on the following two auxiliary lemmas.

\begin{lemma}
\label{lem:6}
Let $\rho \in \mathrm{H}^q_{\mathrm{mix}}(\Omega)$ for some $q \in \mathbb{N}$. Then the
solution of \cref{thm:3:eq:2} satisfies $\Phi \in \mathrm{H}^{q}_{\mathrm{mix}}(\Omega)
\cap \mathrm{L}^2_0(\Omega)$, with
\[
\|\Phi\|_{\mathrm{H}^{q}_{\mathrm{mix}}(\Omega)} \lesssim
\|\rho\|_{\mathrm{H}^q_{\mathrm{mix}}(\Omega)}.
\]
\end{lemma}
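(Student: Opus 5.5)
We work in Fourier variables on the torus $\mathbb{T}^d$, which is the natural setting since $\Omega$ is periodic. Write $\rho(\bm{x}) = \sum_{\bm{k}\in\mathbb{Z}^d} \hat\rho_{\bm{k}} e^{2\pi i \bm{k}\cdot\bm{x}/L}$. Solvability of \cref{thm:3:eq:2} requires the compatibility condition $\hat\rho_{\bm{0}} = \mathcal{M}/\mu(\Omega) = 1$. Under this condition the zero-mean weak solution is
\[
\hat\Phi_{\bm{0}} = 0,\qquad \hat\Phi_{\bm{k}} = -\frac{L^2}{4\pi^2|\bm{k}|_2^2}\,\hat\rho_{\bm{k}}\quad(\bm{k}\neq \bm{0}),
\]
where $|\bm{k}|_2$ is the Euclidean norm. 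The sign convention is irrelevant for norms. Uniqueness in $V$ follows from Lax--Milgram, as recalled after \cref{thm:3:eq:2}, so this series is indeed the solution.

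The key step is to express the mixed Sobolev norm through Fourier weights. By Parseval, for $|\bm{\alpha}|_\infty\le q$ we have
\[
\|D^{\bm{\alpha}} u\|_{\mathrm{L}^2(\Omega)}^2 = \mu(\Omega)\sum_{\bm{k}} \prod_{i=1}^d \Big(\frac{2\pi |k_i|}{L}\Big)^{2\alpha_i} |\hat u_{\bm{k}}|^2.
\]
Summing over all such $\bm{\alpha}$ gives
\[
\|u\|_{\mathrm{H}^q_{\mathrm{mix}}(\Omega)}^2 \simeq \sum_{\bm{k}} w_q(\bm{k})\,|\hat u_{\bm{k}}|^2,\qquad w_q(\bm{k}) := \prod_{i=1}^d (1+|k_i|^2)^{q}.
\]
The equivalence constants depend only on $d$, $q$ and $L$. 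Each derivative $D^{\bm{\alpha}}$ with $|\bm{\alpha}|_\infty\le q$ appears in exactly one semi-norm $|\cdot|_{\mathrm{H}^m_{\mathrm{mix}}}$ with $m=|\bm{\alpha}|_\infty$. Also, the product of the factors $\sum_{\alpha_i=0}^q |k_i|^{2\alpha_i}$ is equivalent to $w_q(\bm{k})$.

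The estimate then follows from the pointwise bound on the Fourier multiplier: for $\bm{k}\neq\bm{0}$,
\[
|\hat\Phi_{\bm{k}}| = \frac{L^2}{4\pi^2|\bm{k}|_2^2}\,|\hat\rho_{\bm{k}}| \le \frac{L^2}{4\pi^2}\,|\hat\rho_{\bm{k}}|,
\]
since $|\bm{k}|_2\ge 1$ on $\mathbb{Z}^d\setminus\{\bm{0}\}$. The mode $\bm{k}=\bm{0}$ contributes nothing. Multiplying by $w_q(\bm{k})$, summing, and using the norm equivalence gives
\[
\|\Phi\|_{\mathrm{H}^q_{\mathrm{mix}}(\Omega)}\lesssim\|\rho\|_{\mathrm{H}^q_{\mathrm{mix}}(\Omega)}.
\]
In particular, $\Phi\in\mathrm{H}^q_{\mathrm{mix}}(\Omega)\cap\mathrm{L}^2_0(\Omega)$. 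In fact $\Phi$ gains regularity, but only the crude bound is needed.

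The main point requiring care is that the inverse Laplacian does \emph{not} gain two derivatives in each direction. This is because $|\bm{k}|_2^{-2}$ does not dominate $\prod_i |k_i|^{-2}$. So one cannot hope for $\mathrm{H}^{q+2}_{\mathrm{mix}}$, and the statement wisely claims only $\mathrm{H}^q_{\mathrm{mix}}$.

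If one needs $\Phi\in\mathrm{H}^{q+1}_{\mathrm{mix}}$-type regularity later (e.g. for \cref{lem:7}~\ref{lem:7.3} with $\Phi\in\mathrm{H}^2_{\mathrm{mix}}$ from $\rho$), one uses the additional bound
\[
|k_j|^2 \le |\bm{k}|_2^2 \quad \text{for each } j,
\]
which buys one extra order-two derivative in a single direction. For the stated lemma, the uniform bound of the multiplier by $L^2/(4\pi^2)$ suffices.

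Finally, one should check that the weak solution coincides with the Fourier series solution. Both lie in $V$ and satisfy $a(\Phi,v)=l(v)$ for all trigonometric polynomials $v$ with zero mean. These are dense in $V$, so uniqueness concludes the argument.
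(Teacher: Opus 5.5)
Your proof is correct and follows the paper's own argument: the Fourier expansion on the torus, the mixed-norm equivalence with weights $\prod_j(1+k_j^2)^q$, and the bound $\|\bm{k}\|_2^{-2}\le 1$ on nonzero modes; your closing density/uniqueness step, which identifies the Lax--Milgram solution with the Fourier series, makes explicit a detail the paper leaves implicit. Two side remarks are slightly off but do not affect the proof: with zero-mean test functions the variational problem needs no compatibility condition on $\hat\rho_{\bm{0}}$, and the bound $|k_j|^2\le\|\bm{k}\|_2^2$ gives a genuine $\mathrm{H}^{q+1}_{\mathrm{mix}}$ gain only for $d\le 2$ (cf.\ the paper's remark), not in general.
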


The second lemma provides a Galerkin identity for the expected potential. Recall that
the discrete linear form $l_{h,N}(\cdot,
\bm{X})$ is a random object through its dependence on the particle positions. We introduce
its deterministic counterpart $\bar{l}_h(\cdot, \bm{X})$, defined by replacing the random
density estimator with its expectation, and establish that the expected potential satisfies
the corresponding Galerkin equation.
\begin{lemma}
\label{lem:5}
Let the linear form $\bar{l}_{h}(\cdot,\bm{X})$ be defined by
\[
\bar{l}_{h}(v_h,\bm{X}) := \int_{\Omega}
\esp\big[\Pi_{h}^{(\star)}\rho_{N}(\bm{x},\bm{X})\big]\, v_h(\bm{x})\, d\bm{x},
\quad \forall v_h \in V_{h,0}^{(\star)}(\Omega).
\]
Then the expected potential $\esp[\Phi_{h,N}^{(\star)}(\cdot,\bm{X})]$ satisfies the
following Galerkin equation:
\[
a\!\left(\esp\big[\Phi_{h,N}^{(\star)}(\cdot,\bm{X})\big], v_h\right)
= \bar{l}_{h}(v_h,\bm{X}),
\quad \forall v_h \in V_{h,0}^{(\star)}(\Omega).
\]
\end{lemma}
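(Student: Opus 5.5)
The plan is to exploit the fact that every map between the particle positions and the discrete potential is linear and deterministic, so the expectation commutes with the discrete solve. Fix a basis of $V_{h,0}^{(\star)}(\Omega)$, for instance a basis of the finite-dimensional space obtained by removing the mean from the hierarchical B-spline functions. Write the discrete problem \cref{eq:5} in algebraic form $\mathbb{K}_0\bm{\beta}(\bm{X}) = \bm{f}(\bm{X})$. Here $\mathbb{K}_0$ is the stiffness matrix restricted to $V_{h,0}^{(\star)}(\Omega)$. It is symmetric positive definite, hence invertible, because $a$ is coercive on $V$ by the Poincar\'e--Wirtinger inequality and $V_{h,0}^{(\star)}(\Omega)\subset V$ is conforming. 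It is deterministic, since it does not depend on the particles. The right-hand side has entries $f_m(\bm{X}) = l_{h,N}(\psi_m,\bm{X}) = \sum_{s} w_s\,\psi_m(\bm{x}_s)$, as in \cref{eq:rhs_assembly}. The first step is to check integrability: each $\psi_m$ is a bounded function (a B-spline combination), so $f_m(\bm{X})$ is a bounded random variable and $\esp[\bm{f}]$ exists. Consequently $\bm{\beta}(\bm{X}) = \mathbb{K}_0^{-1}\bm{f}(\bm{X})$ is integrable.

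The second step is to take expectations. By linearity of $\esp$ and determinism of $\mathbb{K}_0^{-1}$, we get $\esp[\bm{\beta}] = \mathbb{K}_0^{-1}\esp[\bm{f}]$. Since the basis functions are deterministic, $\esp[\Phi_{h,N}^{(\star)}(\cdot,\bm{X})] = \sum_m \esp[\beta_m]\psi_m$ belongs to $V_{h,0}^{(\star)}(\Omega)$. Moreover, for every basis function $\psi_m$,
\[
a\big(\esp[\Phi_{h,N}^{(\star)}],\psi_m\big) = \esp\big[a(\Phi_{h,N}^{(\star)},\psi_m)\big] = \esp\big[l_{h,N}(\psi_m,\bm{X})\big].
\]
By linearity in $v_h$ this extends to all $v_h\in V_{h,0}^{(\star)}(\Omega)$.

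The third step is to identify $\esp[l_{h,N}(v_h,\bm{X})]$ with $\bar{l}_h(v_h,\bm{X})$. By definition, $l_{h,N}(v_h) = \int_\Omega \Pi_h^{(\star)}\rho_N\, v_h\,d\bm{x}$. So we need to exchange $\esp$ with the spatial integral, that is, apply Fubini to $\esp\big[\int_\Omega \Pi_h^{(\star)}\rho_N(\bm{x},\bm{X})v_h(\bm{x})\,d\bm{x}\big]$. To justify this, expand $\Pi_h^{(\star)}\rho_N = \sum_s w_s \Pi_h^{(\star)}\delta(\cdot-\bm{x}_s)$. Each $\Pi_h^{(\star)}\delta(\cdot-\bm{\xi})$ is obtained from the Gram matrix $\mathbb{G}$ of $V_h^{(\star)}$ applied to the vector $(\varphi_{h_{\bm{\ell}},\bm{j}}^p(\bm{\xi}))$. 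Its coefficients are therefore bounded uniformly in $\bm{\xi}$ by $\|\mathbb{G}^{-1}\|$ times a constant, using $0\le\varphi\le1$. Hence $|\Pi_h^{(\star)}\rho_N(\bm{x},\bm{X})\,v_h(\bm{x})|$ is bounded on $\Omega\times\Omega^N$, which has finite measure, and Fubini applies. This gives $\esp[l_{h,N}(v_h,\bm{X})] = \int_\Omega \esp[\Pi_h^{(\star)}\rho_N(\bm{x},\bm{X})]\,v_h(\bm{x})\,d\bm{x} = \bar{l}_h(v_h,\bm{X})$, which is the claim.

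The main obstacle, mild as it is, is this measure-theoretic justification. The object $\rho_N$ is a distribution, so the interchange must be carried out after projection, where everything is a bounded function of $(\bm{x},\bm{X})$. A secondary point is the zero-mean constraint. The right-hand side should be tested only against $V_{h,0}^{(\star)}$ functions, so no compatibility condition on $\esp[\Pi_h^{(\star)}\rho_N]$ is needed. Alternatively, I would note that $\esp[\Pi_h^{(\star)}\rho_N] = \Pi_h^{(\star)}\rho$ by \cref{lemma:Density:Estimator:Expected:Value} and the same Fubini argument; this is the form later used with \cref{lem:6} and \cref{lem:7}, \cref{lem:7.3} to prove \cref{thm:0}, \cref{thm:1:1}.
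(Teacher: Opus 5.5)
Your proof is correct and follows the same route as the paper's. The discrete problem holds for every particle configuration, and one takes expectations, commuting $\esp$ first with $a(\cdot,\cdot)$ and then with the spatial integral in $l_{h,N}$ via Fubini. Your coefficient-level argument through the deterministic matrix $\mathbb{K}_0^{-1}$ is a cleaner justification of the step $\bm{\nabla}\esp[\Phi_{h,N}^{(\star)}]=\esp[\bm{\nabla}\Phi_{h,N}^{(\star)}]$, which the paper uses without comment. Your uniform bound on $\Pi_h^{(\star)}\rho_N(\bm{x},\bm{X})\,v_h(\bm{x})$ over $\Omega\times\Omega^N$ also supplies the integrability, on the correct $N$-particle product space, that the paper's single-variable Fubini computation leaves implicit.
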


\begin{proof}[Proof of \cref{lem:5}]
By definition of the bilinear form, we have
\begin{align*}
a\left(\esp\big[\Phi_{h,N}^{(\star)}(\cdot,\bm{X})\big],v_h\right) %&= \int_{\Omega} \bm{\nabla}\left(\esp\big[\Phi_{h,N}^{(\star)}(\bm{x},\bm{X})\big]\right) \cdot \bm{\nabla}v_h(\bm{x}) \, d\bm{x} \\
&= \int_{\Omega}\bm{\nabla}\left(\int_{\Omega}\Phi_{h,N}^{(\star)}(\bm{x},\bm{\xi})p(\bm{\xi}) \,d\bm{\xi}\right) \cdot \bm{\nabla}v_h(\bm{x})\, d\bm{x}.
\end{align*}
  Applying Fubini's theorem, we obtain
\begin{align*}
a\left(\esp\big[\Phi_{h,N}^{(\star)}(\cdot,\bm{X})\big],v_h\right)&= \int_{\Omega}\left(\int_{\Omega}  \bm{\nabla}\Phi_{h,N}^{(\star)}(\bm{x},\bm{\xi})\cdot \bm{\nabla}v_h(\bm{x}) \, d\bm{x}\right)  p(\bm{\xi}) \,d\bm{\xi} \\
&= \int_{\Omega}  l_{h,N}(v_h,\bm{\xi}) p(\bm{\xi})\,d\bm{\xi} = \bar{l}_{h}(v_h,\bm{X}).
\end{align*}
\end{proof}

\begin{proof}[Proof of \cref{lem:6}] The proof relies on the characterization of the mixed Sobolev spaces on the periodic torus $\Omega = [0,L]^d/\mathbb{Z}^d$ in terms of Fourier coefficients.

The charge of particles and the electric potential are assumed to be of zero mean: $\Phi\in\mathrm{L}^2_0(\Omega)$ and 
$(1-\rho) \in\mathrm{L}^2_0(\Omega)$ so that both expand into Fourier series over the
nonzero modes only:
\[
1-\rho(\bm{x}) = -\!\!\sum_{\bm{k}\in\mathbb{Z}^d\setminus\{\bm{0}\}}\!\!
\hat{\rho}(\bm{k})\,{e}^{\bm{k}}(\bm{x}),
\qquad
\Phi(\bm{x}) = \!\!\sum_{\bm{k}\in\mathbb{Z}^d\setminus\{\bm{0}\}}\!\!
\hat{\Phi}(\bm{k})\,{e}^{\bm{k}}(\bm{x}),\qquad {e}^{\bm{k}}(\bm{x})={\exp}\left(\,i\frac{2\pi}{L}\bm{k}\cdot\bm{x}\right).
\]
%where we used $\hat{1}(\bm{k})=0$ for $\bm{k}\neq\bm{0}$. 
Since the Fourier basis functions ${e}^{\bm{k}}$ are eigenfunctions of the Laplacian, Poisson equation $-\Delta \Phi =1-\rho$ yields,
 identifying the coefficients mode by mode:
\begin{equation}
\label{lem:6:eq:fourier_relation}
\hat{\Phi}(\bm{k})
= -\left(\frac{L}{2\pi}\right)^2\frac{\hat{\rho}(\bm{k})}{\|\bm{k}\|_2^2},
\qquad \bm{k}\in\mathbb{Z}^d\setminus\{\bm{0}\}, \qquad \|\bm{k}\|_2^2=\sum_{j=1}^d k_j^2.
\end{equation}

For $f\in \mathrm{H}^q_{\mathrm{mix}}$ and $q\in\mathbb{N}$, we use the norm equivalence
\begin{equation}
\label{lem:6:eq:norm_equiv}
\|f\|_{\mathrm{H}^q_{\mathrm{mix}}(\Omega)}^2
\sim \sum_{\bm{k}\in\mathbb{Z}^d}
\Big(\prod_{j=1}^d(1+k_j^2)^q\Big)\,|\hat{f}(\bm{k})|^2,
\end{equation}
with constants depending only on $q$, $d$, and $L$. Substituting \cref{lem:6:eq:fourier_relation} into \cref{lem:6:eq:norm_equiv} applied to $\Phi$ yields
\begin{equation}
\label{lem:6:eq:substituted}
\|\Phi\|_{\mathrm{H}^q_{\mathrm{mix}}(\Omega)}^2
\lesssim
\sum_{\bm{k}\in\mathbb{Z}^d\setminus\{\bm{0}\}}
\,
\mathcal{A}(\bm{k}) \, |\hat{\rho}(\bm{k})|^2 , \qquad \mathcal{A}(\bm{k}) = 
\frac{\prod_{j=1}^d(1+k_j^2)^q}{\|\bm{k}\|_2^4}.
\end{equation}

For any nonzero frequency vector $\bm{k}\in\mathbb{Z}^d\setminus\{\bm{0}\}$, at least one component $k_m$, $1\leq m\leq d$ is a nonzero integer, hence satisfies $|k_m|\geq 1$. This provides the uniform lower bound
\[
\|\bm{k}\|_2^2 = \sum_{j=1}^d k_j^2 \geq k_m^2 \geq 1,
\qquad\text{hence}\qquad
\frac{1}{\|\bm{k}\|_2^4}\leq 1.
\]
Inserting this bound into \cref{lem:6:eq:substituted} and applying \cref{lem:6:eq:norm_equiv} to $\rho$,
\[
\|\Phi\|_{\mathrm{H}^q_{\mathrm{mix}}(\Omega)}^2
\lesssim
\sum_{\bm{k}\in\mathbb{Z}^d\setminus\{\bm{0}\}}
\Big(\prod_{j=1}^d(1+k_j^2)^q\Big)\,|\hat{\rho}(\bm{k})|^2
\lesssim \|\rho\|_{\mathrm{H}^q_{\mathrm{mix}}(\Omega)}^2;
\]
which completes the proof.
\end{proof}

\begin{remark}[Gain of one mixed order in dimension $d=2$]
\label{rem:lem6:d2gain}
In dimension $d=2$, the stability estimate of \cref{lem:6} can be
improved by one full order in the mixed scale: if
$\rho \in \mathrm{H}^q_{\mathrm{mix}}(\Omega)$, then
$\Phi \in \mathrm{H}^{q+1}_{\mathrm{mix}}(\Omega) \cap
\mathrm{L}^2_0(\Omega)$, with
\[
\|\Phi\|_{\mathrm{H}^{q+1}_{\mathrm{mix}}(\Omega)}
\lesssim \|\rho\|_{\mathrm{H}^q_{\mathrm{mix}}(\Omega)} .
\]
Indeed, in view of \cref{lem:6:eq:substituted}, it suffices to bound the multiplier
\[
\mathcal{A}(\bm{k})
= \frac{(1+k_1^2)(1+k_2^2)}{(k_1^2+k_2^2)^2},
\qquad \bm{k}\in\mathbb{Z}^2\setminus\{\bm{0}\}.
\]
If both components are nonzero, then $1+k_j^2 \leq 2k_j^2$ and the
arithmetic--geometric mean inequality
$k_1^2 k_2^2 \leq \tfrac14 (k_1^2+k_2^2)^2$ give $\mathcal{A}(\bm{k}) \leq 1$;
if a single component is nonzero, say $k_1 = n \neq 0$ and $k_2 = 0$,
then $\mathcal{A}(\bm{k}) = (1+n^2)/n^4 \leq 2$. Hence $\mathcal{A}$ is uniformly bounded
and the estimate follows from Parseval's identity.

This gain is specific to dimension two, where the derivative count is
exactly balanced: one additional mixed order costs two derivatives in
each direction, i.e.\ $2d = 4$ derivatives in total, which is precisely
what the symbol $\|\bm{k}\|_2^4$ of the squared Laplacian provides. For
$d \geq 3$, the requirement of $2d \geq 6$ derivatives exceeds the
available four, and the corresponding multiplier diverges as
$k^{2d-4}$ on the diagonal modes $k_1 = \dots = k_d = k$. The gain of two mixed orders fails
even for $d = 2$, where the multiplier behaves as $k^4/4$ on the
diagonal: the improvement is thus of exactly one order.
\end{remark}

%\begin{lemma}
%\label{lem:6}
%If $\rho \in \mathrm{H}^m_{\mathrm{mix}}$, with $m\in \mathbb{N}$, then the solution of \cref{thm:3:eq:2} verifies $\Phi\in \mathrm{H}^{m+2}_{\mathrm{mix}}\cap \mathrm{L}^2_0(\Omega)$ and 
%\[
%\|\Phi\|_{\mathrm{H}^{m+2}_{\mathrm{mix}}} \lesssim \|\rho\|_{ \mathrm{H}^m_{\mathrm{mix}}}.
%\]
%\end{lemma}
%\begin{proof}[Proof of \cref{lem:6}]
%The density and electric potential are periodic functions so that we can introduce their Fourier series, defined for all $\bm{x}\in \Omega$ by
%\[
%\rho(\bm{x}) = \sum_{\bm{k}\in \mathbb{Z}^d}\hat{\rho}_k \ex^{2\pi i \bm{k}\cdot \bm{x}}, \quad \Phi(\bm{x}) = \sum_{\bm{k}\in \mathbb{Z}^d/\{0\}}\hat{\Phi}_k \ex^{2\pi i \bm{k}\cdot \bm{x}},
%\]
%where, because these functions verify the Poisson equation, the coefficients are related according to 
%\[
%\hat{\Phi}_k = \frac{\hat{\rho}_k}{(2\pi)^2|\bm{k}|_1^2}, \quad \forall \bm{k}\in \mathbb{Z}^d/\{0\}.
%\]
%Using Parseval egality, and because $|k_j|^4\leq |\bm{k}|_1^4$ we have for $|\bm{\alpha}|_{\infty}\leq m+2$, 
%\begin{align*}
%\|D^{\bm{\alpha}}\Phi \|_{\mathrm{L}^2(\Omega)} &=  \sum_{\bm{k}\in \mathbb{Z}^d/\{0\}} |\hat{\Phi}_k|^2 \prod_{j=1}^d(2\pi |k_j|)^{2\alpha_j} \\
%&\lesssim \sum_{\bm{k}\in \mathbb{Z}^d/\{0\}} |\hat{\rho}_k|^2 \prod_{j=1}^d|k_j|^{2(\alpha_j-2)} \\
%& \lesssim \|D^{\bm{\beta}}\rho \|_{\mathrm{L}^2(\Omega)}, \quad \text{with}~~|\bm{\beta}|_\infty \leq m, \\
%&\lesssim \|\rho\|_{\mathrm{H}^m_{\mathrm{mix}}(\Omega)}.
%\end{align*}
%Summing this relation for all $\bm{\alpha}$ such that $|\bm{\alpha}|_{\infty}\leq m+2$, we obtain the result.
%\end{proof}

\begin{proof}[Proof of \cref{thm:0}, \cref{thm:1:1}]

% --- Setup ---
We set $V := \mathrm{H}^1(\Omega) \cap \mathrm{L}^2_0(\Omega)$, equipped with the $\mathrm{H}^1$ norm.

The proof relies on the introduction of the deterministic Galerkin solution
$\Phi_h \in V_{h,0}^{(\star)}(\Omega)$, defined as the solution of the
variational problem with exact source $\rho$:
\[
a(\Phi_h, v_h) = l(v_h), \quad \forall v_h \in V_{h,0}^{(\star)}(\Omega).
\]
This intermediate quantity allows us to decompose the bias of the electric
potential into two contributions of different nature via the triangular
inequality:
\[
\big\|\esp\big[\Phi_{h,N}^{(\star)}(\cdot,\bm{X})\big] - \Phi\big\|_V
\leq
\underbrace{\big\|\esp\big[\Phi_{h,N}^{(\star)}(\cdot,\bm{X})\big] -
\Phi_h\big\|_V}_{\displaystyle \mathcal{E}_{\mathrm{stoch}}}
+
\underbrace{\|\Phi_h - \Phi\|_V}_{\displaystyle \mathcal{E}_{\mathrm{disc}}}.
\]

The term $\mathcal{E}_{\mathrm{disc}}$ measures the error introduced by replacing the
continuous variational space $V$ with the discrete space
$V_{h,0}^{(\star)}(\Omega)$, while keeping the source term $\rho$ exact. Since
both $\Phi$ and $\Phi_h$ satisfy the variational problem with the same linear
form $l$, the Galerkin orthogonality holds exactly, and $\mathcal{E}_{\mathrm{disc}}$ is
controlled by a standard C\'ea argument:
\[
\mathcal{E}_{\mathrm{disc}} = \|\Phi_h - \Phi\|_{\mathrm{H}^1(\Omega)}
\lesssim \inf_{\varphi_h \in V_{h,0}^{(\star)}(\Omega)} \|\Phi - \varphi_h\|_{\mathrm{H}^1(\Omega)}
\lesssim h\|\Phi\|_{\mathrm{H}^2_{\mathrm{mix}}(\Omega)}
\lesssim h\|\rho\|_{\mathrm{H}^2_{\mathrm{mix}}(\Omega)},
\]
where the last two inequalities follow from \cref{lem:6} and the
approximation properties of $V_{h,0}^{(\star)}(\Omega)$.

The term $\mathcal{E}_{\mathrm{stoch}}$ measures the error introduced, within the
discrete space $V_{h,0}^{(\star)}(\Omega)$, by replacing the exact source
$\rho$ with the most probable value of its projected estimator $\esp[\Pi_h^{(\star)}\rho_N(\cdot,
\bm{X})]$. Both $\Phi_h$ and $\esp[\Phi_{h,N}^{(\star)}(\cdot,\bm{X})]$
live in the same discrete space and satisfy variational problems with the
same bilinear form $a$, so that $\mathcal{E}_{\mathrm{stoch}}$ can be bound thanks to coercivity arguments.

Setting $w_h := \esp[\Phi_{h,N}^{(\star)}(\cdot,\bm{X})] - \Phi_h \in V_{h,0}^{(\star)}(\Omega)$,
coercivity of $a$ gives $\|w_h\|_{\mathrm{H}^1(\Omega)}^2 \lesssim a(w_h, w_h)$. By \cref{lem:5},
$a(\esp[\Phi_{h,N}^{(\star)}(\cdot,\bm{X})], w_h) = \bar{l}_h(w_h, \bm{X})$,
and by definition of $\Phi_h$, $a(\Phi_h, w_h) = l(w_h)$. Hence
\[
a(w_h, w_h) = \bar{l}_h(w_h, \bm{X}) - l(w_h),
\]
and dividing by $\|w_h\|_{\mathrm{H}^1(\Omega)}$,
\[
\mathcal{E}_{\mathrm{stoch}} = \|w_h\|_{\mathrm{H}^1(\Omega)}
\lesssim \frac{|\bar{l}_h(w_h,\bm{X}) - l(w_h)|}{\|w_h\|_{\mathrm{H}^1(\Omega)}}
%\leq \sup_{v \in V} \frac{|\bar{l}_h(v,\bm{X}) - l(v)|}{\|v\|_{\mathrm{H}^1(\Omega)}}
%\leq \big\|\bias\big[\Pi_h^{(\star)}\rho_N\big]\big\|_{\mathrm{L}^2(\Omega)}.
\]
where, by Cauch-Schwartz inequality and the embedding $\left\|\cdot \right\|_{\mathrm{L}^2(\Omega)}\leq \left\|\cdot \right\|_{\mathrm{H}^1(\Omega)}$
\[
|\bar{l}_h(v,\bm{X}) - l(v)| =\left| \int_{\Omega}
\left(\esp\big[\Pi_{h}^{(\star)}\rho_{N}(\bm{x},\bm{X})\big] - \rho(\bm{x}) \right)\, v_h(\bm{x})\, d\bm{x} \right|\leq 
 \big\|\bias\big[\Pi_h^{(\star)}\rho_N\big]\big\|_{\mathrm{L}^2(\Omega)} \left\|w_h \right\|_{\mathrm{H}^1(\Omega)}
\]

Combining both bounds, we obtain
\[
\big\|\esp\big[\Phi_{h,N}^{(\star)}(\cdot,\bm{X})\big] - \Phi\big\|_{\mathrm{H}^1(\Omega)}
\lesssim h\|\rho\|_{\mathrm{H}^2_{\mathrm{mix}}(\Omega)}
+ \big\|\bias\big[\Pi_h^{(\star)}\rho_N\big]\big\|_{\mathrm{L}^2(\Omega)},
\]
which completes the proof.
\end{proof}

\subsection{Proof of \cref{thm:1:2}}
\begin{proof}[Proof of \cref{thm:0}, \cref{thm:1:2}]
The proof relies on the following bound, for the variance of the component $1\leq i \leq d$ of the electric field:
\begin{align*}
  \left\|\var\big[(\bm{\nabla}\Phi_{h,N}^{(\star)})_i(\cdot,\bm{X})\big]\right\|_{\mathrm{L}^1(\Omega)} &\leq \left\|\esp\big[\big(\bm{\nabla}\Phi_{h,N}^{(\star)}(\cdot,\bm{X})\big)_i^2\big]\right\|_{\mathrm{L}^1(\Omega)}\,,
\end{align*}
where, owing to Fubini's theorem
\begin{align*}
 \left\|\esp\big[(\bm{\nabla}\Phi_{h,N}^{(\star)}(\cdot,\bm{X}))_i^2\big]\right\|_{\mathrm{L}^1(\Omega)} =\int_{\Omega}\left(\int_{\Omega } \bm{\nabla}\Phi_{h,N}^{(\star)}(\bm{x},\bm{\xi}) \cdot  \bm{\nabla}\Phi_{h,N}^{(\star)}(\bm{x},\bm{\xi})\, d\bm{x}\right) p( d\bm{\xi}) \, d\bm{\xi}  \,.
\end{align*}
Now using the identity $a(\Phi_{h,N}^{(\star)},\Phi_{h,N}^{(\star)})= l_{h,N}\big( \Phi_{h,N}^{(\star)}\big)$, we obtain 
\begin{align*}
\left\|\var\big[(\bm{\nabla}\Phi_{h,N}^{(\star)})_i(\cdot,\bm{X})\big]\right\|_{\mathrm{L}^1(\Omega)}
&\leq \int_{\Omega}\left(\int_{\Omega } \Pi_{h}^{(\star)} \rho_N(\bm{x},\bm{\xi}) \Phi_{h,N}^{(\star)}(\bm{x},\bm{\xi})\, d\bm{x}\right) p( d\bm{\xi}) \, d\bm{\xi} \\
&\leq  \left\|\esp\big[\Pi_{h}^{(\star)}\delta (\bm{x} - \bm{X})\Phi_{h,N}^{(\star)}(\cdot,\bm{X}))\big]\right\|_{\mathrm{L}^1(\Omega)}.
\end{align*}
If $(\star)=(1)$, we conclude by applying \cref{lem:2}, \cref{lem:2:1} to  $\Phi_{h,N}^{(\star)}(\cdot,\bm{X})\in V_{h}^{(1)}(\Omega)$, and \cref{lem:2:2}, and \cref{lem:4}.  If $(\star)=(\infty)$, we use \cref{lem:2} with $\varphi_h(\cdot,\bm{X})\in V_{h}^{(\infty)}(\Omega)$ expressed as the combination of a unique component grid of mesh size $h=(h,\ldots,h)$.
\end{proof}

\section{Proof of \cref{prop:galerkin_projection}}
\label{apd:0}
\Fabrice{Since the basis functions $\varphi^p_{h_{\bm{\ell}},\bm{j}}$ of degree $p\geq 1$ belong to $\mathrm{H}^p_{\mathrm{mix}}(\Omega)$, we have $V_h^{(\star)}(\Omega)\subset \mathrm{H}^p_{\mathrm{mix}}(\Omega)$, so that the duality pairing on the right-hand side of \cref{eq:def:Galerkin:projection} is well-defined for all $v_h \in V_h^{(\star)}(\Omega)$.} 

\Fabrice{For any $u \in \mathrm{H}^{-p}_{\mathrm{mix}}(\Omega)$ let $\mathcal{L}_u : V_{h}^{(\star)}(\Omega) \to \mathbb{R}$  be the linear functional defined by $\mathcal{L}_u(v_h) := \langle u, v_h \rangle_{\mathrm{H}^{-p}_{\mathrm{mix}}, \mathrm{H}^p_{\mathrm{mix}}}$. By definition of the dual norm, $\mathcal{L}_u$ is continuous with respect to $\|\cdot\|_{\mathrm{H}^p_{\mathrm{mix}}(\Omega)}$ on $V_h^{(\star)}(\Omega)$.}

\Fabrice{Since $V_h^{(\star)}(\Omega)$ is finite-dimensional, all norms on it are equivalent, so that $\mathcal{L}_u$ is also continuous with respect to $\|\cdot\|_{\mathrm{L}^2(\Omega)}$ on $V_h^{(\star)}(\Omega)$. The latter, equipped with the $\mathrm{L}^2$ inner product, is a Hilbert space. 
By the Riesz representation theorem, there exists a unique element $\Pi_h^{(\star)}u \in V_h^{(\star)}(\Omega)$
such that $\mathcal{L}_u(v_h) = (\Pi_h^{(\star)}u, v_h)_{\mathrm{L}^2
(\Omega)}$ for all $v_h \in V_h^{(\star)}(\Omega)$, which concludes the proof.}

\bibliography{bib/bib}

\begin{thebibliography}{10}

\bibitem{aydemir94}
A.~Y. Aydemir.
\newblock A unified monte carlo interpretation of particle simulations and
  applications to non--neutral plasmas.
\newblock {\em Physics of Plasmas}, 1(4):822--831, 1994.

\bibitem{petsc-user-ref}
S.~Balay, S.~Abhyankar, M.F. Adams, S.~Benson, J.~Brown, P.~Brune,
  K.~Buschelman, E.~Constantinescu, L.~Dalcin, A.~Dener, V.~Eijkhout,
  J.~Faibussowitsch, W.D. Gropp, V.~Hapla, T.~Isaac, P.~Jolivet, D.~Karpeev,
  D.~Kaushik, M.G. Knepley, F.~Kong, S.~Kruger, D.A. May, L.C. McInnes, R.T.
  Mills, L.~Mitchell, T.~Munson, J.E. Roman, K.~Rupp, P.~Sanan, Sarich J., B.F.
  Smith, S.~Zampini, H.~Zhang, and J.~Zhang.
\newblock {PETSc/TAO} users manual.
\newblock Technical Report ANL-21/39 - Revision 3.21, Argonne National
  Laboratory, 2024.

\bibitem{bellman61}
R.E. Bellman.
\newblock {\em Adaptive Control Processes: A Guided Tour}.
\newblock Princeton University Press, Princeton, 1961.

\bibitem{birdsall18}
C.K. Birdsall and A.B Langdon.
\newblock {\em Plasma Physics via Computer Simulation}.
\newblock CRC Press, 2018.

\bibitem{bokanowski13}
O.~Bokanowski, J.~Garcke, M.~Griebel, and I.~Klompmaker.
\newblock An adaptive sparse grid semi-lagrangian scheme for first order
  {Hamilton}-{Jacobi} {Bellman} equations.
\newblock {\em Journal of Scientific Computing}, 55(3):575--605, 2013.

\bibitem{bottino15}
A.~Bottino and E.~Sonnendrücker.
\newblock Monte carlo particle-in-cell methods for the simulation of the
  {Vlasov}--{Maxwel}l gyrokinetic equations.
\newblock {\em Journal of Plasma Physics}, 81(5):435810501, 2015.

\bibitem{bungartz91}
H.-J. Bungartz.
\newblock An adaptive {Poisson} solver using hierarchical bases and sparse
  grids.
\newblock {\em Forschungsberichte, TU Munich}, TUM I 9130:1--21, 1991.

\bibitem{bungartz98}
H.-J. Bungartz and T.~Dornseifer.
\newblock Sparse grids: Recent developments for elliptic partial differential
  equations.
\newblock In {\em Multigrid Methods V}, pages 45--70. Springer Berlin
  Heidelberg, 1998.

\bibitem{bungartz_note_1999}
H.-J. Bungartz and M.~Griebel.
\newblock A note on the complexity of solving {Poisson}'s equation for spaces
  of bounded mixed derivatives.
\newblock {\em Journal of Complexity}, 15(2):167--199, 1999.

\bibitem{bungartz04}
H.-J. Bungartz and M.~Griebel.
\newblock Sparse grids.
\newblock {\em Acta Numerica}, 13:147--269, 2004.

\bibitem{crestetto18}
A.~Crestetto, N.~Crouseilles, and M.~Lemou.
\newblock {A particle micro-macro decomposition based numerical scheme for
  collisional kinetic equations in the diffusion scaling}.
\newblock {\em {Communications in Mathematical Sciences}}, 16(4):887--911,
  2018.

\bibitem{deluzet22}
F.~Deluzet, G.~Fubiani, L.~Garrigues, C.~Guillet, and J.~Narski.
\newblock Sparse grid reconstructions for particle-in-cell methods.
\newblock {\em ESAIM: M2AN}, 56:1809--1841, 2022.

\bibitem{deluzet22-1}
F.~Deluzet, G.~Fubiani, L.~Garrigues, C.~Guillet, and J.~Narski.
\newblock Efficient parallelization for 3d-3v sparse grid particle-in-cell:
  Shared memory architectures.
\newblock {\em Journal of Computational Physics}, 480:112022, 2023.

\bibitem{deluzet23}
F.~Deluzet, G.~Fubiani, L.~Garrigues, C.~Guillet, and J.~Narski.
\newblock Efficient parallelization for 3d-3v sparse grid particle-in-cell:
  Single {GPU} architectures.
\newblock {\em Computer Physics Communications}, 289:108755, 2023.

\bibitem{deluzet25}
F.~Deluzet, C.~Guillet, J.~Narski, and P.~Pace.
\newblock High-order sparse-{PIC} methods: Analysis and numerical
  investigations.
\newblock {\em SIAM Journal on Numerical Analysis}, 63(3):1281--1314, 2025.

\bibitem{denton95}
R.E. Denton and M.~Kotschenreuther.
\newblock {$\delta$f} algorithm.
\newblock {\em Journal of Computational Physics}, 119(2):283--294, 1995.

\bibitem{Dornseifer:1996aa}
T.~Dornseifer and C.~Pflaum.
\newblock Discretization of elliptic differential equations on curvilinear
  bounded domains with sparse grids.
\newblock {\em Computing}, 56(3):197--213, 1996.

\bibitem{driscoll90}
C.~F. Driscoll and K.~S. Fine.
\newblock Experiments on vortex dynamics in pure electron plasmas.
\newblock {\em Physics of Fluids B: Plasma Physics}, 2(6):1359--1366, 1990.

\bibitem{garrigues24}
L.~Garrigues, M.~Chung-To-Sang, G.~Fubiani, C.~Guillet, F.~Deluzet, and
  J.~Narski.
\newblock Acceleration of particle-in-cell simulations using sparse grid
  algorithms. {I.} {Application} to dual frequency capacitive discharges.
\newblock {\em Physics of Plasmas}, 31(7):073907, 2024.

\bibitem{garrigues24-1}
L.~Garrigues, M.~Chung-To-Sang, G.~Fubiani, C.~Guillet, F.~Deluzet, and
  J.~Narski.
\newblock Acceleration of particle-in-cell simulations using sparse grid
  algorithms. {II.} {Application} to partially magnetized low temperature
  plasmas.
\newblock {\em Physics of Plasmas}, 31(7):073908, 2024.

\bibitem{garrigues21}
L.~Garrigues, B.~Tezenas~du Montcel, G.~Fubiani, F.~Bertomeu, F.~Deluzet, and
  J.~Narski.
\newblock Application of sparse grid combination techniques to low temperature
  plasmas particle-in-cell simulations. {I.} {Capacitively} coupled radio
  frequency discharges.
\newblock {\em Journal of Applied Physics}, 129(15):153303, 2021.

\bibitem{garrigues21-1}
L.~Garrigues, B.~Tezenas~du Montcel, G.~Fubiani, and B.~C.~G. Reman.
\newblock Application of sparse grid combination techniques to low temperature
  plasmas particle-in-cell simulations. {II.} {Electron} drift instability in a
  hall thruster.
\newblock {\em Journal of Applied Physics}, 129(15):153304, 2021.

\bibitem{gassama07}
S.~Gassama, E.~Sonnendrücker, K.~Schneider, M.~Farge, and M.~O. Domingues.
\newblock Wavelet denoising for postprocessing of a 2d particle-in-cell code.
\newblock {\em ESAIM: Proc.}, 16:195--210, 2007.

\bibitem{griebel07}
M.~Griebel and J.~Hamaekers.
\newblock Sparse grids for the {Schr{\"o}dinger} equation.
\newblock {\em ESAIM: M2AN}, 41(2):215--247, 2007.

\bibitem{griebel07-1}
M.~Griebel and D.~Oeltz.
\newblock A sparse grid space-time discretization scheme for parabolic
  problems.
\newblock {\em Computing}, 81(1):1--34, 2007.

\bibitem{griebel98-2}
M.~Griebel, P.~Oswald, and T.~Schiekofer.
\newblock Sparse grids for boundary integral equations.
\newblock {\em Numerische Mathematik}, 83, 1998.

\bibitem{griebel90}
M.~Griebel, M.~Schneider, and C.~Zenger.
\newblock A combination technique for the solution of sparse grid problems.
\newblock {\em Forschungsberichte, TU Munich}, TUM I 9038:1--24, 1990.

\bibitem{griebel99}
M.~Griebel and G.~Zumbusch.
\newblock Adaptive sparse grids for hyperbolic conservation laws.
\newblock In {\em Hyperbolic Problems: Theory, Numerics, Applications}, pages
  411--422, Basel, 1999. Birkh{\"a}user Basel.

\bibitem{guillet24}
C.~Guillet.
\newblock Semi-implicit particle-in-cell methods embedding sparse grid
  reconstructions.
\newblock {\em Multiscale Modeling and Simulation}, 22:891--924, 2024.

\bibitem{guillet25}
C.~Guillet.
\newblock Energy-conserving particle-in-cell scheme based on {Galerkin} methods
  with sparse grids.
\newblock {\em Journal of Computational Physics}, 524:113739, 2025.

\bibitem{guillet25-1}
C.~Guillet.
\newblock Error estimates for sparse tensor products of {B}-spline
  approximation spaces.
\newblock {\em ESAIM: M2AN}, 2026.

\bibitem{hockney88}
R.W. Hockney and J.W. Eastwood.
\newblock {\em Computer Simulation Using Particles}.
\newblock Advanced book program. McGraw-Hill International Book Company, 1981.

\bibitem{huang23}
C.-K. Huang, Y.~Zeng, Y.~Wang, M.D. Meyers, S.~Yi, and B.J. Albright.
\newblock Finite grid instability and spectral fidelity of the electrostatic
  particle-in-cell algorithm.
\newblock {\em Computer Physics Communications}, 207:123--135, 2016.

\bibitem{langdon70}
A.B. Langdon.
\newblock Effects of the spatial grid in simulation plasmas.
\newblock {\em Journal of Computational Physics}, 6(2):247--267, 1970.

\bibitem{muralikrishnan21}
S.~Muralikrishnan, A.J. Cerfon, M.~Frey, L.F. Ricketson, and A.~Adelmann.
\newblock Sparse grid-based adaptive noise reduction strategy for
  particle-in-cell schemes.
\newblock {\em Journal of Computational Physics}, 11:100094, 2021.

\bibitem{obersteiner21-1}
M.~Obersteiner and H.-J. Bungartz.
\newblock A generalized spatially adaptive sparse grid combination technique
  with dimension-wise refinement.
\newblock {\em SIAM Journal on Scientific Computing}, 43(4):A2381--A2403, 2021.

\bibitem{obersteiner21}
M.~Obersteiner and H.-J. Bungartz.
\newblock A spatially adaptive sparse grid combination technique for numerical
  quadrature.
\newblock In {\em Sparse Grids and Applications - Munich 2018}, pages 161--185.
  Springer International Publishing, 2021.

\bibitem{pfluger10}
D.~Pflüger, B.~Peherstorfer, and H.-J. Bungartz.
\newblock Spatially adaptive sparse grids for high-dimensional data-driven
  problems.
\newblock {\em Journal of Complexity}, 26(5):508--522, 2010.

\bibitem{pinkus85}
A.~Pinkus.
\newblock {\em $n$-Widths in Approximation Theory}.
\newblock Springer Berlin, Heidelberg, 1985.

\bibitem{ricketson17}
L.F. Ricketson and A.J. Cerfon.
\newblock Sparse grid techniques for particle-in-cell schemes.
\newblock {\em Plasma Phys. Control. Fusion}, 59(2):024002, 2017.

\bibitem{sande19}
E.~Sande, C.~Manni, and H.~Speleers.
\newblock Sharp error estimates for spline approximation: Explicit constants,
  n-widths, and eigenfunction convergence.
\newblock {\em Mathematical Models and Methods in Applied Sciences},
  29(06):1175--1205, 2019.

\bibitem{schumaker07}
L.~Schumaker.
\newblock {\em Spline Functions: Basic Theory}.
\newblock Cambridge Mathematical Library. Cambridge University Press, 3
  edition, 2007.

\bibitem{smolyak63}
S.A. Smolyak.
\newblock Quadrature and interpolation formulas for tensor products of certain
  classes of functions.
\newblock {\em Soviet Math. Dokl.}, 4:240--243, 1963.

\bibitem{sydora99}
R.D. Sydora.
\newblock Low-noise electromagnetic and relativistic particle-in-cell plasma
  simulation models.
\newblock {\em Journal of Computational and Applied Mathematics},
  109(1):243--259, 1999.

\bibitem{takacs16}
S.~Takacs and T.~Takacs.
\newblock Approximation error estimates and inverse inequalities for
  {B}-splines of maximum smoothness.
\newblock {\em Mathematical Models and Methods in Applied Sciences},
  26:1411--1445, 2016.

\bibitem{tranquilli22}
P.~Tranquilli, L.~Ricketson, and L.~Chac{\'o}n.
\newblock A deterministic verification strategy for electrostatic
  particle-in-cell algorithms in arbitrary spatial dimensions using the method
  of manufactured solutions.
\newblock {\em Journal of Computational Physics}, 448:110751, 2022.

\bibitem{scipy20}
P.~Virtanen, R.~Gommers, T.E. Oliphant, M.~Haberland, T.~Reddy, D.~Cournapeau,
  E.~Burovski, P.~Peterson, W.~Weckesser, J.~Bright, S.~J. {van der Walt},
  M.~Brett, J.~Wilson, K.~J. Millman, N.~Mayorov, A.R.J. Nelson, E.~Jones,
  R.~Kern, E.~Larson, C.~J. Carey, I.~Polat, Y.~Feng, E.W. Moore,
  J.~{VanderPlas}, D.~Laxalde, J.~Perktold, R.~Cimrman, I.~Henriksen, E.~A.
  Quintero, C.R. Harris, A.M. Archibald, A.~H. Ribeiro, F.~Pedregosa, P.~{van
  Mulbregt}, and {SciPy 1.0 Contributors}.
\newblock {{SciPy} 1.0: Fundamental Algorithms for Scientific Computing in
  Python}.
\newblock {\em Nature Methods}, 17:261--272, 2020.

\end{thebibliography}
\bibliographystyle{plain}

\end{document}